\documentclass[11pt]{article}
\usepackage{amsfonts}
\usepackage{amssymb}
\usepackage{amsmath}
\usepackage{graphicx}
\usepackage{epsfig,epic,eepic}
\usepackage{color}
\usepackage[normalem]{ulem}
\usepackage{pinlabel}
\usepackage{amsxtra}
\usepackage{amsthm}
\setlength{\textwidth}{
6.5in}
\setlength{\oddsidemargin}{-.0 in}
\setlength{\textheight}{
9in}
\setlength{\topmargin}{-1.5cm}




\DeclareMathOperator*{\id}{id}
\definecolor{gre}{rgb}{0.2,0.5,0.2}
\definecolor{purple}{rgb}{.5,0,.5}

\newcommand{\R}{\mathbb R}
\newcommand{\N}{\mathbb N}

\newcommand{\Z}{\mathbb Z}

\newcommand{\F}{\mathcal F}
\newcommand{\LL}{\mathcal L}
\newcommand{\arcosh}{\text{arcosh}}

\newcommand{\hop}{\vskip .2cm\noindent}

\newcommand{\zkkk}{\Z/4k\Z}
\newcommand{\opna}{\operatorname}
\newcommand{\zkk}{\Z/2k\Z}
\newcommand{\zk}{\Z/k\Z}

\newcommand{\e}{\varepsilon}

\newtheorem{thm}
{Theorem}[section]

\newtheorem{cor}[thm]{Corollary}
\newtheorem{prop}[thm]{Proposition}
\newtheorem{lem}[thm]{Lemma}
\newtheorem{defi}[thm]{Definition}
\newtheorem{rema}[thm]{Remark}
\newtheorem{fact}[thm]{Fact}
\newtheorem{quest}[thm]{Question}

\title{On spacelike Zoll surfaces with symmetries}
\author{Pierre Mounoud  \\
	Univ. Bordeaux, IMB, UMR 5251\\ 
	F-33400 Talence, France  \\
	{\it email: pierre.mounoud@math.u-bordeaux1.fr}  \\
	\and 
	Stefan Suhr  \\
	Fachbereich Mathematik, Universit\"at Hamburg\\ 
	Bundesstra\ss e 55\\ 
	20146 Hamburg, Germany \\
	{\it email: stefan.suhr@math.uni-hamburg.de} \\
	}
	\date{}
\begin{document}
\maketitle
\begin{abstract}
Three explicit families of spacelike Zoll surface admitting a Killing field are provided. It allows to prove the existence of spacelike Zoll surface not smoothly conformal to a cover of de Sitter space as well as the existence of Lorentzian M\"obius strips of non constant curvature all  of whose spacelike geodesics are closed. Further the conformality problem for spacelike Zoll cylinders is studied.
\end{abstract}

\section{Introduction}
A spacelike Zoll surface is a Lorentzian surface all of whose spacelike geodesics are simply closed and have the same length. The basic example of a spacelike Zoll surface is de Sitter space, the homogeneous space  $\operatorname{SO}_0(2,1)/\operatorname{SO}_0(1,1)$, and its finite coverings since it is not simply connected. It can be understood as the Lorentzian analogue of the round sphere as it has constant positive curvature. 
In \cite{MounoudSuhr}, the authors proved that a spacelike Zoll surface is diffeomorphic to a cylinder or a M\"obius strip.  This purely topological classification leaves open the finer questions of a classification up to isometry or conformality.
Recall that the cylinder as well as the M\"obius strip admit uncountable many non equivalent conformal Lorentzian structures.

The purpose of this article is  twofold. First it provides three infinite dimensional  families of examples of spacelike Zoll surfaces, in order to test answers to the questions that arise in the study of these surfaces. Second it tries to initiate a study 
of the conformal properties of spacelike Zoll surfaces since this is the main difference, besides the topological one, to the Riemannian case.

In the Riemannian case several explicit families of  Zoll surfaces, i.e.\ surfaces all of whose geodesics are simply closed, are known. The most famous family is certainly the Zoll spheres of revolution, i.e.\ with a Killing vector field, classified by Zoll and Darboux (see \cite{Besse} chap.\ 4).  The first work in this direction  for spacelike Zoll surfaces has been done by Boucetta \cite{Bou}, who provided examples of spacelike Zoll cylinders of revolution, i.e.\ admitting a periodic spacelike Killing field. However, contrary to a Riemannian 2-sphere, Killing fields of Lorentzian cylinders are not  periodic in general.
Already, on de Sitter space, there exist three conjugacy classes of Killing fields: 
the elliptic, the parabolic and the hyperbolic one, each corresponding to a conjugacy class of $1$-dimensional subgroups of $\operatorname {SO}_0(2,1)$ acting on $\operatorname{SO}_0(2,1)/\operatorname{SO}_0(1,1)$. We have thus decided to investigate spacelike Zoll surfaces admitting a non trivial Killing field.

For general spacelike Zoll cylinders the dynamics and the causal character of a Killing vector field coincides with that of a Killing vector field on de Sitter space (see Proposition \ref{prop_atlases}). Thus there exist only three types: elliptic, parabolic and hyperbolic. Adding technical assumptions allows  to  obtain three families of spacelike Zoll metrics, called elliptic, parabolic and hyperbolic according to  the dynamic of the Killing field, see Theorems \ref{theo_parabolic}, \ref{theo_elliptic} and \ref{theo_hyperbolic}. These metrics are constructed as deformations of a covering of de Sitter space,  the deformation preserving a chosen Killing field $K$. This corresponds to the construction of Zoll surfaces of revolution in chapter 4 of \cite{Besse}. In the case of an elliptic Killing vector field the family gives a complete classification. 
When $K$ has lightlike orbits, the deformations are realized via atlases adapted to $K$ (see Definitions \ref{def_para}, \ref{def_hyp}). These atlases are inspired by ideas used in \cite{BavardMounoud}.
 At the moment the authors are not aware of an example of a spacelike Zoll surface with a Killing vector field that does not belong to one of these families, but conjecture that such metrics exist.

 Besides the classification problem for Zoll metrics there is the rigidity problem for Zoll projective planes proven by Green \cite{Green} and
recently extended by Pries \cite{Pries} to surfaces all of whose geodesics are closed. These notes present a new feature of spacelike Zoll surfaces, opposing the Riemannian case with the existence of non constant curvature metrics on the M\"obius strip all of whose spacelike geodesics are closed. The examples constructed are covered by
 smooth spacelike Zoll metrics with non constant curvature and parabolic or hyperbolic Killing vector fields invariant by antipody, see Corollaries \ref{coro_parabolic} and \ref{coro_mob_hyp}. So far it is not clear whether these metrics are spacelike Zoll on the M\"obius strip. It is interesting however to note that none of the three families contain real-analytic metrics invariant by antipody.


Dropping the assumption of a Killing vector field  two major results on Riemannian Zoll surfaces remain: One is the theorem by Green \cite{Green} and its recent extension in \cite{Pries} mentioned before. The other one  is the theorem of  Guillemin \cite{Guillemin} saying that the space of Zoll metrics on $S^2$ in the conformal class of the constant curvature metric $g_0$ is a manifold near $g_0$ and the tangent space at $g_0$ is precisely the space of odd functions on $S^2$. If Guillemin confined his study to the conformal  deformations of the round sphere it is because of the uniformization theorem. Note that an uniformization theorem does not exist for Lorentzain surfaces and there exists an infinite number of non isometric conformal classes of Lorentzian cylinders. So naturally the question appears which conformal classes of Lorentzian cylinders are represented by spacelike Zoll metrics.   


This paper contains essentially three results on the conformal class of a spacelike Zoll cylinder $(C,g)$. Without further assumption,  it is shown that a two-fold cover of $(C,g)$ conformally embeds into de Sitter, see Proposition \ref{P3}. In the presence of a Killing field Theorem \ref{thm_conf_killing} shows that $(C,g)$ is always $C^0$-conformal to a cover of de Sitter space, i.e.\ that there exists a homeomorphism exchanging the lightlike foliations. The $C^0$-conformal class is determined by looking at the reflexion of  lightlike curves on the conformal boundary. 
Finally, the first family of example, the parabolic one 
contains  metrics that are not smoothly conformal (actually not $C^2$-conformal) to a cover of de Sitter space, see Theorem \ref{theo_nonsmoothconformal}. As often in Lorentzian geometry, the question of determining the conformal classes is quite subtle. The conformal class being simply given by a pair of foliations two metrics may be $C^n$ but not $C^{n+1}$-conformal for any $0\leq n\leq \infty$. The authors conjecture that there exist spacelike Zoll metrics with a Killing field which are not $C^1$-conformal to a cover of de Sitter. An extended classification of parabolic spacelike Zoll cylinders could
yield such a result.

The paper is organized as follows:  section \ref{sec_Zoll} studies spacelike Zoll surface without assuming the presence of a Killing field;  section \ref{section_general} gives the description of spacelike Zoll cylinders admitting a Killing field;  section \ref{sec_conf} determines the $C^0$-conformal class of these metrics; sections \ref{section_parabolic}, \ref{section_elliptic} and \ref{section_hyperbolic} are devoted to the construction of the families of examples, finally  section \ref{sec_Blaschke}, following an idea of Blaschke,  explains how it is possible to  blend the preceding constructions in order to find examples that do not admit any Killing fields.

{\bf Acknowledgement} The first author wishes to thank J.F.\ Bony  for useful discussions and J.\ Lafontaine for pointing at him the question of the existence of  non constant curvature spacelike Zoll M\"obius strip.  The second author wishes to thank J.-L.\ Flores for explanations in connection with the causal boundary and conformal boundary of spacetimes.




\section{ General spacelike Zoll surfaces}\label{sec_Zoll}

\begin{prop}\label{P1}
Let $(C,g)$ be pseudo-Riemannian 
cylinder all of whose spacelike geodesics are closed. Then $(C,g)$ is globally hyperbolic 
and the universal cover of $(C,-g)$ is not globally hyperbolic.
\end{prop}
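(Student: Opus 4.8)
The plan is to deduce global hyperbolicity from the existence of a Cauchy hypersurface, the candidate being a single closed spacelike geodesic $S$. First I would record that $S$ must be noncontractible: if a closed spacelike geodesic bounded a disk $D\subset C$, then either lightlike line field, restricted to $\overline D$, would be nowhere vanishing and everywhere transverse to $\partial D=S$ (transversality because $S$ is spacelike, so null directions never lie along it). On the simply connected disk this line field lifts to a vector field, and a nowhere-zero field transverse to the boundary is forbidden by the Poincar\'e--Hopf index theorem, since its index sum would equal $\chi(D)=1\neq 0$. Hence $S$ is essential and separates $C$ into two half-cylinders $C^\pm$.

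Next I would prove that $S$ is a Cauchy surface. The transversality of the two null cones to $S$ provides a coherent co-orientation, so every causal curve crosses $S$ transversally and always in the same time direction; in particular $S$ is \emph{locally} acausal and any causal curve meets it in a monotone, discrete set. To upgrade this to ``every inextendible causal curve meets $S$ exactly once'' I would rule out trapped causal curves and Cauchy horizons: a future-inextendible causal curve confined to, say, $C^-$ would have to stay below some essential closed spacelike geodesic $S'$, yet it crosses the whole family of (closed, hence complete, noncontractible) spacelike geodesics transversally and upward, each crossing costing a definite amount of the associated time function, so it cannot accumulate at a finite-time barrier and must reach $S$. Acausality then follows globally, and a compact, acausal, edgeless hypersurface met once by every inextendible causal curve is Cauchy; thus $(C,g)$ is globally hyperbolic.

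For the second statement, observe that $g$ and $-g$ share the same Levi-Civita connection, so they have the same geodesics, while spacelike and timelike are exchanged; hence the closed spacelike geodesics of $g$ are closed \emph{timelike} geodesics of $-g$, and $S$ becomes a noncontractible closed timelike geodesic. Passing to the universal cover $(\R^2,-\tilde g)$, the lift $\tilde S$ is an inextendible timelike geodesic invariant under the deck generator $T$, hence $T$-periodic and confined to a slab transverse to the $S^1$-direction while running off to infinity along it. I would then argue by contradiction: were $(\R^2,-\tilde g)$ globally hyperbolic, a Cauchy temporal function $\tau$ would be strictly monotone and unbounded along $\tilde S$, yet in a globally hyperbolic spacetime the future $J^+(p)$ is bounded by the null geodesics emanating from $p$. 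These null generators are the \emph{same} null geodesics as for $g$ (nullity is insensitive to the sign of the metric), whose global behaviour is controlled by the first part; because they cannot keep pace in the transverse coordinate with the winding timelike curve $\tilde S$, the point $T^k(p)\in I^+(p)$ eventually lies outside the region bounded by the null generators of $p$ --- a contradiction. This is the two-dimensional incarnation of the anti--de Sitter phenomenon: conceptually, the developed image of the cover is a proper domain of $\R^{1,1}$ that is globally hyperbolic for $\tilde g$ but, being bounded in the $T$-invariant direction, fails to be so for $-\tilde g$.

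The routine ingredients are the index computation and the local transversality and co-orientation of the null cones. The main obstacle is the global control of causal curves at infinity: ruling out trapped causal curves and Cauchy horizons for $(C,g)$, and dually exhibiting the horizon obstructing $(\R^2,-\tilde g)$. Both hinge on the hypothesis that \emph{all} spacelike geodesics are closed --- hence complete and homotopically essential --- which is precisely what pins down the asymptotics of the lightlike foliations and makes the confinement/escape dichotomy rigorous.
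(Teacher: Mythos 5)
Both halves of your argument contain genuine gaps, and in the second half the key step is actually false, as the model example already shows.

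For global hyperbolicity, your skeleton (a closed spacelike geodesic $S$ is essential, separates $C$, and causal curves cross it at most once, coherently) is sound and close to the paper's strategy of exhibiting $S$ as a Cauchy hypersurface. The gap is the crucial step that every inextendible causal curve actually \emph{reaches} $S$. You dismiss a curve trapped in $C^-$ by saying it ``crosses the whole family of closed spacelike geodesics\dots each crossing costing a definite amount of the associated time function''. But there is no such time function: by Proposition \ref{P2}, any two distinct spacelike geodesics of a spacelike Zoll cylinder intersect each other (at least twice), so the closed spacelike geodesics are never pairwise disjoint, never form a foliation, and are not the level sets of any function; nothing in your argument forces a trapped causal curve to meet ``the whole family'' monotonically, or indeed to behave at all. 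The paper closes exactly this hole by a different device: connect $\gamma$ to the putative trapped curve $\eta$ by an arbitrary path, parallel-transport $\dot\gamma$ along it, and use the Zoll hypothesis to obtain a homotopy through \emph{closed} spacelike geodesics from $\gamma$ (disjoint from $\eta$) to a closed spacelike geodesic meeting the causal curve $\eta$ transversally; this contradicts the homotopy invariance of the mod $2$ intersection number.

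For the second claim, your intended contradiction --- ``$T^k(p)$ eventually lies outside the region bounded by the null generators of $p$'' --- fails already for the universal cover of de Sitter space. Take $-\widetilde{g}=-\cosh^2(t)\,d\varphi^2+dt^2$ on $\R^2$, $p=(0,0)$, $T(\varphi,t)=(\varphi+2\pi,t)$. The two null geodesics through $p$ are $\varphi=\pm\bigl(2\arctan(e^t)-\tfrac{\pi}{2}\bigr)$; they are indeed confined to $|\varphi|<\tfrac{\pi}{2}$, but the region they bound is $J^+(p)=\bigl\{\varphi\ge \bigl|2\arctan(e^t)-\tfrac{\pi}{2}\bigr|\bigr\}$, which contains the whole half-plane $\{\varphi\ge\tfrac{\pi}{2}\}$ and hence \emph{every} $T^k(p)$: a region bounded by curves of bounded $\varphi$-extent need not itself be bounded in $\varphi$. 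In this model the sets $J^{\pm}$ are closed and well behaved; global hyperbolicity fails only because causal diamonds such as $J^+(p)\cap J^-(T(p))$ are non-compact vertical strips. Hence any correct proof must exploit compactness of causal diamonds (equivalently, of spaces of causal curves), which your argument never invokes. That is exactly what the paper does: if $(\widetilde{C},-\widetilde{g})$ were globally hyperbolic, the space of causal curves from $p$ to $\phi(p)$ would be compact; since every future-pointing $(-\widetilde{g})$-timelike geodesic from $p$ passes through $\phi(p)$ (being the lift of a closed spacelike geodesic of $g$), the limit curve lemma forces \emph{both} null geodesics from $p$ to pass through $\phi(p)$ as well --- impossible, because they are leaves of the two transversal null foliations of the plane and can meet at most once.
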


\begin{lem}\label{lem_orient}
Let $(C,g)$ be a Lorentzian cylinder with at least one non timelike or non spacelike loop. Then $(C,g)$ is space- and time-orientable. 
\end{lem}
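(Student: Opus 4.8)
The plan is to reduce both orientability statements to the triviality of a single real line bundle and then use the given loop to produce a nonvanishing section of that bundle. First I would fix an auxiliary Riemannian metric $h$ on $C$ and diagonalize $g$ with respect to $h$: the field of $h$-selfadjoint endomorphisms $h^{-1}g$ has, at each point, one negative and one positive eigenvalue (because $g$ has signature $(-,+)$), and since these eigenvalues never collide the corresponding eigenlines assemble into two smooth line subbundles, a timelike one $L_-$ and a spacelike one $L_+$, with $TC=L_-\oplus L_+$ an $h$- and $g$-orthogonal splitting. By the standard characterization, $(C,g)$ is time-orientable exactly when there is a continuous timelike vector field, i.e.\ when $L_-$ is trivial, and space-orientable exactly when $L_+$ is trivial.

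The key reduction is that these two conditions are equivalent. Indeed the cylinder is orientable, so $0=w_1(TC)=w_1(L_-)+w_1(L_+)$ by the Whitney formula, whence $w_1(L_-)=w_1(L_+)$ in $H^1(C;\Z/2)\cong\Z/2$. Thus $L_-$ is trivial if and only if $L_+$ is, and it suffices to trivialize either one.

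To do this I would exploit the loop $\gamma$. Writing its velocity as $\dot\gamma=\dot\gamma_-+\dot\gamma_+$ along the splitting, orthogonality gives $g(\dot\gamma,\dot\gamma)=g(\dot\gamma_-,\dot\gamma_-)+g(\dot\gamma_+,\dot\gamma_+)$ with the first term $\leq 0$ and the second $\geq 0$. If $\gamma$ is non-spacelike, then $g(\dot\gamma,\dot\gamma)\leq 0$ forces $\dot\gamma_-\neq 0$ at every point (a regular loop has $\dot\gamma\neq 0$, and at null points both components are nonzero), so $\dot\gamma_-$ is a nonvanishing section of $L_-$ along $\gamma$; symmetrically, if $\gamma$ is non-timelike then $\dot\gamma_+$ is a nonvanishing section of $L_+$ along $\gamma$. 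A nonvanishing section over the curve shows that the restriction of the corresponding line bundle to $\gamma$ is trivial.

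Finally I would observe that $\gamma$ is non-contractible, so that this local triviality actually detects the global class; this is where the hypothesis really enters. The unoriented tangent line of a contractible simple closed curve winds nontrivially around $\mathbb{RP}^1$ (with winding number $\pm 2$, by the turning-number theorem applied on the disk it bounds), so it cannot stay inside the proper closed sub-arc of causal, resp.\ spacelike, directions; hence a non-spacelike or non-timelike loop is homotopically essential and $[\gamma]$ generates $H_1(C;\Z/2)$. Evaluating $w_1(L_\mp)$ on this generator then gives $0$, so $L_\mp$ is trivial over all of $C$, and by the reduction above both $L_-$ and $L_+$ are trivial, yielding the desired time- and space-orientability. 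I expect the main obstacle to be this last step: making sure the loop is essential so that the locally obvious section upgrades to a statement about the global Stiefel--Whitney class, and treating the null points of $\gamma$ uniformly with its spacelike and timelike points.
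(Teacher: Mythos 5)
Your argument takes a genuinely different route from the paper's. The paper normalizes the loop (simply closed, smooth, regular), lifts two lightlike geodesics to the universal cover, and derives a contradiction from the fact that a deck transformation would acquire a fixed point in the strip they bound; you instead split $TC=L_-\oplus L_+$ into the timelike and spacelike eigenline bundles of $h^{-1}g$, identify time- (resp.\ space-) orientability with triviality of $L_-$ (resp.\ $L_+$), use orientability of the cylinder to get $w_1(L_-)=w_1(L_+)$, and trivialize the relevant bundle along the loop via the decomposition of $\dot\gamma$. For a \emph{smooth, regular, simply closed} loop all of these steps are correct, including the turning-number argument showing such a loop cannot be contractible.

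The genuine gap is that the hypothesis provides only \emph{some} loop, while your final step uses simplicity essentially, and you never reduce to that case. The turning-number theorem does not apply to non-simple contractible curves (a figure eight has turning number $0$, so its tangent line can perfectly well avoid the timelike cones). More seriously, even granting essentiality, the inference ``$[\gamma]$ generates $H_1(C;\Z/2)$'' fails for non-simple loops: a loop running twice around a closed spacelike geodesic of de Sitter space is spacelike (hence non-timelike) and homotopically essential, yet its class in $H_1(C;\Z/2)\cong\Z/2$ is zero, so $\langle w_1(L_\pm),[\gamma]\rangle=0$ carries no information and the proof stalls exactly at the point you yourself flagged as the crux. (Essentiality alone can even be had for free, since simply connected Lorentzian surfaces admit no closed causal curves --- a fact the paper uses elsewhere --- but that does not rescue the homological step.) The missing ingredient is precisely the preliminary normalization the paper performs in its opening lines: since $C$ is a surface one may assume the loop is simply closed, and well-known arguments (the paper cites Penrose) allow one to assume it is smooth and regular --- regularity being something your step with $\dot\gamma\neq 0$ also silently requires. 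With that reduction added, your proof closes up and stands as a clean characteristic-class alternative to the paper's covering-space argument.
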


\begin{proof}
By exchanging $g$ with $-g$, if necessary, we can assume that the loop in the assumption is non timelike.
Further since $C$ is a surface we can assume that the loop is simply closed. Well-known
arguments in Lorentzian geometry (cp. \cite{penrose}) allow us to additionally assume that the loop is smooth and
regular.

Let $\gamma\colon[0,1]\to C$ be a simple closed, smooth and regular non timelike loop in $(C,g)$. Further let $v_l$ and $v_r$ be lightlike vectors at 
$\gamma(0)$ pointing to the same side of $\gamma$. If $\gamma$ itself is a lightlike pregeodesic, the subsequent argument will apply to the lightlike
direction not tangent to $\gamma$. Denote with $\eta_l$ and $\eta_r$ the geodesics with direction $v_l$ and $v_r$. Lift both to $\R^2$. Then not both lightlike 
geodesics can be invariant (even up to a finite quotient) under the group of deck transformations. So w.lo.g. we can assume that the lift of $\gamma$ lies on one 
side of $\widetilde\eta_l$ the lift of $\eta_l$. Now consider the strip bounded by $\widetilde\eta_l$ and its translate by the deck transformation $\alpha$ induced
by the fundamental class of $\gamma$. If $\dot\gamma(1)$ does not lie in the same connected
component of $\{w\in TC_{\gamma(0)}|\; g(w,w)\ge 0\}\smallsetminus \{0\}$ as $\dot\gamma(0)$, then
$\widetilde\gamma$ and $\alpha\circ \widetilde\gamma$ lie in the strip bounded by
$\widetilde\eta_l$ and $\alpha\circ\widetilde\eta_l$. But then $\alpha$ will have a fixed point in that
strip, which contradicts the assumption that $\alpha$ is a deck transformation.

Consequently $(C,g)$ is space-orientable. Together with the orientability of $C$ this implies the
time-orientability of $(C,g)$ as well.
\end{proof}

\begin{proof}[Proof of Proposition \ref{P1}.]
First we prove that $(C,g)$ is globally hyperbolic. W.l.o.g. we can assume that $(C,g)$ is spacelike Zoll. It is well known that global 
hyperbolicity is passed down to finite quotients. Let $\gamma$ be any spacelike geodesic of $(C,g)$. By assumption $\gamma$ is 
an embedded closed hypersurface. We claim that $\gamma$ is a Cauchy hypersurface in $(C,g)$. Let $\eta$ be an inextendable causal curve in $(C,g)$ 
that does not intersect $\gamma$. Choose any curve from a point on $\gamma$ to a point on $\eta$ and parallel transport the tangent vector $\dot\gamma$ 
along that curve. Denote the transported vector by $v$. Since $\eta$ is causal the spacelike geodesic with direction $v$ is transversal to $\eta$ 
(w.l.o.g. we can assume $\eta$ to be smooth.). This induces a smooth family of closed curves transversal to a given curve at one end and disjoint 
at the other end. This is of course impossible. Therefore $\eta$ intersects $\gamma$ and $\gamma$ is a Cauchy hypersurface. 

 Next we show that the universal cover $(\widetilde{C},-\widetilde{g})$ of $(C,-g)$ is not globally hyperbolic. Consider a deck transformation $\phi$ of $\widetilde{C}\to C$ and a 
point $p\in \widetilde{C}$. W.l.o.g. we can assume that $\phi(p)\in I^+(p)$,  relative to the time-orientable metric $\widetilde{g}$. Else consider $\phi^{-1}$ instead of $\phi$. Now if $(\widetilde{C},
-\widetilde{g})$ is globally hyperbolic then the space of causal arcs between $p$ and $\phi(p)$ is compact. By the limit curve lemma and the assumption 
that every future pointing $(-\widetilde{g})$-timelike geodesic from $p$ intersects $\phi(p)$ we can conclude that both lightlike geodesics emanating from $p$ intersect 
$\phi(p)$. Since they are curves belonging to different transversal foliations this is impossible.
\end{proof}

\begin{prop}\label{P2}
Let $(C,g)$ be a  Lorentzian cylinder all of whose spacelike geodesics are closed. Then any pair of spacelike 
geodesics intersects at least twice. The number of intersections is even and constant throughout the set of spacelike geodesics.
\end{prop}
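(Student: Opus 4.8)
The plan is to combine a parity argument with a continuity (rotation of directions) argument, using throughout that by Proposition \ref{P1} the surface $(C,g)$ is globally hyperbolic and that --- as established in its proof --- every spacelike geodesic is an embedded closed Cauchy hypersurface.

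First I would record two structural facts. Since a geodesic is determined by a point together with a direction, two \emph{distinct} spacelike geodesics can never be tangent; hence any two of them meet transversally, in particular in finitely many points. Moreover a Cauchy hypersurface of the cylinder is a non-contractible simple closed curve, so any spacelike geodesic represents the generator of $H_1(C;\Z/2)\cong\Z/2$. As the mod-$2$ self-intersection of this class vanishes on the open cylinder (the generator can be pushed off itself), the number of transverse intersection points of any two spacelike geodesics is \emph{even}. It then remains to exclude the value $0$, i.e.\ to prove that two spacelike geodesics always meet, and afterwards to prove that the count does not depend on the chosen pair.

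The heart of the argument, and the step I expect to be most delicate, is the existence of an intersection. Suppose $\gamma_1\cap\gamma_2=\emptyset$. Being disjoint essential loops they cobound an annulus, and since each is Cauchy we may assume $\gamma_2\subset I^+(\gamma_1)$ and hence $\gamma_1\subset I^-(\gamma_2)$. Fix $p\in\gamma_1$ and rotate the spacelike direction at $p$: the spacelike directions form a single arc bounded by the two null directions, and for each such direction $\theta$ we obtain a closed spacelike geodesic $c_\theta$ through $p$, with $c_{\theta_0}=\gamma_1$ for the direction of $\gamma_1$. Because $p\notin\gamma_2$, no $c_\theta$ can coincide with or be tangent to $\gamma_2$, so every $c_\theta$ meets $\gamma_2$ transversally and $\#(c_\theta\cap\gamma_2)$ is locally constant on the connected arc of spacelike directions. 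As $\theta$ tends to a null direction, $c_\theta$ converges on compact sets to the corresponding null geodesic from $p$; this null geodesic is an inextendible causal curve and therefore meets the Cauchy hypersurface $\gamma_2$, transversally and at finite parameter since $p\in I^-(\gamma_2)$. Transverse intersections persist under $C^1$-small perturbations, so $c_\theta$ meets $\gamma_2$ for $\theta$ near the null direction; by local constancy $\#(c_\theta\cap\gamma_2)\ge 1$ for \emph{all} $\theta$, in particular for $\gamma_1$, contradicting $\gamma_1\cap\gamma_2=\emptyset$. Hence $\gamma_1\cap\gamma_2\neq\emptyset$, and together with the parity statement the number of intersections is at least $2$. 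The subtle point is precisely the passage to the null limit: one must rule out that the crossing with $\gamma_2$ escapes to infinity, which is why I invoke the stability of the single transverse crossing coming from the null geodesic (reached at finite parameter) rather than a global convergence of the whole curve $c_\theta$.

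Finally, constancy follows from the same transversality input together with a connectedness statement. The set of spacelike geodesics is connected, since they are parametrised by a point of $C$ together with a spacelike direction, the latter ranging in a contractible arc; consequently the space of ordered pairs of \emph{distinct} spacelike geodesics is connected as well, removing the diagonal from the product of a connected manifold of dimension $\ge 2$ leaving a connected set. On this space the function counting intersection points is locally constant, because distinct spacelike geodesics always meet transversally and transverse intersections of a continuously varying compact family persist in number. A connected domain then forces the count to be globally constant, which completes the proof.
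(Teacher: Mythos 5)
Your parity argument is the same as the paper's, and your overall strategy (parity $+$ existence $+$ constancy via connectedness) is close in spirit to it, but both of your key steps rest on an unproved continuity assumption which is exactly the hard input the paper imports from \cite{MounoudSuhr}. In the existence step you assert that $\theta\mapsto \#(c_\theta\cap\gamma_2)$ is \emph{locally constant} on the arc of spacelike directions, and in the constancy step you assert that ``transverse intersections of a continuously varying compact family persist in number.'' Transversality only gives lower semicontinuity of such counts: each transverse crossing persists under perturbation. The reverse inequality --- no intersection points appearing in the limit --- requires that the closed geodesics $c_\theta$ vary continuously \emph{as compact subsets of} $C$, i.e.\ that their periods (lengths) be locally bounded in $\theta$. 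Under the bare hypothesis ``all spacelike geodesics are closed'' this is not automatic: for a flow all of whose orbits are periodic, the period function need not be locally bounded (this is the issue behind the counterexamples to the periodic orbit conjecture), and a closed geodesic whose initial direction is close to that of $\gamma_1$ could a priori shadow $\gamma_1$ for a long parameter interval, then wander across $\gamma_2$, and only close up much later. The same possibility destroys the openness of the set $\{\theta:\, c_\theta\cap\gamma_2=\emptyset\}$, so the open--closed argument on the connected arc cannot be run in either direction; your null-limit step (which is fine in itself) only shows that the \emph{open} set $\{\theta:\, c_\theta\cap\gamma_2\neq\emptyset\}$ is nonempty, which is not enough.

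The paper closes precisely this gap as its very first step: by the Lorentzian version of Wadsley's theorem proved in \cite{MounoudSuhr}, the spacelike geodesic flow on $T^1C$ may be assumed to be a free $S^1$-action, so geodesics do form continuous compact families and intersection counts are genuinely locally constant. With that input the paper's argument is also more economical than yours: fix $\gamma$, join the tangent lifts $\dot\eta_1$, $\dot\eta_2$ of any two other geodesics by a path in the $3$-manifold $T^1C$ avoiding the circle $\dot\gamma$, and conclude that $\#(\gamma\cap\eta)$ is independent of $\eta$; existence of at least one intersection is then immediate (two geodesics through a common point always intersect), and parity gives evenness and the bound $\ge 2$. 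Your separate null-limit argument for existence becomes superfluous once constancy is available, and it cannot be made rigorous without the same $S^1$-action. A secondary instance of the same problem: your claim that the space of spacelike geodesics is a connected manifold of dimension $\ge 2$ (so that deleting the diagonal from its square preserves connectedness) presupposes that the orbit space of the geodesic flow is a manifold, which again is what the free $S^1$-action provides; the paper avoids this by working upstairs in $T^1C$.
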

\begin{proof}
The second assertion follows from the first, since obviously there are intersecting spacelike geodesics and any pair of loops in the cylinder has to intersect at least
twice if they intersect once. 

Since $(C,g)$ is spacelike Zoll we can assume with \cite{MounoudSuhr} that the geodesic flow on $T^1C$ is a free $S^1$-action. Fix a unit speed simply closed 
spacelike geodesic $\gamma$ and consider the tangent curve $\dot\gamma$. Further let $\eta_1,\eta_2$ be two unit speed simply closed spacelike geodesics 
geometrically different from $\gamma$. Then the tangent curves $\dot\eta_1$ and $\dot\eta_2$ are disjoint from $\dot\gamma$. Since $\dot\gamma$ is a loop in 
the $3$-manifold $T^1C$ we can connect $\dot\eta_1(1)$ and $\dot\eta_2(1)$ via  a path $\mu\colon [0,1]\to T^1C$ not intersecting $\dot\gamma$. The geodesics 
with initial direction $\mu(s)$ form a (smooth) homotopy by spacelike geodesics geometrically different from $\gamma$  with endpoints $\eta_1$ and $\eta_2$. Since any 
intersection between geometrically different geodesics is transversal, the number of intersection between $\gamma$ and $\eta_1$ has to coincide with the number 
of intersection between $\gamma$ and $\eta_2$. 
\end{proof}

\begin{lem} 
Every globally hyperbolic $2$-dimensional cylinder is globally conformally flat. 
\end{lem}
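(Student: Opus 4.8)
The plan is to pass to the universal cover, straighten the two lightlike foliations into global null coordinates, and then descend a suitable flat representative of the conformal class. Write $(C,g)$ for the globally hyperbolic cylinder and fix a smooth spacelike Cauchy circle $\Sigma\cong S^1$. Since $\Sigma$ is a spacelike, hence non-timelike, loop, Lemma \ref{lem_orient} shows that $(C,g)$ is space- and time-orientable; consequently the two lightlike line fields are orientable and integrate to transverse, globally defined foliations $\mathcal F_1,\mathcal F_2$. As covers of globally hyperbolic spacetimes are again globally hyperbolic, the universal cover $(\widetilde C,\widetilde g)\cong(\R^2,\widetilde g)$ is globally hyperbolic with Cauchy line $\widetilde\Sigma\cong\R$ and lifted foliations $\widetilde{\mathcal F}_1,\widetilde{\mathcal F}_2$.

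Each leaf of $\widetilde{\mathcal F}_i$ is an inextendible lightlike, hence causal, curve, so it meets $\widetilde\Sigma$ exactly once; in particular no leaf is closed, $\widetilde\Sigma$ is a global cross-section, and the leaf space of $\widetilde{\mathcal F}_i$ is identified smoothly with $\widetilde\Sigma\cong\R$. I would take $u,v\colon\widetilde C\to\R$ to be the resulting projections to these leaf spaces, constant along $\widetilde{\mathcal F}_1$ resp.\ $\widetilde{\mathcal F}_2$. Since $du,dv$ annihilate distinct lightlike directions they are everywhere independent, so $(u,v)$ is a global diffeomorphism onto a product of intervals, and in these coordinates $\partial_u,\partial_v$ are null with $\widetilde g=\lambda\,(du\otimes dv+dv\otimes du)$ for a nowhere-vanishing $\lambda$. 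Thus $\widetilde g$ is conformal to the flat metric $g_0=du\otimes dv+dv\otimes du$.

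It remains to descend a flat metric. The deck group $\Z=\langle\alpha\rangle$ acts on $\widetilde C$ by $\widetilde g$-isometries, hence preserves each $\widetilde{\mathcal F}_i$ and acts on its leaf space $\R$; in the coordinates above $\alpha(u,v)=(\phi(u),\psi(v))$ for diffeomorphisms $\phi,\psi$ of $\R$. If $\alpha$ fixed every leaf of one foliation it would also fix the other leaf space pointwise (a pair of leaves, one from each family, meets in a single point), and hence be the identity; therefore $\phi$ and $\psi$ are fixed-point-free orientation-preserving diffeomorphisms with quotient $S^1$, so each is smoothly conjugate to a translation of $\R$. Conjugating $u$ and $v$ separately by these straightenings produces new null coordinates $U,V$ in which $\alpha$ becomes an affine map $(U,V)\mapsto(U+a,V+b)$; the reparametrizations multiply $g_0$ only by a factor of product form $p(U)q(V)$ and hence keep it flat and in the conformal class of $\widetilde g$. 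Because the quotient by $\alpha$ is globally hyperbolic, the translation vector $(a,b)$ must be $g_0$-spacelike (a timelike or null direction would create closed causal curves or destroy the compact Cauchy surface), so $(U,V)\mapsto(U+a,V+b)$ is a genuine isometry of the flat metric $dU\otimes dV+dV\otimes dU$. This flat metric therefore descends to $C$, and $g$ lies in its conformal class, establishing global conformal flatness.

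The technical heart is the third step. One must check that $(u,v)$ is a global diffeomorphism onto a product, equivalently that an $\widetilde{\mathcal F}_1$-leaf and an $\widetilde{\mathcal F}_2$-leaf meet exactly once, which I expect to follow from global hyperbolicity of $\widetilde C$ together with the cross-section $\widetilde\Sigma$; that the conjugacies straightening $\phi$ and $\psi$ can be chosen smoothly; and that the causal type of the translation vector is \emph{forced} by global hyperbolicity rather than assumed. Everything else is the standard local passage to null coordinates, made equivariant under the deck action.
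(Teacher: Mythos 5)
Your overall strategy is the same as the paper's (straighten the two lightlike foliations into null coordinates using a Cauchy surface, then divide by the conformal factor), but the write-up has two genuine gaps, both located exactly at what you call the ``technical heart.'' First, the claim that $(u,v)$ is a global diffeomorphism onto a \emph{product of intervals} --- equivalently that every $\widetilde{\mathcal F}_1$-leaf meets every $\widetilde{\mathcal F}_2$-leaf \emph{exactly} once --- is false. The universal cover of de Sitter space is conformal to the flat strip $\R\times(0,\pi)$, and in null coordinates $u=\varphi-t$, $v=\varphi+t$ its image is the diagonal band $\{0<v-u<2\pi\}$: leaves with $v_0-u_0\notin(0,2\pi)$ never intersect. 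What is true, and is all your argument needs, is ``at most once'': $dv$ is nonzero on the $\widetilde{\mathcal F}_1$-null direction by transversality, so $v$ is strictly monotone along each $\widetilde{\mathcal F}_1$-leaf, whence $(u,v)$ is injective and a diffeomorphism onto an \emph{open subset} of $\R^2$. Second, your argument for fixed-point-freeness of $\phi$ and $\psi$ is a non sequitur: you rule out $\phi=\mathrm{id}$, but a diffeomorphism of $\R$ can have fixed points without being the identity, and both the smooth conjugacy to a translation and the identification of the leaf space of $\mathcal F_1$ with a circle require genuine freeness. The correct argument is causal: if $\phi(u_0)=u_0$, the deck transformation preserves the leaf $\{u=u_0\}$, which then descends to a \emph{closed} lightlike curve in $C$, contradicting causality of the globally hyperbolic cylinder. (Your closing worry about the causal type of $(a,b)$ is, by contrast, a red herring: every translation is an isometry of $dU\,dV$, so nothing about spacelikeness needs to be checked for the descent.)

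It is worth comparing with the paper's proof, which short-circuits your entire descent-and-conjugation step. The paper defines two circle-valued maps $\alpha,\beta\colon C\to S^1$ directly on the cylinder, both equal to the \emph{same} diffeomorphism $\varphi\colon\gamma\to S^1$ on a Cauchy circle $\gamma$, with $\alpha$ constant on the leaves of $\mathcal F_1$ and $\beta$ constant on the leaves of $\mathcal F_2$. Lifting to the universal cover then yields null coordinates $(x,y)$ in which the deck transformation is automatically $(x,y)\mapsto(x+2\pi,y+2\pi)$, so the conformal factor $f$ in $\widetilde g=f\,dx\,dy$ is periodic and $\frac1f g$ is a flat metric on $C$ --- no analysis of the deck action, and no conjugation of diffeomorphisms of $\R$ to translations, is needed. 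Your route can be repaired along the lines above, but it buys nothing over the paper's; the one thing it makes explicit that the paper leaves implicit is that each lightlike leaf crosses the Cauchy surface exactly once, which both proofs use to define the projections.
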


This fact is actually well known. For the sake of completeness we give a proof. Note that the lemma shows that every globally hyperbolic cylinder 
is conformal to one connected component of the complement of at most two simply connected and disjoint non timelike curves. Conversely of course 
every such component is globally hyperbolic.
\\
\begin{proof}
Denote the lightlike foliations of $(C,g)$ by 
$\mathcal{F}_{1,2}$. Let $\gamma$ be any smooth Cauchy hypersurface in a globally hyperbolic cylinder $(C,g)$. Choose a diffeomorphism 
$\varphi\colon \gamma\to S^1$. Define two maps $\alpha,\beta \colon C\to S^1$ to be identical to $\varphi$ on $\gamma$, $\alpha$ to be constant 
on the leafs of $\mathcal{F}_1$ and $\beta$ to be constant on the leafs of $\mathcal{F}_2$. Since the lightlike foliations are transversal
the differentials of $\alpha$ and $\beta$ are linearly independent at every point. Lifting everything to the universal cover gives two coordinates
$x,y$ whose level sets are lightlike. Therefore the metric in these coordinates reads $f(x,y)dxdy$ with $f(x+2\pi,y+2\pi)=f(x,y)$. 
Consequently $f$ descends to the quotient and the metric $\frac{1}{f}g$ is flat.
\end{proof}

\begin{rema}
Next we want to fix a conformal embedding of de Sitter space into $(S^1\times\R,d\varphi^2-dt^2)$.  DeSitter space is isometric to 
$(S^1\times \R,\cosh^2(t)d\varphi^2-dt^2)$. So in order to construct a conformal embedding into the flat cylinder we have to find a 
reparameterization $\psi \colon (0,b)\to \R$ such that $(\id\times \psi)^*(\cosh^2(t)d\varphi^2-dt^2)$ is diagonal. This is equivalent to solving the ODE
$(\psi')^2(s)=\cosh^2\psi(s)$. Since $\psi$ is supposed to be a diffeomorphism we can assume that $\psi'>0$. Therefore we have to solve the equation
$\psi'(s)=\cosh\psi(s)$. In fact we do not need the solution $\psi$ explicitly. All we require is the value $b$, i.e.\ the length of the domain of $\psi$. 
This can be done by integration: We know that 
$$\int_0^s \frac{\psi'(\sigma)}{\cosh\psi(\sigma)}d\sigma=s$$ 
for all $s\in (0,b)$. For $t=\psi(s)$ we then see that (wlog $\lim_{s\to 0}\psi(s)=-\infty$)
$$\int_{-\infty}^t \frac{1}{\cosh(\tau)}d\tau=\psi^{-1}(t).$$ 
The left hand side is equal to $2\arctan e^t$ (which give the solution $\psi=\tan(\log \frac{s}{2})$) and therefore tends to $\pi$ for $t\to\infty$. Thus de Sitter space is conformal to a flat cylinder with circumference
$2\pi$ and height $\pi$. 
\end{rema}

\begin{prop}\label{P3}
Let $(C^2,g)$ be Lorentzian spacelike Zoll cylinder. Then for all $\e>0$ there exists a smooth conformal embedding of $(C,g)$ 
into $(S^1\times \R,\cosh^2(t)d\varphi^2-dt^2)$ whose image is contained in $S^1\times (-\e,\pi+\e)$. 
Especially up to a twofold covering $(C,g)$ admits a conformal embedding into de Sitter space. 
\end{prop}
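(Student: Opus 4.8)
The plan is to convert the statement into a purely conformal claim about the position of the conformal boundary. First I would invoke Proposition \ref{P1} to know that $(C,g)$ is globally hyperbolic, and then the conformal flatness lemma together with the remark following it to realise $(C,g)$ conformally as one connected component $\Omega$ of the complement of at most two disjoint non timelike curves inside the flat cylinder $(S^1\times\R,d\varphi^2-dt^2)$. In the null coordinates $u=\varphi+t$, $v=\varphi-t$ the two lightlike foliations become the level sets of $u$ and of $v$, and the conformal automorphisms of the flat cylinder are exactly the maps that reparametrise $u$ and $v$ separately (possibly exchanging them). Lemma \ref{lem_orient} guarantees that $(C,g)$ is space and time orientable, so the two foliations are globally coherent and the roles of $u$ and $v$ can be fixed once and for all.

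Next I would fix a reference circle. By the proof of Proposition \ref{P1} every spacelike geodesic $\gamma$ is an embedded Cauchy hypersurface; using the freedom to reparametrise $u$ and $v$ I would normalise $\gamma$ to the circle $\{t=0\}$ and present $\Omega$ as $\{(\varphi,t)\,:\,a(\varphi)<t<b(\varphi)\}$, where the past and future conformal boundaries are non timelike graphs (allowing the values $\pm\infty$). The remark preceding the proposition identifies de Sitter space conformally with the flat cylinder $S^1\times(0,\pi)$ of height exactly $\pi$, so producing the desired embedding is equivalent to proving that $\Omega$ has conformal height at most $\pi$, that is, that a lightlike geodesic joining the past to the future conformal boundary winds by at most $\pi$ in $\varphi$; equivalently, the two lightlike geodesics issued from a point of $\gamma$ reach the future boundary at two points of angular distance at most $\pi$, the antipodal case being de Sitter.

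The heart of the argument, and the step I expect to be the main obstacle, is precisely this height bound. Global hyperbolicity alone cannot supply it, since every strip $S^1\times(a,b)$ is globally hyperbolic for any height; the bound must come from the Zoll hypothesis through the behaviour of lightlike geodesics at the conformal boundary. Here I would first use the second assertion of Proposition \ref{P1} — that the universal cover of $(C,-g)$ is not globally hyperbolic — which already forbids $\Omega$ from being the whole cylinder and forces a genuine conformal boundary on each side; one checks that the full flat cylinder would make the $\varphi$\nobreakdash-time cover Minkowski, hence globally hyperbolic, which is excluded. The remaining task is to show that the past and future boundaries are linked by a null shift of at most $\pi$, exactly as in de Sitter. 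Proposition \ref{P2} is the natural instrument: it pins down the intersection number of spacelike geodesics as a constant even integer, and the winding, hence the height, should be read off from this number, de Sitter realising the minimal value two. Controlling the reflection of lightlike curves on the (possibly non smooth) conformal boundary and relating it rigorously to this intersection count is the delicate point.

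Granting the height bound the conclusion is routine: $\Omega$ embeds conformally into the flat cylinder $S^1\times(0,\pi)$, hence into de Sitter via the reparametrisation of the remark, the parameter $\e$ absorbing both the openness of the embedding and the fact that the non timelike boundary graphs $a$ and $b$ need not be straight, which is why the image lands in $S^1\times(-\e,\pi+\e)$ rather than in the closed strip. Finally the twofold covering accommodates the cases where the natural presentation of $(C,g)$ — for instance a single conformal boundary circle double covering the pair of de Sitter boundary circles, or spacelike geodesics that close up only after a double winding — does not immediately match the de Sitter structure; passing to the appropriate double cover of $(C,g)$ repairs this and yields the conformal embedding into de Sitter itself.
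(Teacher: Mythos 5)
Your skeleton --- realise $(C,g)$ conformally as a domain in the flat cylinder via Proposition \ref{P1} and the conformal flatness lemma, reduce everything to a bound of $\pi$ on the conformal height, then straighten the boundary --- is exactly the skeleton of the paper's proof. But there is a genuine gap at precisely the step you yourself flag as ``the heart of the argument'': the height bound is never proven. Your concrete suggestion, to read the height off the intersection number of Proposition \ref{P2} with ``de Sitter realising the minimal value two'', is left as an unexecuted program, and it is not the mechanism from which the bound actually comes. The intersection number $2k$ is unbounded (finite covers of de Sitter realise every $k$), and larger $k$ corresponds to \emph{smaller} conformal height; so one needs an upper bound on the height valid uniformly in $k$, and no analysis of the intersection count by itself produces it without the same limiting argument you are missing. (Relating lightlike geodesics to boundary reflexions and intersection counts is essentially the ping-pong analysis of Section \ref{sec_conf}, whose key length estimate in Proposition \ref{P2.3} uses a Killing field --- an assumption absent from Proposition \ref{P3}.)

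The paper's argument for the bound is short and uses only simple-closedness of spacelike geodesics plus continuity of the geodesic flow. Normalise so that a given lightlike geodesic of $(C,g)$ passes through $(0,0)$ and has symmetric endpoints $(a,a)$ and $(-a,-a)$ on $\partial F(C)$. By continuity of the geodesic flow, spacelike geodesics through $(0,0)$ with initial direction close to the lightlike one pass arbitrarily close to both endpoints. Since $(C,g)$ is spacelike Zoll, each such geodesic is simply closed and generates $\pi_1(C)$, hence its image is a $1$-Lipschitz graph over $S^1$ winding exactly once, i.e.\ of total $\varphi$-variation $2\pi$. Such a curve runs from near $(-a,-a)$ up to near $(a,a)$ (spending $\varphi$-length about $2a$ along the lightlike geodesic) and must then descend by $2a$ in $t$ over the remaining $\varphi$-length $2\pi-2a$; the Lipschitz condition forces $2a\le 2\pi-2a$, i.e.\ $a\le\pi/2$, which is the height bound. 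Your observation that Proposition \ref{P1} excludes the whole cylinder is correct but far weaker than what is needed. Once the bound is in hand, the rest of your outline does match the paper: achronality of $\partial F(C)$ (from global hyperbolicity) gives two disjoint simply closed non timelike boundary curves, one approximates them within $\e$ by smooth spacelike curves, and the conformal map built from the average of the two boundary graphs places the image in $S^1\times(-\e,\pi+\e)$, the two-fold cover statement following since de Sitter has conformal height $\pi$ and circumference $2\pi$.
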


 Note that the conformal embedding is not surjective in general. Finite coverings of de Sitter serve as examples.

\begin{proof}
Let $(C,g)$ be a spacelike Zoll surface and $F\colon C\to S^1\times\R$ be a conformal embedding. Consider the image of a 
lightlike geodesic of $(C,g)$ in $S^1\times\R$. We can assume that $(0,0)$ lies on the image and that the image is symmetric about $(0,0)$. 
Denote with $(a,a)$  and $(-a,-a)$ its future and past endpoint respectively on $\partial F(C)$ in $S^1\times \R$. Since $(C,g)$ is spacelike Zoll the fundamental class of every
spacelike geodesic generates $\pi_1(C)$ and via the conformal embedding their images generate $\pi_1(S^1\times\R)$. Note that due to the 
continuity of the geodesic flow for every pair of neighborhoods of $(a,a)$ and $(-a,-a)$ there exist spacelike geodesics passing through $(0,0)$ 
and intersecting these neighborhoods.  The geodesics have to close up after one round. Since the circumference of the circle is $2\pi$, the 
value of $4a$ is bounded from above by $2\pi$, i.e.\ $a\le \pi/2$.

Let $\gamma$ be a lightlike geodesic in $(C,g)$. Denote with $x$ and $y$  the future resp. past endpoints of the $F\circ \gamma$ in $S^1\times \R$. Since 
$(C,g)$ is globally hyperbolic the boundary of $F(C)$ is achronal (see e.g. \cite{FHS} Theorem 3.29 and 4.16). Therefore we have $F(C)\subseteq S^1\times\R \smallsetminus (I^+(x)\cup I^-(y))$. The last set being 
compact shows that $\partial F(C)$ consists of two simply closed disjoint non timelike curves $\gamma^\pm \colon S^1\to S^1\times\R$  with $t\circ\gamma^+>t\circ\gamma^-$. Since neither 
$\gamma^+$ nor $\gamma^-$ can be everywhere lightlike, we can approximate both curves  up to a given error $\e>0$ by smooth simply closed disjoint spacelike curves 
$\gamma^\pm_\e\colon S^1\to S^1\times\R$. Note that the precompact component of $S^1\times\R \smallsetminus (\gamma^+_\e(S^1)\cup \gamma^-_\e(S^1))$ is a 
globally hyperbolic spacetime which is $\e$-close to $F(C)$ whenever $\gamma^\pm_\e$ are $\e$-close to $\gamma^\pm$. 

Now consider the cylinder $(S^1\times\R,d\varphi^2-dt^2)$ as the quotient of $(\R^2,dxdy)$ by the $\Z$-operation generated by $(x,y)\mapsto (x+\sqrt{2}\pi,y
+\sqrt{2}\pi)$. Denote with $\widetilde\gamma^\pm_\e$ the lift of $\gamma^\pm_\e$ to $\R^2$. W.l.o.g. we can assume that $\widetilde\gamma^\pm_\e$ are 
parameterized as graphs over the $x$-axis, i.e.\ $\widetilde\gamma^\pm_\e(s)=(s,\theta^\pm_\e(s))$ for some maps $\theta^\pm_\e\colon\R\to\R$. Since 
$\gamma^\pm_\e$ are spacelike and simply closed $\theta^\pm_\e$ are $\Z$-equivariant diffeomorphisms. 

Set $\theta_\e(s):=\frac{1}{2}(\theta^+_\e(s)+\theta^-_\e(s))$. $\theta_\e$ is obviously a diffeomorphism of the reals. Define the diffeomorphism 
$\Theta\colon \R^2\to\R^2$ by $\Theta(x,y):=(x,\theta_{\e}^{-1}(y))$. It is conformal and maps the 
spacelike curve $s\mapsto (s,\theta_\e(s))$ to the diagonal $\triangle =\{(x,y)|\; x=y\}$. We know that 
$$\sup_s|\theta_{\e}^+(s)-\theta_{\e}^-(s)|\le \sqrt{2}(\pi+\e),$$
i.e.\ therefore the curves $\Theta\circ \widetilde\gamma^\pm_\e$ have distance at most $\frac{\pi+\e}{2}$ from the diagonal. This implies the same maximal 
distance from the sets $\{t=0\}$ in the quotient space. Since for the chosen conformal embedding of de Sitter we have $\theta^\pm(s)=s\pm\frac{\pi}{\sqrt{2}}$, 
the claim follows. 
\end{proof}


\section{ Killing fields on spacelike Zoll surfaces.}\label{section_general}
From now on, we will be interested in spacelike Zoll cylinders admitting a Killing field $K$. We will prove in this section that the 
dynamics of $K$ are always similar to that of a Killing field of a cover of de Sitter space. See Proposition \ref{prop_atlases} for the precise statement.
\begin{prop}
Let $(S,g)$ be a  connected Lorentzian surface all of whose spacelike geodesics are closed. Then any locally Killing vector field of $(S,g)$ is complete and is therefore Killing. 
\end{prop}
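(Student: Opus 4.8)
The plan is to show that the local flow of $K$ never escapes to infinity in finite time; completeness then follows, and a complete locally Killing field generates a one-parameter group of isometries, hence is Killing. Since $K$ satisfies $\mathcal L_Kg=0$ it is in particular a globally defined smooth vector field, so only the completeness of its flow $\phi_t$ is at stake. Three facts will be used repeatedly. First, each $\phi_t$ is a local isometry, so it sends spacelike geodesics to spacelike geodesics preserving arclength; as every spacelike geodesic is closed, hence a compact curve, the images $\phi_t(\sigma)$ of a fixed closed spacelike geodesic $\sigma$ are again closed spacelike geodesics of the same length. Second, along any geodesic $\gamma$ the function $s\mapsto g(K,\dot\gamma)$ is constant (the Killing conservation law). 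Third, since every spacelike geodesic is closed it is complete, so the geodesic spray $X$ is a complete vector field on the bundle $U$ of unit spacelike vectors, all of whose orbits are periodic, and hence compact circles.

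Passing if necessary to the orientation double cover, which is a cylinder and to which $K$ lifts, I may assume $(S,g)$ is a globally hyperbolic cylinder (Proposition \ref{P1}); completeness of the lift forces completeness of $K$, so it suffices to treat this case. If $K(p)=0$ the orbit of $p$ is constant and there is nothing to prove, so fix $p$ with $K(p)\neq 0$ and let $c\colon(a,b)\to S$ be the maximal integral curve of $K$ through $p$. Suppose for contradiction that $b<\infty$. Lift $K$ to the vector field $\widehat K$ on $U$ generating $d\phi_t$; then completeness of $\widehat K$ is equivalent to completeness of $K$, and because $\phi_t$ maps spacelike geodesics to spacelike geodesics one has $[\widehat K,X]=0$. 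Let $v_0\in U$ project to $(p,\dot\sigma(0))$ for a fixed closed spacelike geodesic $\sigma$ through $p$, and let $O_0$ be its compact circular $X$-orbit.

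Using the commutation relation $\psi^{\widehat K}_t\circ\psi^X_s=\psi^X_s\circ\psi^{\widehat K}_t$ together with the completeness of $X$, the existence of $\psi^{\widehat K}_t(v_0)$ on $[0,b)$ propagates to every point of $O_0$; hence $\psi^{\widehat K}_t$ is defined on all of $O_0$ for $t\in[0,b)$ and carries $O_0$ to the $X$-orbit $O_t$ through $\widehat c(t)$. Projected to $S$, this says that the whole closed geodesic $\sigma$ is transported by $\phi_t$ for every $t\in[0,b)$, with $\phi_t(\sigma)$ a closed spacelike geodesic of fixed length through $c(t)$. I expect the main obstacle to be preventing the circles $O_t$, equivalently the moving geodesics $\phi_t(\sigma)$, from running off to infinity as $t\to b$: this is exactly the point at which an arbitrary field commuting with $X$ could fail to be complete, so the Lorentzian structure has to be used here.

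To handle this I would invoke Proposition \ref{P2}: any two spacelike geodesics intersect, so each $\phi_t(\sigma)$ meets the fixed compact geodesic $\sigma$. Combined with the facts that the $\phi_t(\sigma)$ are Cauchy hypersurfaces (achronal graphs over the compact base of the globally hyperbolic splitting) of constant length, and that the conservation law $g(K,\dot\gamma)\equiv\mathrm{const}$ controls the component of $K$ transverse to them, I expect to confine the family $\{\phi_t(\sigma)\}_{t\in[0,b)}$ to a fixed compact band; the verification of this confinement is the technical heart of the argument. Once it is in place, $c([0,b))$ lies in a compact set, on a neighborhood of which $\phi_t$ has a uniform existence time $\varepsilon>0$, so that $\phi_{b-\varepsilon/2}\circ\phi_{\varepsilon/2}$ extends $c$ past $b$, contradicting maximality. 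Hence $b=\infty$, and symmetrically $a=-\infty$, so $K$ is complete and therefore Killing.
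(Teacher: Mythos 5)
Your proof has a genuine gap, and you have located it yourself: the confinement step is only announced (``I expect to confine \dots the verification of this confinement is the technical heart of the argument''), and that step is where the entire difficulty sits. Moreover, the tools you propose cannot deliver it. Constancy of the $g$-length of the geodesics $\phi_t(\sigma)$, together with the fact that each of them meets the fixed compact geodesic $\sigma$, does \emph{not} force the family into a compact band, because Lorentzian length does not control size: already in de Sitter space all spacelike geodesics through a fixed point $p$ are closed of length $2\pi$ and any two spacelike geodesics intersect, yet the union of the geodesics through $p$ is non-compact, since as the initial direction degenerates to a lightlike one the corresponding geodesics reach points arbitrarily far away. The conservation law is the right extra ingredient (every $\phi_t(\sigma)$ carries the same Clairaut constant $g(K,\dot\sigma)$ as $\sigma$), and one can try to deduce that the velocities of $\phi_t(\sigma)$ at their intersection points with $\sigma$ range in a compact subset of $TS$; but this compactness fails precisely near zeros of $K$, at points where $K$ is lightlike, and when $g(K,\dot\sigma)=0$ --- cases you do not address, and which do occur (the hyperbolic family of Section \ref{section_hyperbolic} has vanishing $K$). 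There is also a gap at the very start: the statement concerns an arbitrary connected Lorentzian surface, whereas Propositions \ref{P1} and \ref{P2} are statements about cylinders; that the orientation cover of $S$ is a cylinder is not available at this stage (it would require the topological classification of \cite{MounoudSuhr}), so the reduction ``I may assume $(S,g)$ is a globally hyperbolic cylinder'' is unjustified.

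The paper's proof shows that no confinement of the moving family --- and no topological or causal input at all --- is needed. For $z\in S$ set $\omega_z=\sup\{t:\Phi_K^t(z)\ \text{exists}\}$ and, for a spacelike geodesic $\gamma$, set $\omega_\gamma=\inf_{z\in\gamma}\omega_z$. If some $x\in\gamma$ had $\omega_x>\omega_\gamma$, then $\Phi_K^{\omega_\gamma}(\gamma)$ makes sense as the geodesic through $\Phi_K^{\omega_\gamma}(x)$ with velocity $d\Phi_K^{\omega_\gamma}(\dot\gamma)$; it is spacelike, hence by hypothesis closed and compact, so the orbits of \emph{all} points of $\gamma$ accumulate on this single compact geodesic as $t\to\omega_\gamma$ and therefore extend a uniform amount of time beyond $\omega_\gamma$, contradicting the definition of $\omega_\gamma$ as an infimum (the paper phrases this as: otherwise the image geodesic could not be contained in any compact set). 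Hence $\omega$ is constant along every spacelike geodesic; since any two points of $S$ are joined by a broken spacelike geodesic, $\omega$ is constant on $S$; and a finite constant value is impossible because $\omega_{\Phi_K^s(z)}=\omega_z-s$. Your commutation argument on the unit spacelike tangent bundle reproduces the first of these three steps in an equivalent form; what you are missing is that the other two steps --- connectedness by broken spacelike geodesics plus the translation identity for escape times --- finish the proof outright, making the compact-band argument unnecessary.
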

\begin{proof} Let $K$ be locally Killing vector field on $S$ and let $\Phi_K$ be its local flow. For any $z\in S$,  we define $\omega_z$ by  $\omega_z=\sup\{t;\Phi_K^t(z)\ \text{exists}\}$. 
Let $\gamma$ be  a spacelike geodesic and $\omega_\gamma=\inf_{z\in \gamma} \omega_z$. Let us assume that there exists $x\in \gamma$ such that $\omega_x>\omega_\gamma$ then 
$\Phi^K_{\omega_\gamma}(\gamma)$ is a  spacelike geodesic that is not contained in any compact subset of $S$. This clearly contradicts the assumption 
that all spacelike geodesics are closed. Therefore  we have $\omega_x=\omega_\gamma$ for any $x\in \gamma$. Since any pair of points in $S$ can be joined by a broken spacelike geodesic, 
the function $\omega$ is constant and  the flow of $K$  is complete.\end{proof}

%
\begin{prop}\label{prop_cases}
 Let $(C,g)$ be a spacelike Zoll Lorentzian cylinder admitting a non trivial Killing field $K$. Then every  spacelike geodesic of $g$ is at least twice tangent to $K$ or contains at least two zeros of $K$. It follows that  
 \begin{enumerate}
  \item $K$ is   periodic if and only if it is  spacelike and if and only if it has a recurrent orbit;
  \item $K$ is vanishing if and only if $K$ is somewhere timelike;
  \item any geodesic  perpendicular to $K$ contains all its zeros.
 \end{enumerate}
In particular, $K$ has to be spacelike somewhere. 
 Further $K$ has only finitely many lightlike orbits.
\end{prop}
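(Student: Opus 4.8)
The plan is to exploit the Clairaut first integral of a Killing field along geodesics. Fix a unit-speed spacelike geodesic $\gamma\colon\R/L\Z\to C$ and let $N$ be its future-pointing unit timelike normal (available since $(C,g)$ is time-orientable by Lemma \ref{lem_orient}), so that $g(\dot\gamma,\dot\gamma)=1$ and $g(N,N)=-1$. Writing $K|_\gamma=a(s)\dot\gamma(s)+b(s)N(s)$ one has $a=g(K,\dot\gamma)$ and $b=-g(K,N)$, and because $K$ is Killing and $\gamma$ is a geodesic the function $a=g(K,\dot\gamma)$ is constant, say $a\equiv c$. This yields the key pointwise identity $g(K,K)=c^2-b^2$ along $\gamma$. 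Moreover $K|_\gamma$ is a Jacobi field (the variation field of $s\mapsto\Phi_K^t(\gamma(s))$), its tangential part $c\dot\gamma$ is itself Jacobi, so the normal part $bN$ is Jacobi and $b$ solves a scalar linear equation $b''+\kappa b=0$ for a smooth function $\kappa$. In particular every zero of $b$ is simple unless $b\equiv 0$.

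The heart of the argument is to show that $b$ has at least two zeros whenever $b\not\equiv 0$. If $b$ had constant sign, say $b>0$, then the normal displacement $bN$ would be future timelike and bounded away from $0$ on the compact circle, so for small $t>0$ the spacelike geodesic $\Phi_K^t(\gamma)$ --- which is geometrically distinct from $\gamma$ because $b\not\equiv 0$ --- would lie in $I^+(\gamma)$ and hence be disjoint from $\gamma$; this contradicts Proposition \ref{P2}. Therefore $b$ vanishes, and since its zeros are simple and $b$ is periodic their number is even, hence at least two. The dichotomy in the statement now falls out of the value of $c$: a zero of $b$ with $c\ne 0$ forces $K=c\dot\gamma\ne 0$ to be spacelike and tangent to $\gamma$, so $\gamma$ is at least twice tangent to $K$; a zero of $b$ with $c=0$ has $a=b=0$, i.e.\ is a zero of $K$, so $\gamma$ contains at least two zeros of $K$. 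This also identifies the geodesics perpendicular to $K$ as exactly those with $c=0$.

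The consequences follow by reading the identity $g(K,K)=c^2-b^2$ together with the flow-invariance of $\lambda:=g(K,K)$. Since $(C,g)$ is globally hyperbolic (Proposition \ref{P1}), a Cauchy time function is strictly monotone along any causal curve; as $\lambda$ is constant on $K$-orbits, no timelike or lightlike orbit is recurrent and none is periodic, which together with $\lambda=c^2-b^2$ gives the equivalences in (1). For (2), near a zero the linearisation of $K$ lies in $\mathfrak{so}(1,1)$ and is a nonzero infinitesimal boost (zeros are isolated as $K\not\equiv 0$), whose indefinite quadratic norm makes $K$ timelike nearby; conversely, if $K(q)$ is timelike then the spacelike geodesic through $q$ orthogonal to $K(q)$ has $c=0$, so the zeros of $b$ on it (which exist by the previous paragraph) are genuine zeros of $K$ and $K$ vanishes. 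That $K$ is spacelike somewhere follows by choosing a spacelike geodesic avoiding the isolated zeros: on it $\sup\lambda=c^2\ge 0$ is attained at a tangency, where $K$ is spacelike. Finally, each lightlike orbit is an inextendable causal curve (recall $K$ is complete), so it meets a fixed Cauchy geodesic $\gamma$ exactly once, at a point where $b=\pm c$; uniqueness for $b''+\kappa b=0$ prevents $b^2=c^2$ from having accumulating solutions, so there are only finitely many such orbits.

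The genuinely hard part is not the tangency count but the global dynamics behind the consequences: making rigorous that a causal Killing orbit cannot be recurrent and, above all, upgrading the $c=0$ geodesics to contain \emph{all} zeros of $K$ in item (3). For the latter I would argue that two geometrically distinct perpendicular geodesics can only meet at zeros of $K$ (orthogonality to $K$ in two independent directions forces $K=0$), and then use Proposition \ref{P2} and the finiteness of the zero set to show all perpendicular geodesics share the same zeros; controlling this common intersection as the direction varies --- in the presence of the refocusing phenomena typical of Zoll metrics --- is the delicate step.
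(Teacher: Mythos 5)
Your first step is correct and is a genuinely different (and clean) route compared to the paper: where the paper pushes $\gamma$ along the flow and invokes Proposition \ref{P2} directly, you decompose $K|_\gamma=c\dot\gamma+bN$, observe that the normal part $bN$ is a Jacobi field, and get the ``at least two'' count from simplicity and parity of the zeros of $b$; item (2) also matches the paper. But the remaining items have genuine gaps. In item (1) you prove only the easy implications: global hyperbolicity rules out causal periodic or recurrent orbits, which shows that a periodic $K$, or a recurrent orbit, must be spacelike. The substantive directions --- \emph{spacelike implies periodic} and \emph{recurrent orbit implies periodic} --- are not addressed, and the identity $g(K,K)=c^2-b^2$ does not yield them. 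The paper's route is: the minimum of $\alpha=g(K,K)$ along a compact spacelike geodesic is attained at a critical point of $\alpha$ on $C$, whose $K$-orbit is then a spacelike geodesic, hence closed; a recurrent orbit is upgraded to a closed one via Clairaut (the return map preserves a spacelike geodesic, so it acts as an isometry of a Riemannian circle and has a finite orbit); and a closed orbit forces periodicity because the differential of the return map at a fixed point lies in $\operatorname{SO}_0(1,1)$ and fixes the eigenvector $K_x$, hence equals the identity. None of this is in your proposal. Similarly, item (3) is admittedly incomplete in your text; the missing trick is that two spacelike geodesics perpendicular to $K$ which meet at a common zero $p$ are flow-translates of one another (the boost $d\Phi^{t}_K(p)$ acts transitively, up to sign, on unit spacelike directions at $p$), and since zeros are fixed by the flow, flow-translates contain exactly the same zeros --- no control ``as the direction varies'' is needed.

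The finiteness of the set of lightlike orbits is the most serious problem, because the step you propose would fail. First, a lightlike orbit of $K$ is in general \emph{not} an inextendable causal curve: when $K$ has zeros, its lightlike orbits are separatrices limiting on saddle zeros (visible already on de Sitter with a hyperbolic Killing field), so such an orbit is a proper subsegment of a flow-invariant lightlike geodesic and may miss a chosen Cauchy geodesic entirely, the Cauchy geodesic crossing that invariant lightlike geodesic on the other side of the zero. Second, and decisively, the claim that uniqueness for $b''+\kappa b=0$ prevents $b^2=c^2$ from having accumulating solutions is not valid: $u=b\mp c$ solves the \emph{inhomogeneous} equation $u''+\kappa u=\mp\kappa c$, and zeros of such a solution can accumulate when the data are merely smooth (at an accumulation point one only deduces $\kappa c=0$ together with the vanishing of finitely many derivatives, which is no contradiction outside the analytic category). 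Ruling out this accumulation is exactly the hard geometric content of the paper's proof: infinitely many lightlike orbits would force either infinitely many zeros of $K$ on a compact geodesic, or a closed spacelike geodesic orbit (contradicting the first part of the proposition), or an open strip foliated by lightlike orbits; such a strip is flat, isometric to $(I\times\R,dxdy)$, and therefore contains arbitrarily long spacelike geodesic segments, contradicting the boundedness of lengths coming from Wadsley's theorem \cite[Theorem 2.3]{MounoudSuhr}. Your ODE argument cannot substitute for this analysis.
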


\begin{proof} Let $\gamma$ be a spacelike geodesic of $g$. If there exists $t_0$ such that $\gamma\smallsetminus \gamma(t_0)$ is transverse to $K$ then by pushing $\gamma$ along the flow of $K$ gives a spacelike geodesic intersecting $\gamma$ 
in at most one point. But, according to Proposition \ref{P2}  this is impossible.

It is well known that if $K$ vanishes then it is somewhere timelike. Reciprocally, if $K$ is  timelike at a point $x\in C$, consider the  geodesic $\gamma$ defined by $\gamma(0)=x$, $g(\gamma'(0),\gamma'(0))=1$ and  $g(\gamma'(0),K_x)=0$. 
As $\gamma$ cannot be tangent to $K$ and as it cannot be everywhere transverse to it, $K$ vanishes somewhere along $\gamma$.  Moreover, if $m$ is a zero of $K$ and  $\gamma'$ a spacelike geodesic containing $m$, we choose $\gamma'$ 
different from $\gamma$. By Proposition \ref{P2} $\gamma'$ intersects $\gamma$. As $\gamma$ and $\gamma'$ cannot be tangent and as they are both perpendicular to $K$,  the intersection can be only 
at a zero of $K$. It follows that there exists $t_0$ such that $\Phi^{t_0}_K(\gamma)=\gamma'$ and therefore $m\in \gamma$ and any zero of $K$ is on $\gamma$.

According to \cite{MounoudSuhr}, a spacelike  Zoll surface has no closed lightlike geodesic, therefore a periodic Killing field has to be spacelike.
Reciprocally, if $K$ is a spacelike Killing field and if $\gamma$ is a spacelike geodesic that is not an orbit of $K$ then Clairaut's first integral imposes the value of $g(K,K)$ at the points where $K$ and $\gamma$ are tangent. 
It follows that $K$ is tangent to $\gamma$ only at points where the restriction of the function $\alpha$ defined by $\alpha(x)=g_x(K_x,K_x)$ to $\gamma$ reaches its maximum. But as $\gamma$ is compact this function also has a minimum, 
a point $x_0$ realizing this minimum has to be a critical point of $\alpha$, considered as a function on $C$.  The orbit of $K$ through $x_0$ is therefore  a spacelike geodesic and so is closed.  

If $K$ has a recurrent orbit, there exists a spacelike geodesic $\gamma$ intersecting transversally this orbit at a point $x$ and $t_0>0$ such $\Phi_K^{t_0}(x)\in \gamma$. Since a geodesic $\eta$ emanating from $\Phi_K^{t_0}(x)$ 
is uniquely determined by $g(\dot \eta,\dot \eta)$ and $g(K,\dot \eta)$, we have $\Phi^{t_0}_K(\gamma)=\gamma$. Consequently, $\Phi_K^{t_0}$ is an isometry of $\gamma$ seen as a Riemannian circle. Therefore the orbit of $\Phi_K^{t_0}$ 
is dense or finite.  It cannot be  dense as $K$ would be everywhere transverse to $\gamma$. Hence, it is finite and $K$ has a closed orbit. 

The proposition follows then from the fact that a (complete) Killing field on Lorentzian surface that has a closed leaf is periodic. Indeed, every geodesic emanating from a point $x$ contained in a closed orbit of $K$ is mapped 
to itself by $\Phi_K^{t_0}$ for some $t_0>0$. The isometry $\Phi_K^{t_0}$ has a fixed point $x$  and its differential $\opna{d}\Phi^{t_0}_K(x)$ is an element of $\opna{SO_0}(1,1)$ (if $C$ is not assumed to be orientable we 
replace $t_0$ by $2t_0$) having an eigenvalue equal to $1$ (associated to the eigenvector $K_x$) therefore  $\opna{d}\Phi_K^{t_0}=\opna{Id}$ and therefore $\Phi_K^{t_0}=\opna{Id}$.

Assume that $K$ has an infinite number of lightlike orbits. Since spacelike Zoll surfaces are globally hyperbolic, any given spacelike geodesic $\gamma$ intersects all lightlike 
geodesics that contains lightlike orbits of $K$. If the complement of the lightlike orbits has only finitely many connected components, then an open subset of $C$ is foliated by 
lightlike orbits of $K$. If not, the complement has infinitely many connected components. Since $K$ is smooth and $\gamma$ is compact, there exist an infinite number of these 
components on which $K$ is transversal to $\gamma$. Choose any such connected component. If $K$ is spacelike on it, then $g(K,K)$ has a maximum on the intersection of $\gamma$ 
with that component. But then one orbit of $K$ is a spacelike geodesic and therefore closed. This contradicts the first part of the proof. So these infinitely many connected 
components $K$ has to be timelike. We can therefore choose $\gamma$ to be orthogonal to $K$. But it means that $\gamma$ cuts only timelike or singular integral curves of $K$, 
therefore $\gamma$ contains an infinite number of $0$ of $K$. Which is impossible since $\gamma$ is compact. Therefore we can assume that an open subset of $C$ is foliated by 
lightlike orbits of $K$. It follows from the Lorentzian version of Wadsley's theorem, see \cite[Theorem 2.3]{MounoudSuhr}, that the set of  lengths of spacelike geodesics of a 
spacelike Zoll metric is bounded. On the other side, if $S$ is a strip foliated by lightlike orbits of $K$ then $S$ is flat and isometric to $(I\times \R, dxdy)$ for some interval 
$I$.  Thus, for any $T>0$ there exists a spacelike geodesic segment contained in $S$ whose length is greater than $T$. Hence, $(C,g)$ does not contain any  strip  foliated by lightlike orbits of $K$.
\end{proof}

\begin{prop}\label{prop_llgeod}
Let $(C,g)$ be a spacelike Zoll cylinder  admitting a non trivial Killing field $K$. Let $\eta$ be a lightlike geodesic of $g$ that is transverse to $K$ and $\alpha$ be 
the function on $C$ defined by $\alpha(x)=g_x(K_x,K_x)$. Then the function $\alpha$ tends to $+\infty$ at both ends of $\eta$. Moreover, if $K$ is not periodic 
then $\alpha$  vanishes once or twice on  $\eta$ and if there exists $x\in \eta$ such that $\alpha(x)<0$ then it vanishes exactly twice.
\end{prop}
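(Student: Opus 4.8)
The plan is to extract a first integral along $\eta$, reduce $g$ to a normal form adapted to $K$ and $\eta$, and then read off both the growth and the zeros of $\alpha$ from that normal form together with Proposition \ref{prop_cases}. First note that, $K$ being Killing and $\eta$ a geodesic, the quantity $c:=g(K,\dot\eta)$ is constant along $\eta$. Since $\dot\eta$ is lightlike, its orthogonal complement in the $2$-dimensional tangent space is the line $\R\dot\eta$; as $K$ is transverse to $\eta$ it is not proportional to $\dot\eta$, whence $c\neq0$. Next, let $s$ be an affine parameter on $\eta$ and consider $(t,s)\mapsto\Phi^t_K(\eta(s))$, a local diffeomorphism onto a tube around $\eta$ because $K$ is transverse to $\eta$ and complete. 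Because $\Phi_K$ acts by isometries, $\partial_t=K$, $g(\partial_s,\partial_s)=g(\dot\eta,\dot\eta)=0$, $g(\partial_t,\partial_s)=c$ and $g(\partial_t,\partial_t)=g(K,K)=\alpha$ depend only on $s$, so that $g=\alpha(s)\,dt^2+2c\,dt\,ds$, with nondegenerate determinant $-c^2$. Along $\eta$ one has $K\neq0$, so the zeros of $\alpha$ are exactly the intersections of $\eta$ with lightlike orbits of $K$, which are finite in number by Proposition \ref{prop_cases}.

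For the growth statement, which I expect to be the main obstacle, I would compute geodesics in the normal form. A unit spacelike geodesic $\gamma$ of momentum $p:=g(\dot\gamma,K)$ satisfies $\alpha\,\dot t^2-2p\,\dot t+1=0$, so it is confined to $\{\alpha\le p^2\}$ and its $s$-coordinate turns exactly where $\alpha=p^2$. If $\alpha$ were bounded above near one end of $\eta$, then taking $p^2$ larger than that bound would yield a spacelike geodesic whose $s$-coordinate runs monotonically into the end without ever turning; such a geodesic cannot be a closed loop, contradicting the Zoll hypothesis. This already shows $\alpha$ is unbounded at each end. The delicate point is to upgrade unboundedness to the genuine limit $\alpha\to+\infty$, i.e.\ to exclude oscillation with local maxima escaping to $+\infty$; the plan is to exploit that the spacelike geodesics are \emph{simply} closed of a fixed common length, which rigidifies the $s$-oscillation and should force the turning levels $\{\alpha=p^2\}$ to march monotonically to the ends as $p\to+\infty$.

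Granting $\alpha\to+\infty$ at both ends, $\alpha$ changes sign an even number of times, so its number of transverse zeros is even. Suppose $\alpha(x)<0$ for some $x\in\eta$, so $K$ is timelike at $x$; then Proposition \ref{prop_cases} gives that $K$ vanishes and that all of its zeros lie on a single spacelike geodesic perpendicular to $K$. I would argue that this forces the timelike locus $\{\alpha<0\}$ to be one band bounded by two lightlike orbits of $K$: a second connected component would produce a second $p=0$ geodesic, perpendicular to $K$ and trapped in a disjoint range of $s$, which cannot also contain all the zeros of $K$. Hence $\eta$ crosses this single band exactly once, producing exactly two transverse zeros of $\alpha$. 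If instead $\alpha\ge0$ along $\eta$ while $K$ is not periodic, then by Proposition \ref{prop_cases} $K$ is nowhere timelike but not spacelike everywhere, so $\eta$ meets one or two of the finitely many lightlike orbits tangentially, giving one or two (even-order) touching zeros.

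In summary, the argument splits cleanly into the analytic core (blow-up of $\alpha$) and the combinatorial tail (sign and counting). The finiteness of the lightlike orbits and the causal dichotomy of Proposition \ref{prop_cases} do all the work for the second and third sentences of the statement, while the heart of the matter is the trapping argument in the normal form $g=\alpha(s)\,dt^2+2c\,dt\,ds$; securing the monotone blow-up $\alpha\to+\infty$ is where I expect the real effort, and all the remaining assertions follow once it is in place.
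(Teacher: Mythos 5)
Your setup (the $K$-saturated normal form $g=\alpha(s)\,dt^2+2c\,dt\,ds$, the Clairaut constant $p=g(\dot\gamma,K)$, and the observation that a unit spacelike geodesic is confined to $\{\alpha\le p^2\}$ and turns exactly where $\alpha=p^2$) is sound and matches the paper's framework. But the heart of the proposition --- that $\alpha$ has a genuine limit $+\infty$ at each end, not merely $\limsup=+\infty$ --- is precisely the step you leave as a ``plan''. Your trapping argument can be made to yield unboundedness (modulo care: the tube coordinates need not be injective and $\eta$ need not be complete, both of which you can bypass by deriving the contradiction from Proposition \ref{prop_cases}, since a closed geodesic with $p\neq0$ must be twice tangent to $K$), but ``the common length rigidifies the $s$-oscillation'' is not an argument, and nothing in your text excludes local maxima of $\alpha\circ\eta$ escaping to infinity. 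The paper's missing idea is a convexity lemma: if $(\alpha\circ\eta)'(t_0)=0$ at a point where $\alpha>0$, then the $K$-orbit through $\eta(t_0)$ is itself a spacelike geodesic, hence closed and carrying conjugate points because $g$ is spacelike Zoll; since the curvature is constant along a Killing orbit it must be positive there, and then $(\alpha\circ\eta)''(t_0)>0$ (Lemma 4.9 of \cite{Besse}). Thus every critical point of $\alpha\circ\eta$ in $\{\alpha>0\}$ is a strict local minimum, which is exactly what turns unboundedness into monotone divergence. Without this, your proof of the first and main assertion is incomplete.

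The counting in your last paragraph also contains a false step. You argue that $\{\alpha<0\}$ must be a single band because a second component would carry a second $p=0$ geodesic ``trapped in a disjoint range of $s$'', contradicting Proposition \ref{prop_cases}. But a $p=0$ geodesic is only trapped in $\{\alpha\le 0\}$: it passes through the zeros of $K$, where $\alpha=0$, and can continue into another component of $\{\alpha<0\}$. De Sitter space with a hyperbolic (boost) Killing field shows this really happens: $\{\alpha<0\}$ consists of two diamonds whose closures meet at the two saddle zeros of $K$, and every geodesic perpendicular to $K$ runs through both diamonds and both zeros --- yet each lightlike geodesic transverse to $K$ crosses only one diamond, which is the statement actually to be proved. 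So the correct claim concerns $\eta$, not the topology of $\{\alpha<0\}$, and establishing it is where the paper works hardest: it saturates spacelike geodesics near $\eta$ to produce flow boxes $U$, $U'$, shows each contains exactly two lightlike orbits of $K$ (separatrices of saddle points), shows $\eta$ is asymptotic to the two separatrices in $U$ while the companion lightlike geodesic $\eta'$ crosses them, and then swaps the roles of $\eta$ and $\eta'$. Likewise your ``one or two touching zeros'' in the nonnegative case is asserted rather than proved (nothing you wrote excludes zero, or three, such zeros); both counts again rest on the convexity lemma above and on the fact, obtained in the paper from the $K$-saturation of $\eta$, that $\alpha$ must actually attain the value $0$ on $\eta$ whenever $K$ is not periodic.
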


\begin{proof} Let $\eta\colon (t_{\inf{}},t_{\sup{}})\to C$ be a lightlike geodesic that contains a point $x$ such that $\alpha(x)>0$.  For any $t$, if $\alpha(\gamma(t))>0$ then $(\alpha\circ \gamma)'(t)\neq 0$ or 
$(\alpha\circ \gamma)''(t)>0$. Indeed, if  $(\alpha\circ \gamma)'(t)=0$ then the orbit of $K$ through $\gamma(t)$ is a geodesic and therefore, as $g$ is spacelike Zoll,  contains conjugate points. The curvature of $g$ 
being constant along this geodesic, it has to be positive and therefore $(\alpha\circ \gamma)''(t)>0$ (see Lemma 4.9 of \cite{Besse}). 

Clairaut's first integral tells us that along any spacelike geodesic $\gamma$  the value of $g(\dot \gamma,K)$ is constant. We denote it by $k_\gamma$.  Even if 
we impose $g(\dot \gamma,\dot \gamma)=1$, it  can be chosen as big as wanted by taking an initial speed at a point where $\alpha>0$ sufficiently close to a 
lightlike direction.  Moreover  if $K$ is tangent to $\gamma$  and if $g(\dot \gamma,\dot \gamma)=1$ then the value of $\alpha$ at this point is  equal to 
$k_\gamma^2$. Any spacelike geodesic being somewhere tangent to $K$, the function $\alpha$ is unbounded on $C$.

We suppose first that $K$ is periodic. The saturation of any lightlike geodesic $\eta$ by $K$ is equal to $C$. We have seen  in  the proof of Proposition 
\ref{prop_cases} that $\alpha$ has critical points. But, we just saw they are all local minima. It means that $\alpha$ has a minimum that is realized on a unique 
orbit of $K$ that we denote by $\gamma_0$. We choose $\eta(0)$ such that $\eta(0)\in \gamma_0$, i.e.\ such that it realizes the minimum of 
$\alpha$. The restriction of $\alpha\circ \eta$ to $]-\infty,0[$ and $]0,\infty[$ are strictly monotonous and, as any spacelike geodesic has to be tangent to $K$ on 
both side of $\gamma_0$, we see that  $\alpha\circ\gamma\rightarrow +\infty$  when $t$ goes to  $t_{\inf{}}$ or $t_{\sup{}}$.

We can assume now that $\alpha$ vanishes  somewhere (but maybe not $K$). Let $\eta$ be a lightlike geodesic that is transverse to $K$. 
Let us first suppose that $\alpha(x)>0$ for some $x\in \eta$. Let $V$ be the connected component of $\alpha^{-1}(]0,\infty[)$ that contains $x$. The vector field 
$K$ sends lightlike geodesics to lightlike geodesics and leaves $V$ invariant. On $V$ the vector field $K$ is transverse to any lightlike 
geodesic, thus the flow of $K$ defines  an open equivalence relation on the lightlike geodesics of $V$. Hence, $V$ is the saturation of $\eta\cap V$ by the flow of 
$K$.  Let $\gamma$ be a spacelike geodesic intersecting $V$. As the function $\alpha$ vanishes on $\partial V$ the restriction of $\alpha$ to $\gamma\cap V$ 
has a local maximum and therefore $\gamma$ has to be tangent to $K$ somewhere in $V$. Therefore $\alpha$ is unbounded on $V$. Any level set of 
$\alpha|_V$ intersects $\eta$  since the saturation of $\eta\cap V$ under $K$ is $V$. Since $\alpha|_\eta$ is monotonous, the restriction of $\alpha$ to 
$\eta\cap V$ goes to $+\infty$ at one end and to $0$ at the other end. 

Let us see now that this other end corresponds to an intersection between $\eta$ and a lightlike orbit of $K$. 
The function $\alpha$ is strictly monotonous on $\eta\cap V$, therefore for any $c>0$, $\alpha^{-1}(c)\cap V$ is equal to one orbit of $K$. Any spacelike geodesic is twice tangent to $K$, therefore the boundary of $V$ contains at least two non trivial lightlike orbits of $K$. 
We  choose  small lightlike transversal $\tau_1$ and $\tau_2$ along each of them such that $\alpha(\tau_1\cap V)=\alpha(\tau_2\cap V)$. As the level sets of $\alpha$ on $V$ are equal to orbits of $K$, there exists $t_1$ such that $\Phi_K^{t_1}(\tau_1)\cap \tau_2\neq \emptyset$. If $\tau_1$ and $\tau_2$ are pieces of leaves from the same lightlike foliation then $\Phi_K^{t_1}(\tau_1)\cap V=\tau_2\cap V$. But it would mean that $\tau_1$ and $\tau_2$ are transversals of the same lightlike orbit of $K$, contrary to our assumption. Hence we can assume that $\eta$ and the geodesic containing $\tau_1$ are leaves of the same foliation. Consequently there exists $t_2$ such that $\Phi_K^{t_2}(\tau_1)\cap V\subset \eta$ and therefore $\eta$ intersects a lightlike orbit of $K$.


In order to see that $\alpha$ goes to $+\infty$ we just have to prove that it takes positive values again. Let $x$ be a point of $\eta$ such that $\alpha(x)=0$. According to Proposition \ref{prop_cases}, the function $\alpha$ has to take non zero values again.
Let us suppose that there exists a point  $y\in \eta$ such that $\alpha(y)<0$. We choose a parametrization of $\eta$ starting from $y$ and  $3$ unit spacelike geodesics $\gamma_i$ starting also from $y$. The geodesic $\gamma_0$ is perpendicular to $K$ and the initial speeds satisfy 
$$
|g(\dot \gamma_1(0),\dot \eta(0))|<|g(\dot \gamma_0(0),\dot \eta(0))|<|g(\dot \gamma_2(0),\dot \eta(0))|.
$$
That is $\gamma_1$ is the closest to $\eta$. We remark that the roles of $\gamma_1$ and $\gamma_2$ are permuted if $\eta$ is replaced by $\eta'$ the other lightlike geodesic emanating from $y$.

As above we choose two numbers $a<0<b$ such that $\gamma_2$ is transverse to $K$ on $]a,b[$ and  
such that $K$ is tangent to $\gamma_2$ at the points $\gamma_2(a)$ and $\gamma_2(b)$. 
Let $U$ be the saturation by $K$ of $\gamma_2(]a,b[)$.
Let us see that each orbit of $K$ cuts at most once $\gamma_2(]a,b[)$. If $y\in \gamma_2(]a,b[)$ and $\Phi_K^{t_0}(y)\in \gamma_2(]a,b[)$ then, using Clairaut's first integral and the fact that a flow always preserves the orientation, we see that $\Phi_K^{t_0}(\gamma_2)=\gamma_2$. As the set of tangency points between $\gamma$ and $K$ is preserved by the flow of $K$ it follows that $\Phi_K^{t_0}(\gamma_2(]a,b[))=\gamma_2(]a,b[)$. 
If $t_0\neq 0$ then $K$ has closed orbits contrarily to our assumption. The map $(s,t)\mapsto \Phi_K^s(\gamma_2(t))$ therefore defines coordinates on $U$ such that metric reads $\alpha(t)ds^2+2dsdt+dt^2$ (in order to obtain a $2$ we may have to change $K$ by one of its multiples), 
with  $\alpha(0)<0$.
The open set $U$ contains  lightlike orbits of $K$. Let $c\in ]0,b[$ the smallest number such that  $\alpha(c)=0$. It corresponds to an orbit of $K$ that goes to a zero of $K$ (a separatrix). It implies that $\alpha'(c)>0$ (otherwise $D_KK(\gamma_2(c))=0$) therefore $\alpha'(t)>0$ for $t\geq c$. Doing the same for the biggest number $d\in]a,0[$ such that $\alpha(d)=0$, we see that $U$ contains exactly $2$ lightlike orbits of $K$ that are separatrices of saddle points. 

The intersection of $\gamma_0$ with $U$ is asymptotic to these lines therefore $\eta\cap U$ cannot cut any of them and it has also to be asymptotic to them in both direction. It implies that $\eta'$ cuts the two lightlike orbits of $K$ contained in $U$ (see figure \ref{fig_gammas}). Swapping the roles of $\eta$ and $\eta'$ we see that $\eta$  cuts the two lightlike orbits of $K$ contained in the open set $U'$ obtained by saturating a segment of $\gamma_1$. 

\begin{figure}[h!]
\labellist
\small\hair 2pt
\pinlabel {$\eta'$} at 350 230
\pinlabel {$\gamma_2$} at  405 230
\pinlabel  {$\gamma_0$} at 612 187
\pinlabel  {$\eta$} at 610 152
\endlabellist
 \centering
 \includegraphics[scale=.5]{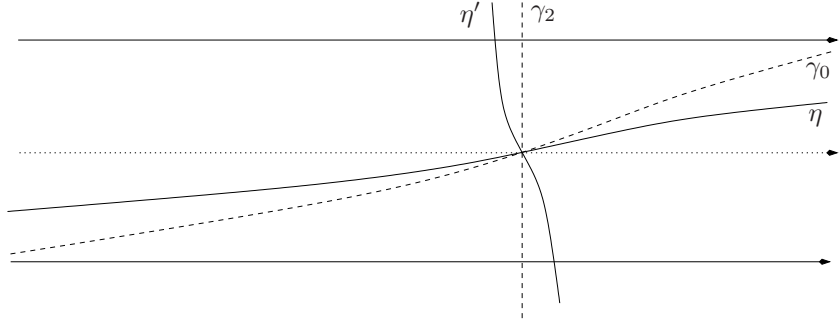}
 \caption{the positions of the curves  $\gamma_0$, $\gamma_2$, $\eta$ and $\eta'$ on $U$.}\label{fig_gammas}
\end{figure}

Thus there are points on $\eta$ on both side of $y$ where $\alpha$ takes positive values therefore $\alpha$ goes to infinity on both ends of $\eta$. \end{proof}

If $(C,g)$ is a spacelike Zoll surface with a Killing field $K$ then 
Proposition \ref{prop_llgeod} says that in the coordinates obtained by $K$-saturation of a lightlike geodesic the metric reads 
$h(y)dx^2+2dxdy$ with $h$ defined on $\R$ and $h\rightarrow +\infty$ when $y\rightarrow \pm \infty$. We can actually precise this fact:

\begin{prop}\label{prop_atlases}
 Let $(C,g)$ be a spacelike Zoll cylinder admitting a non zero Killing field $K$. 
 \begin{enumerate}
  \item when $K$ is periodic, then $(C,g)$ is the quotient by an horizontal translation, of a metric on $\R^2$ that reads $h(y)dx^2+2dxdy$ where 
  $h$ is  a positive function that has  a unique local minimum and verifies $\lim_{y\rightarrow\pm \infty}h(y)=+\infty$.
  \item when $K$ does not vanish and is not periodic, then there exists a finite atlas $\{(U_i,\psi_i), i\in \Z/2k\Z\}$ such that $\psi_i(U_i)=\R\times I_i$ and $\psi_i^{-1}{}^*g=h_i(y)dx^2+2dxdy$ where the $h_i$ are  non negative smooth functions, such that
  \begin{itemize}
  \item $h_i(0)=0$ 
  \item the $h_i$ are  strictly monotonous on $]-\infty,0[$ and on $]0,+\infty[$ 
  \item $\lim_{y\rightarrow \pm\infty} h_i(y)=+\infty$;
  \item $h_{2i}(t)=h_{2i+1}(t)$ for any $t> 0$ and $h_{2i}(t)=h_{2i-1}(t)$ for any $t< 0$;
  \end{itemize}
  \item when $K$ vanishes, then there exists a finite atlas $\{(U_i,\psi_i), i\in \Z/4k\Z\}$ of $C$ minus the set of zeros of $K$  such that $\psi_i(U_i)=\R\times I_i$ and $(\psi_i^{-1})^*g=h_i(y)dx^2+2dxdy$ where the $h_i$ are  smooth functions  such that:
  \begin{itemize}
   \item for any $i\in A$, there exists $a_i<0<b_i$ satisfying $h_i(a_i)=h_i(b_i)=0$, 
  \item $h_i$ is positive and strictly monotonous on $]-\infty,a_i[$ and on $]b_i,+\infty[$,
  \item $h_i$ is negative on $]a_i,b_i[$,
  \item $\lim_{y\rightarrow \pm\infty} h_i(y)=+\infty$;
  \item $h_{2i}(t)=h_{2i+1}(t-a_{2i}+a_{2i+1})$, for any $a_{2i}<t<b_{2i}$;
  \item $h_{2i}(t)=h_{2i+3}(t-a_{2i}+a_{2i+3})$, for any $t<a_{2i}$;
  \item $h_{2i-t}(t)=h_{2i}(t-b_{2i-1}+b_{2i})$, for any $t>b_{2i-1}$.
  \end{itemize}
 \end{enumerate}
\end{prop}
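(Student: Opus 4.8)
The plan is to produce every chart from a single local device and then to organize these charts globally. Given a lightlike geodesic $\eta$ transverse to $K$, I consider the saturation map $\Psi(s,t)=\Phi_K^s(\eta(t))$. Since each $\Phi_K^s$ is an isometry preserving $K$, and since $\eta$ is lightlike, one computes $g(\Psi_*\partial_s,\Psi_*\partial_s)=\alpha(\eta(t))$, $g(\Psi_*\partial_t,\Psi_*\partial_t)=g(\dot\eta,\dot\eta)=0$, and $g(\Psi_*\partial_s,\Psi_*\partial_t)=g(K,\dot\eta)$, which is constant along the geodesic $\eta$ by Clairaut's first integral; as $\eta$ is transverse to $K$ this constant is nonzero, and replacing $K$ by a suitable multiple normalizes it to $1$. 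Hence in the coordinates $(x,y)=(s,t)$ the metric takes the form
\[
g=h(y)\,dx^2+2\,dx\,dy,\qquad h(y)=\alpha(\eta(y)),
\]
which is exactly the normal form anticipated before the statement. The flow of $K$ is complete, so the $x$-coordinate ranges over $\R$ (or over $\R/L\Z$ when $K$ is periodic of period $L$), and all the qualitative properties required of $h$ are read off from Propositions \ref{prop_cases} and \ref{prop_llgeod}.

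For case (1), Proposition \ref{prop_cases} shows that a periodic $K$ is spacelike everywhere, so $\alpha>0$, and Proposition \ref{prop_llgeod} gives that $\alpha$ has a single minimal orbit, is strictly monotone on either side of it, and tends to $+\infty$ at both ends. Strict monotonicity makes $\eta$ meet every orbit of $K$ exactly once, so $\Psi\colon\R^2\to C$ is the universal covering and $(C,g)$ is the quotient of $(\R^2,\,h(y)\,dx^2+2\,dx\,dy)$ by the horizontal translation $x\mapsto x+L$, as claimed.

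For cases (2) and (3) a single transversal no longer charts $C$, and the work becomes global. By Proposition \ref{prop_cases}, $K$ has only finitely many lightlike orbits; in case (2) these are the loci $\{\alpha=0\}$ (here $\alpha\ge 0$, since $K$ is nowhere timelike), each crossed once by a transverse lightlike geodesic, while in case (3) the additional finitely many zeros of $K$ are saddles whose separatrices are lightlike orbits bounding the region $\{\alpha<0\}$, which a suitable transversal crosses twice. I would choose one transverse lightlike geodesic for each such crossing pattern, take its saturation chart, and normalize the $y$-origin so that the crossings sit at $y=0$ (case (2)) or at $y=a_i<0<b_i$ (case (3)); the sign, monotonicity, and $h_i\to+\infty$ conditions are then precisely Proposition \ref{prop_llgeod}. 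Two charts overlap only on components of $\{\alpha\neq 0\}$, and on such an overlap the transition preserves both lightlike foliations and the $K$-invariant function $\alpha$; it must therefore be a translation in $y$, and the translation is exactly the one aligning the common zero of $\alpha$, which produces the stated relations such as $h_{2i}(t)=h_{2i+1}(t-a_{2i}+a_{2i+1})$.

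I expect the genuine obstacle to be combinatorial and global rather than local: showing that finitely many such charts cover $C$ (minus the zeros), that they may be indexed cyclically by $\Z/2k\Z$ in case (2) and by $\Z/4k\Z$ in case (3) --- with the parity of the index recording which lightlike foliation carries the central orbit and, in case (3), on which side of a zero one sits --- and that consecutive charts overlap in exactly the claimed regions. Controlling this requires following how a transverse lightlike geodesic successively meets the finitely many lightlike orbits as one turns around the cylinder, using the asymptotics $\alpha\to+\infty$ from Proposition \ref{prop_llgeod} to guarantee that a transversal escapes to the ends between crossings (so that each chart sees exactly one crossing in case (2) and two in case (3)), and using that all spacelike geodesics close up to make the cyclic indexing consistent. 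Pinning down the exact multiplicities ($2k$ versus $4k$) and the precise shifts in the gluing maps is where the bulk of the bookkeeping lies.
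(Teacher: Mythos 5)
Your construction --- saturating a lightlike geodesic transverse to $K$ by the Killing flow, using Clairaut's integral to normalize $g(\dot\eta,K)=1$ and obtain the form $h(y)\,dx^2+2\,dx\,dy$ with $h=\alpha\circ\eta$, reading the qualitative properties of the $h_i$ off Propositions \ref{prop_cases} and \ref{prop_llgeod}, and deducing the gluing relations from the transitions being isometries preserving $K$ --- is essentially identical to the paper's proof, including the treatment of case (1) via the saturation in the universal cover. The combinatorial bookkeeping you defer at the end (finiteness of the atlas, cyclic indexing, exact shifts) is dispatched just as briefly in the paper: finiteness is quoted from Proposition \ref{prop_cases}, and the identities between the $h_i$ are simply stated to follow from the transition maps being isometries.
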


\begin{proof} 
The first case is a direct consequence of Proposition \ref{prop_llgeod} and the fact that, in this case, the saturation of any lightlike geodesic of the universal is the entire space. The interval $I$ corresponds to the interval of definition of the geodesic. 

Let us assume now that $K$ is not periodic. Let $\eta_1$ be a lightlike geodesic such that $g(\dot \eta_1,K)=1$. It follows from Proposition \ref{prop_llgeod} that the map $(s,t)\mapsto \Phi_K^t(\eta_1(s))$ is a diffeomorphism onto its image, that we denote $U_1$. In these coordinates the metric reads 
$h_1 dt^2+2dsdt$. Let $U_1^+$ be a connected component of $\alpha^{-1}(]0,+\infty[)\cap U_1$. 
Let $\eta_2$ be another geodesic such that $g(\dot\eta_2,K)=1$ and cutting $\eta_1$ at a point $p\in U_1^+$. We define $U_2$ and $h_2$ as above and $U_2^+$ as the connected component of  $\alpha^{-1}(]0,+\infty[)\cap U_2$ that contains $p$. It is easily verified that $\eta_2$ cuts all the leaves of $K$ contained in $U_1^+$ (cp. previous proof), therefore $U_1^+=U_2^+$ and the functions $h_1$ and $h_2$ coincide on $U_1^+$. It implies that the derivative of  $h_1$ on the boundary of $U_1^+$ in $U_1$ is equal to the derivative of  $h_2$ on the boundary of $U_2^+$ in $U_2$. According to Proposition \ref{prop_llgeod} it means that $h_1$ changes sign if and only if $h_2$ does. As any pair of points can be connected by a broken lightlike geodesic, it implies that if a function $h_i$ takes negative values they all do. 

The properties of the function $h_i$ are also given by Proposition \ref{prop_llgeod}. The fact that the atlas is finite is equivalent to the fact that $K$ has only a finite number of lightlike orbits and therefore follows from Proposition \ref{prop_cases}.  The identities between the $h_i$'s follow from the 
fact that the transition maps between the charts are isometries.
\end{proof}

Let us remark that Proposition \ref{prop_atlases} actually says that there are only three possible dynamics for Killing fields 
of spacelike Zoll cylinders, the three dynamics that appear on de Sitter space. The study thus splits in three cases that we will call elliptic, parabolic and hyperbolic in reference to the constant curvature case.
 In order to be able to determine when such metrics are indeed spacelike Zoll, we have made assumptions of the $h_i$  appearing Proposition \ref{prop_atlases}, see sections \ref{section_parabolic} and \ref{section_hyperbolic}.

\section{The Conformal Classes}\label{sec_conf}

In this section we prove the $C^0$-classification of the conformal classes of spacelike Zoll cylinders admitting a Killing vector field. 

\begin{defi}
Let $g, g'$ be Lorentzian metrics on a manifold $M$ and $\Phi\colon M\to M$ a homeomorphism. The application $\Phi$ is called a conformal homeomorphism 
if it maps $g$-lightlike geodesics to $g'$-lightlike geodesics up to parameterization. If such a $\Phi$ exists $(M,g)$ and $(M,g')$ are called $C^0$-conformal. 
\end{defi}

Note that for surfaces $C^0$-conformality is equivalent to the property that the lightlike foliations are mapped onto each other. Denote with $[g]$ the conformal class of the pseudo-Riemannian metric $g$.

\begin{thm}\label{thm_conf_killing}
Let $(C,g)$ be a spacelike Zoll cylinder with a non trivial Killing vector field $K$. Then $(C,g)$ is $C^0$-conformal to the $k$-fold cover of de Sitter space, 
where $2k$ is the number of intersection points  between any pair of distinct spacelike geodesics. Besides, if $K$ is periodic then $(C,g)$ is $C^\infty$-conformal 
to the $k$-fold cover of de Sitter space.
\end{thm}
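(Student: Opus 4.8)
The plan is to reduce $C^0$-conformality to the combinatorics of the two lightlike foliations $\mathcal F_{1},\mathcal F_{2}$ of $(C,g)$, read off from their behaviour on the conformal boundary. Since $(C,g)$ is globally hyperbolic by Proposition \ref{P1}, the lemma above together with Proposition \ref{P3} lets me view it conformally as a region $R$ of the flat cylinder $(S^1\times\R,d\varphi^2-dt^2)$ bounded by two disjoint simply closed non-timelike curves $\gamma^{+},\gamma^{-}$, its conformal boundary, with $\mathcal F_{1},\mathcal F_{2}$ the traces on $R$ of the two standard null directions. By Proposition \ref{prop_llgeod} the function $\alpha=g(K,K)$ tends to $+\infty$ at both ends of every lightlike geodesic transverse to $K$; this is exactly what forces each leaf to run from $\gamma^{-}$ to $\gamma^{+}$ and hence to possess two well-defined endpoints on the conformal boundary.

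On the circle $\gamma^{+}$ I then define the reflection map $\sigma$: from a point of $\gamma^{+}$ follow its $\mathcal F_{1}$-leaf down to $\gamma^{-}$ and the $\mathcal F_{2}$-leaf from that point back up to $\gamma^{+}$. This is an orientation preserving circle homeomorphism, and the conjugacy class of $\sigma$ is a complete invariant of the $C^0$-conformal type of a globally hyperbolic cylinder. The atlases of Proposition \ref{prop_atlases} (or the single global chart in the periodic case) compute $\sigma$ explicitly: in every chart the metric is $h_i(y)\,dx^{2}+2\,dx\,dy$, so $\mathcal F_{1}=\{x=\mathrm{const}\}$ while $\mathcal F_{2}$ integrates $\tfrac{dy}{dx}=-\tfrac{h_i}{2}$; following a leaf across consecutive charts and using the gluing identities between the $h_i$ shows that the flow of $K$ conjugates $\sigma$ to a rotation, of order equal to the number of charts divided by the number ($2$ or $4$) occurring for de Sitter itself. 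In other words $\sigma$ is conjugate to the reflection map of the $k$-fold cover of de Sitter, the rotation by $2\pi$ of a circle of circumference $2\pi k$.

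It remains to identify this covering degree with the constant of Proposition \ref{P2}. Here I would track a spacelike geodesic through the charts: Clairaut's first integral $g(\dot\gamma,K)=\mathrm{const}$ pins down precisely the tangency/crossing pattern already used in Proposition \ref{prop_llgeod} and shows that two distinct spacelike geodesics cross exactly twice within one period of $\sigma$; hence their total number of intersections is $2k$, so the order of $\sigma$ is half the intersection number, as claimed. A foliation preserving homeomorphism from $R$ to the $k$-fold cover is then produced by sending $\mathcal F_{1},\mathcal F_{2}$-leaves to the null leaves of the cover according to the conjugacy realizing $\sigma$ and filling in the interior by transport along leaves; this is the desired $C^0$-conformal map. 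This identification of the degree is the crux, since a $C^0$-conformal map does not preserve spacelike geodesics and the coincidence must be extracted from Clairaut's relation together with the structure of Proposition \ref{prop_atlases}; I expect it, and the claim that $\sigma$ is conjugate to a rotation, to be the main obstacles.

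For the refinement when $K$ is periodic, Proposition \ref{prop_atlases}(1) furnishes a single global smooth chart in which $g=h(y)\,dx^{2}+2\,dx\,dy=h\,dU\,dV$ with $U=x$ and $V=x+2H(y)$, where $H(y)=\int_{0}^{y}\tfrac{d\tau}{h(\tau)}$. Since $h>0$ is smooth and $h\to+\infty$, the map $H$ is a smooth diffeomorphism of $\R$ onto a bounded interval, boundedness being exactly the finite conformal height provided by Proposition \ref{P3}. The same computation for de Sitter ($h=\cosh^{2}$) yields null coordinates built from the Gudermannian function, again a smooth diffeomorphism onto a bounded interval. Matching these null coordinates by the equivariant product map that sends the horizontal translation of length $L$ to that of the $k$-fold cover and carries the bounded $V$-range smoothly onto de Sitter's upgrades the homeomorphism of the previous step to a diffeomorphism, giving $C^\infty$-conformality; the covering degree is the integer $k$ already determined above.
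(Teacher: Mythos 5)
Your architecture is in fact the same as the paper's: your reflection map $\sigma$ is exactly the map $(\theta^{-})^{-1}\circ\theta^{+}$ that the paper attaches to the two boundary graphs, and "conjugating $\sigma$ to a rotation" is the paper's Proposition \ref{prop_conf}. But the two steps you defer are genuine gaps rather than routine verifications, and they are precisely where the paper works hardest. First, for $\sigma$ to be a well-defined circle homeomorphism (and for its conjugacy class to be a meaningful invariant) you need the conformal boundary to contain \emph{no lightlike arcs}. Proposition \ref{prop_llgeod} gives $g(K,K)\to\infty$ along null geodesics, but this alone does not exclude a null segment in the boundary, on which the endpoint/reflection map degenerates (it collapses a whole family of leaves and fails to be injective or continuous). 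The paper proves this separately (Corollary \ref{cor_bdy_killing}): it extends $K$ conformally to the flat cylinder (Corollary \ref{C1}), observes that the boundary is a union of orbits of the extension, and rules out lightlike orbits by a causal-compactness argument that produces spacelike geodesic arcs of unbounded length, contradicting Wadsley's theorem. Nothing in your write-up substitutes for this.

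Second --- and you correctly flag this as the crux --- the periodicity of $\sigma$ and the identification of its order with half the intersection number cannot be extracted from Proposition \ref{prop_atlases} in the way you propose. That proposition constrains the functions $h_i$ but not the translation parameters hidden in the transition maps (and, when $K$ vanishes, its charts miss the zeros of $K$ altogether); it is exactly those parameters that decide whether ping-pongs close up. The paper's own examples show the danger: the constant-curvature cylinders with $\tau\neq 0$ (remark after Definition \ref{def_para}) carry a parabolic Killing field and the same local chart structure, yet are not Zoll, and their reflection map is not periodic. So periodicity of $\sigma$ must use the global Zoll condition, and the paper obtains it in Proposition \ref{P2.3}: a lightlike geodesic is a $C^0$-limit of closed spacelike geodesics, each meeting a fixed spacelike geodesic in $2k$ points, so the limit is a closed curve with exactly $2k$ null sides; the possibility that the limit also contains boundary arcs is excluded by another length estimate against Wadsley's theorem. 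Your Clairaut-in-charts plan would amount to redoing, without any normalization, the closing-up analysis of Sections \ref{section_parabolic}--\ref{section_hyperbolic}, which in the paper requires extra hypotheses (parabolic or hyperbolic atlases) not available for a general Killing field. The same gap resurfaces in your periodic case: the "equivariant product map" between the null strips exists only if the conformal modulus matches, i.e.\ only if $2\int_{-\infty}^{\infty}\frac{dy}{h(y)}=L/k$ with $L$ the period of the horizontal translation; this identity is equivalent to the $k$-ping-pong property, needs the Zoll condition, and does not follow from mere boundedness of the $V$-range.
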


The proof will be given at the end of the section.
In general the $C^0$-conformality cannot be improved to $C^2$-conformality  as Theorem \ref{theo_nonsmoothconformal} shows.

\begin{prop}\label{prop_extend}
Let $(C,g)$ be a globally hyperbolic spacetime admitting a conformal embedding $F\colon(C,g)\to (S^1\times \R,d\varphi^2-dt^2)$. Assume that there exists a 
conformal homeomorphism $\Phi\colon (C,[g])\to (C,[g])$ that leaves each lightlike foliation of $(C,g)$ invariant. Then $F\circ \Phi \circ F^{-1}$ has a unique 
extension as a conformal homeomorphism of $(S^1\times \R,d\varphi^2-dt^2)$. If furthermore $\Phi$ is a $C^n$-diffeomorphism, so will be the extension.
\end{prop}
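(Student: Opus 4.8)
The plan is to work in null coordinates on the flat cylinder and to exploit the fact that a conformal homeomorphism preserving each lightlike foliation is nothing but a pair of homeomorphisms of the two leaf spaces; the extension will then be forced by the observation that $F(C)$ already meets every lightlike leaf.

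First I would lift $(S^1\times\R,d\varphi^2-dt^2)$ to its universal cover $\R^2_{(\varphi,t)}$ and introduce the null coordinates $u=\varphi+t$ and $v=\varphi-t$, so that the two lightlike foliations become the families $\{u=\mathrm{const}\}$ and $\{v=\mathrm{const}\}$ and the deck transformation generating $\pi_1(S^1\times\R)$ is $T\colon(u,v)\mapsto(u+2\pi,v+2\pi)$. A conformal homeomorphism of the flat cylinder preserving each foliation lifts to a homeomorphism of $\R^2$ carrying each line $\{u=c\}$ to a line $\{u=a(c)\}$ and each line $\{v=c\}$ to a line $\{v=b(c)\}$; hence it is ``diagonal'', of the form $(u,v)\mapsto(a(u),b(v))$ for strictly monotone homeomorphisms $a,b$ of $\R$, and $T$-equivariance forces $a(u+2\pi)=a(u)+2\pi m$ and $b(v+2\pi)=b(v)+2\pi m$ for a common integer $m$ (necessarily $m=\pm1$, since one lifts a homeomorphism of the cylinder). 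Conversely, any such pair $(a,b)$ with a common shift descends to a conformal homeomorphism of the cylinder.

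The geometric input, and the step I expect to be the main obstacle, is that $\Omega:=F(C)$ meets every lightlike leaf of the flat cylinder in a single nonempty arc. Since $(C,g)$ is globally hyperbolic, the discussion following Proposition \ref{P3} shows that $\partial\Omega$ consists of two disjoint simply closed non timelike curves $\gamma^\pm$, each expressible as a graph $t=f^\pm(\varphi)$ with $|{f^\pm}'|\le 1$. A lightlike leaf of, say, the first family is a graph $\varphi=t+c$ that is strictly monotone in the height $t$; comparing it with $\gamma^\pm$ one finds that $t-f^\pm(t+c)$ is weakly monotone in $t$ and runs from $-\infty$ to $+\infty$, so the leaf crosses each of $\gamma^-,\gamma^+$ exactly once and therefore meets the region $\Omega$ between them in a single arc. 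The same holds for the second family. Consequently the projections of $\widetilde\Omega$, the lift of $\Omega$, to the $u$- and $v$-axes are onto $\R$.

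Now set $\psi:=F\circ\Phi\circ F^{-1}$, a conformal self-homeomorphism of $\Omega$ preserving each foliation, and let $\widetilde\psi$ be a lift to $\widetilde\Omega$. By the second paragraph $\widetilde\psi(u,v)=(a(u),b(v))$ there, and by the third paragraph $a$ and $b$ are already defined, monotone and surjective on all of $\R$, with $a(u+2\pi)=a(u)+2\pi m$ and $b(v+2\pi)=b(v)+2\pi m$ inherited from the $T$-equivariance of $\widetilde\psi$. I would then define the extension on all of $\R^2$ by the very same formula $(u,v)\mapsto(a(u),b(v))$: it is a foliation-preserving homeomorphism, it is $T$-equivariant, and hence descends to a conformal homeomorphism of $(S^1\times\R,d\varphi^2-dt^2)$ restricting to $\psi$ on $\Omega$; transporting back by $F$ gives the desired extension of $F\circ\Phi\circ F^{-1}$. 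Uniqueness is immediate, since any conformal extension must again carry $u$-leaves to $u$-leaves and $v$-leaves to $v$-leaves, hence be given by the same $a,b$, which $\psi$ already determines on all of $\R$. Finally, if $\Phi$ is a $C^n$-diffeomorphism then so is $\psi$, whence $a=u\circ\widetilde\psi$ and $b=v\circ\widetilde\psi$ are $C^n$ (each being constant along the complementary foliation), and the extension $(a(u),b(v))$ is $C^n$ as well.
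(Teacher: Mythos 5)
Your proof is correct and follows essentially the same route as the paper's: lift to the universal cover in null coordinates, observe that a foliation-preserving conformal homeomorphism must be diagonal, $(u,v)\mapsto(a(u),b(v))$, and extend by the same formula, the factor maps being already defined, monotone and equivariant on all of $\R$ because every lightlike line meets the lifted image in a single nonempty arc. The only difference is that you justify this last geometric fact from the Lipschitz-graph structure of the boundary, whereas the paper asserts it directly from global hyperbolicity; that is added detail rather than a different approach.
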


\begin{proof}
Since we are interested in the conformal structure only, we can assume from the very beginning that $C$ is an open subset of $S^1\times \R$ bounded by 
possible none, one or two simply closed non timelike loops. 

Consider $(S^1\times \R,d\varphi^2-dt^2)$ as the quotient of $(\R^2,dxdy)$ by the $\Z$-action generated by $(x,y)\mapsto (x+\sqrt{2}\pi,y+\sqrt{2}\pi)$. Lift 
everything to $\R^2$ and denote the lift of $\Phi$ with $\widetilde\Phi$. Since $\widetilde\Phi$ maps horizontal lines to horizontal lines and vertical lines to
vertical lines, we see that $\widetilde\Phi(x,y)=(\widetilde\Phi_1(x),\widetilde\Phi_2(y))$. 
By the assumption that $(C,g)$ is globally hyperbolic the intersection of any lightlike line in $(\R^2,dxdy)$ with $\widetilde{C}$ is an interval. This implies that 
the maps $\widetilde\Phi_1(x)=x\circ \widetilde\Phi$ and $\widetilde\Phi_2(y)=y\circ\widetilde\Phi$ are well defined. Since $x(\widetilde C), y(\widetilde C)=\R$, 
we can define the extension of $\widetilde\Phi$ denoted by $\widetilde\Phi_e$ to $\R^2$ by setting $\widetilde \Phi_e (x,y):=(\widetilde\Phi_1(x),
\widetilde\Phi_2(y))$. This extension is unique if we impose the condition of conformality on the extension. Since $\widetilde\Phi_1$ and $\widetilde\Phi_2$ are 
equivariant under the deck transformation group of $\R^2$ over $S^1\times\R$ described above, $\widetilde\Phi_e$ descends to a conformal homeomorphism of 
$(S^1\times\R, d\varphi^2-dt^2)$. 
\end{proof}

\begin{cor}\label{C1}
If $K$ is a smooth conformal vector field on a globally hyperbolic cylinder $(C,g)$, then for every smooth conformal embedding $F\colon (C,g)\to (S^1\times \R,d\varphi^2-dt^2)$ 
there is a unique smooth extension $\overline K$ of $F_* K$ to a smooth conformal vector field of $(S^1\times \R,d\varphi^2-dt^2)$.
\end{cor}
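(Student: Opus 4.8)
The statement is the infinitesimal counterpart of Proposition \ref{prop_extend}, and the plan is to prove it by the same mechanism, applied to the generator $K$ directly rather than to a flow. Working with $K$ itself is preferable here, since a conformal field on a globally hyperbolic cylinder need not be complete, so the flow $\Phi_K^t$ need not be a globally defined homeomorphism of $C$ to which Proposition \ref{prop_extend} could be applied for fixed $t\neq 0$. First note that, since $F$ is a conformal embedding and $K$ is conformal for $g$, the pushforward $F_*K$ is a conformal vector field for $d\varphi^2-dt^2$ on the open set $F(C)$: being a conformal vector field is invariant under conformal rescaling of the metric. As in the proof of Proposition \ref{prop_extend}, I realize $(S^1\times\R, d\varphi^2-dt^2)$ as the quotient of $(\R^2, dxdy)$ by the translation $(x,y)\mapsto(x+\sqrt2\pi, y+\sqrt2\pi)$, and lift $F_*K$ to a deck-equivariant conformal vector field $\widetilde X$ on $\widetilde{F(C)}\subset\R^2$.

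The first step is to solve the conformal Killing equation in the null coordinates $(x,y)$. Writing $\widetilde X = P\,\partial_x + Q\,\partial_y$, the equations $(\mathcal{L}_{\widetilde X}(dxdy))_{xx}=0$ and $(\mathcal{L}_{\widetilde X}(dxdy))_{yy}=0$ reduce to $\partial_x Q = 0$ and $\partial_y P = 0$, so that $P=P(x)$ and $Q=Q(y)$ each depend on a single null coordinate. The second, and key, step is exactly the argument of Proposition \ref{prop_extend}: because $(C,g)$ is globally hyperbolic, every horizontal and every vertical line meets $\widetilde{F(C)}$ in an interval, and the coordinate projections satisfy $x(\widetilde{F(C)}) = y(\widetilde{F(C)}) = \R$; hence $P$ and $Q$ are well-defined smooth functions on all of $\R$. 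Equivariance of $F_*K$ under the deck group forces $P$ and $Q$ to be $\sqrt2\pi$-periodic, so $\widetilde X_e := P(x)\partial_x + Q(y)\partial_y$ on $\R^2$ descends to a smooth vector field $\overline K$ on the whole cylinder.

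It then remains to check that $\overline K$ has the required properties. It is conformal because $\widetilde X_e$ has the split form $P(x)\partial_x + Q(y)\partial_y$, which solves the conformal Killing equation for $dxdy$ on all of $\R^2$; it extends $F_*K$ by construction; and it is smooth. Uniqueness follows as in Proposition \ref{prop_extend}: any smooth conformal extension is again of the form $P'(x)\partial_x + Q'(y)\partial_y$, and $P', Q'$ are already determined on the full ranges $x(\widetilde{F(C)})=y(\widetilde{F(C)})=\R$ by the values of $F_*K$, whence $P'=P$ and $Q'=Q$. The one point requiring care — the analogue of the main technical content of Proposition \ref{prop_extend} — is the passage from $P, Q$ being defined locally on $\widetilde{F(C)}$ to being globally well-defined smooth functions on $\R$; this is precisely where global hyperbolicity (connectedness of the null slices) and surjectivity of the coordinate projections are both essential.
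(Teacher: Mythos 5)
Your proof is correct, and it follows the same overall architecture as the paper's: establish that the lift of $F_*K$ to $(\R^2,dxdy)$ splits as $P(x)\partial_x+Q(y)\partial_y$, extend $P$ and $Q$ along the null lines using that global hyperbolicity makes every horizontal and vertical slice of the lifted image a nonempty interval while the coordinate projections are onto $\R$, and then descend by deck equivariance. Where you differ is in how the split form is obtained: the paper reuses the observation from the proof of Proposition \ref{prop_extend} that the \emph{local flow} of the lift preserves each lightlike foliation and hence has the form $\Phi_t(x,y)=(\Phi_{1,t}(x),\Phi_{2,t}(y))$, and then differentiates at $t=0$; you instead solve the conformal Killing equation $\mathcal{L}_{\widetilde X}(dxdy)\propto dxdy$ directly in null coordinates, reading off $\partial_yP=\partial_xQ=0$ from the vanishing of the $dx^2$ and $dy^2$ components. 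Your infinitesimal route is self-contained and sidesteps any discussion of flows altogether (note, though, that your stated worry about incompleteness is not an obstruction to the paper's argument, since it only invokes the \emph{local} flow, which always exists); the paper's route is shorter given that the flow-splitting is already in hand from the previous proof. You also spell out uniqueness explicitly (any conformal extension must itself have split form, and its components are determined on the full ranges $x(\widetilde{F(C)})=y(\widetilde{F(C)})=\R$), which the paper's proof leaves implicit, so your write-up is, if anything, slightly more complete.
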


\begin{proof}
We have seen in the previous proof that the local flow of the lift of $F_*K$ to the universal cover $(\R^2,dxdy)$ has the form $\Phi_t(x,y)=(\Phi_{1,t}(x),\Phi_{2,t}(y))$. 
This implies that the lift of $F_*K$ has the form $(K_1(x),K_2(y))$. Since the intersection of every horizontal and vertical line with the lift of $F(C)$ is non empty and 
connected, we can extend the functions $K_1$ and $K_2$ to $\R^2$ by setting $K_{(x,y)}=(K_1,K_2)$ where $K_1$ is the value of the $x$-part of $K$ on the 
intersection of the vertical line through $(x,y)$ with the lift of $F(C)$ and $K_2$ is the respective value on the intersection of the horizontal line with the lift of $F(C)$.
Since the lift of $F_*K$ is invariant under the group of deck transformations, it is now obvious that the constructed vector field induces a smooth conformal vector field 
on $(S^1\times \R,d\varphi^2-dt^2)$.
\end{proof}

\begin{cor}\label{cor_bdy_killing}
If $(C,g)$ is spacelike Zoll and admits a nontrivial Killing vector field $K$, then the conformal boundary is piecewise smooth and spacelike. If $K$ has no lightlike 
leaves then the boundary is spacelike and smooth.
\end{cor}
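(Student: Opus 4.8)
The plan is to transport everything to the flat cylinder via the conformal embedding $F$ of Proposition \ref{P3} and to exploit the fact that, by Corollary \ref{C1}, the Killing field $K$ extends to a conformal field $\overline K$ on all of $(S^1\times\R, d\varphi^2 - dt^2)$. Working in the lightlike coordinates $(x,y)$ of the universal cover $(\R^2, dxdy)$ (with deck group generated by $(x,y)\mapsto(x+\sqrt2\pi, y+\sqrt2\pi)$), the extended field takes the split form $\overline K = K_1(x)\partial_x + K_2(y)\partial_y$, and a tangent vector $a\partial_x + b\partial_y$ is spacelike, lightlike or timelike according to the sign of $ab$. The conformal boundary $\partial F(C) = \gamma^+\cup\gamma^-$ consists of two disjoint simply closed non-timelike curves (Proposition \ref{P3}), and since the flow of $\overline K$ preserves $F(C)$ it preserves each $\gamma^\pm$; thus each boundary curve is a union of orbits and zeros of $\overline K$.

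The first step is to describe the orbits of $\overline K$ lying on the boundary. Writing $\gamma^+$ as a graph $y = \theta^+(x)$, invariance under $\overline K$ gives the relation $(\theta^+)'(x) = K_2(\theta^+(x))/K_1(x)$ wherever $K_1(x)\neq 0$. Achronality forces $(\theta^+)'\ge 0$, equivalently $K_1 K_2 \ge 0$ along the boundary; on the open set where both $K_1$ and $K_2$ are nonzero this gives $K_1 K_2 > 0$, so the boundary is there a smooth spacelike arc (a genuine orbit of the smooth field $\overline K$). The only points at which smoothness or the spacelike character can fail are therefore the points where $K_1(x)=0$ (a vertical lightlike tangent) or $K_2(\theta^+(x))=0$ (a horizontal lightlike tangent), together with the zeros of $\overline K$ where $K_1=K_2=0$ (relevant only in the case where $K$ vanishes, Proposition \ref{prop_atlases}(3)).

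The second step is to show these exceptional points are finite in number. Each zero $x_0$ of $K_1$ makes the vertical line $\{x=x_0\}$ a lightlike leaf of $\overline K$; as this line meets the strip $F(C)$ (a region projecting onto the whole $x$-axis), it is a genuine lightlike leaf of $K$ on $C$, and Proposition \ref{prop_cases} guarantees that there are only finitely many such leaves modulo the deck group; the same applies to zeros of $K_2$. (Note that $K_1$ and $K_2$ cannot vanish identically, since otherwise $\overline K$, hence $K$, would be everywhere lightlike, contradicting the fact that $K$ is spacelike somewhere by Proposition \ref{prop_cases}.) Consequently $\gamma^+$, and likewise $\gamma^-$, is cut by these finitely many lightlike leaves into finitely many smooth spacelike arcs: the boundary is piecewise smooth and spacelike.

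Finally, if $K$ has no lightlike leaves then $K_1$ and $K_2$ have no zeros meeting $F(C)$, so $K_1 K_2 > 0$ holds all along the boundary without exception; then each $\gamma^\pm$ is a single smooth orbit of the nowhere-vanishing field $\overline K$ with everywhere spacelike tangent, yielding a smooth spacelike boundary. The main point requiring care is the identification of the non-smooth boundary points with lightlike leaves of $K$ and the verification that there are finitely many of them; in particular, one must check in the vanishing case (Proposition \ref{prop_atlases}(3)) that the separatrices and saddle zeros of $\overline K$ reaching the boundary account for all the corners, and that their finiteness again follows from Proposition \ref{prop_cases}.
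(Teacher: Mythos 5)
There is a genuine gap: your argument establishes that the boundary is \emph{piecewise smooth}, but it never establishes that the pieces are \emph{spacelike}, because it does not exclude the possibility that the boundary contains entire lightlike orbits of the extension $\overline K$. In the null coordinates $(x,y)$ these would be horizontal segments lying in a line $\{y=y_0\}$ with $K_2(y_0)=0$ (or vertical segments in $\{x=x_0\}$ with $K_1(x_0)=0$, in which case your graph parametrization $y=\theta^+(x)$ already breaks down). Your finiteness step only shows that $K_1$ and $K_2$ have finitely many zeros modulo the deck group; it does not show that the set $\{x:\ K_2(\theta^+(x))=0\}$ is finite. Since $\theta^+$ is merely non-decreasing, the preimage of a single zero $y_0$ of $K_2$ can be a whole interval --- precisely a horizontal lightlike piece of the boundary --- and nothing in your argument forbids this. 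Achronality only gives $(\theta^+)'\geq 0$, which is perfectly compatible with such flats, and for a general globally hyperbolic cylinder (e.g.\ a lightlike diamond) the conformal boundary genuinely can be lightlike; ruling this out for spacelike Zoll surfaces is exactly the content of the corollary, not a formality.

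This exclusion is the heart of the paper's proof and requires a dynamical/length argument, not just bookkeeping of zeros. The paper assumes a lightlike orbit $I$ of $K$ lies in the boundary, takes a lightlike geodesic $\eta$ asymptotic to a point of $I$, and notes by Proposition \ref{prop_llgeod} that $g(K,K)\to\infty$ along $\eta$. Reversing the metric and lifting to the universal cover, the lightlike character of the boundary forces the causal diamonds $J^+(x)\cap J^-\bigl(\Phi^1_{\widetilde K}(x)\bigr)$ of $(\widetilde C,-\widetilde g)$ to be compact for $x$ on $\widetilde\eta$ close to the boundary; hence $x$ and $\Phi^1_{\widetilde K}(x)$ are joined by a maximal $(-g)$-timelike, i.e.\ $g$-spacelike, geodesic of length at least $\sqrt{g(K,K)}(x)$. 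Letting $x\to\partial C$ produces arbitrarily long embedded spacelike geodesic arcs, contradicting Wadsley's theorem (periodicity of the geodesic flow bounds the lengths of spacelike geodesics). Without this step, or a substitute for it, your proof only yields that the boundary is a finite union of smooth spacelike arcs, lightlike segments and corners, which is strictly weaker than the statement of Corollary \ref{cor_bdy_killing}; note also that the proof of Proposition \ref{P2.3} relies on the spacelikeness you have not proven, so it cannot be invoked to repair the argument.
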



\begin{proof}
By Corollary \ref{C1} the Killing field $K$ admits a unique conformal extension to $S^1\times \R$ for every conformal embedding. Since the image of $C$ is 
invariant under the flow of the extension, so is the conformal boundary. Therefore the conformal boundary consists of non timelike orbits of the extension since 
$(C,g)$ is globally hyperbolic. By Proposition \ref{prop_cases} $K$ has only finitely many lightlike orbits. Therefore the conformal boundary contains only finitely 
many singularities of the extension, i.e.\ the common limit of lightlike orbits.  The rest consists of spacelike or lightlike orbits. 

We want to exclude the lightlike case. So assume that there is a lightlike orbit of $K$ in the boundary of $C$. We denote it by $I$. Since the boundary is 
invariant under the flow of $K$ the entire lightlike orbit of $K$ is contained in the boundary. 
Let $\eta$ be a lightlike geodesic asymptotic to a point in $I$. By Proposition \ref{prop_cases} we know that $g(K,K)\to\infty$ as $\eta$ approaches the boundary.
Especially $K$ will be spacelike near the boundary. We will now consider $\eta$ only near $I$. Note that since $C$ is $2$-dimensional $-g$ is Lorentzian again. 
Further $(C,-g)$ is time orientable by Lemma \ref{lem_orient}. Time orient $(C,-g)$ such that $K$ is future pointing on $\eta$. Lift everything to the universal cover
$(\widetilde{C},-\widetilde{g})$. Now denote with $J^+(y)$ and $J^-(y)$ the causal future and past respectively of $y\in \widetilde{C}$ relative to $-\widetilde g$ 
with the lifted time orientation. Since the lifted boundary is lightlike as well we see, e.g. by considering the situation in a conformal embedding into 
$(\R^2, -dxdy)$, that for points $x$ on $\widetilde \eta$ sufficiently close to the boundary the set $J^+(x)\cap  J^-(\Phi^{1}_{\widetilde{K}}(x))$ is compact  in 
$\widetilde C$, where $\Phi_{\widetilde{K}}$ denotes the flow of lifted Killing field $\widetilde K$. Recall that every Lorentzian metric on $\widetilde{C}$ is causal. 
It is well known that these two properties imply that the set of future pointing causal curves, modulo reparameterizations, from $x$ to $\Phi^{1}_{\widetilde{K}}(x)$ 
is compact in the space of causal paths of $(\widetilde{C},-\widetilde{g})$ (Proposition 8.7 in \cite{Beem-Ehrlich}). Therefore $x$ and $\Phi^1_K(x)$ are 
connected by a maximal $-g$-timelike (i.e.\ $g$-spacelike) geodesic of $g$-length at least $\int_{0}^1\sqrt{g(K,K)}= \sqrt{g(K,K)}(x)$. The right hand side 
diverges as $x\to\partial C$, thus showing that $(C,g)$ contains arbitrarily long non selfintersecting spacelike geodesic arcs. This contradicts Wadsley's Theorem
(cp. the last argument in the proof of Proposition \ref{prop_cases}).
\end{proof}

%
%
%
%

\begin{defi}
\begin{itemize}
\item[(a)] Let $(C,g)$ be  globally hyperbolic cylinder and $F\colon (C,g)\to (S^1\times \R,d\varphi^2-dt^2)$ a conformal embedding. 
A ping-pong in $(\overline{F(C)},d\varphi^2-dt^2)$ is a piecewise smooth  simply closed lightlike loop with vertices on the boundary.
\item[(b)] Let $k\in\N$. A globally hyperbolic cylinder $(C,g)$ has the $k$-ping-pong-property ($k$-PPP) if every lightlike geodesic 
of $(C,g)$ lies on a ping-pong and  every ping-pong has exactly $2k$ vertices.
\end{itemize}
\end{defi}

\begin{rema}
Ping-pongs can only exist in conformally compact globally hyperbolic cylinders.  Further, ping-pongs are invariant under conformal homeomorphisms, i.e.\ the definition is independent of the conformal embedding $F$. 


It is clear from the construction of the conformal class of de Sitter that the $k$-fold cover of de Sitter has the $k$-PPP.
\end{rema}

The next Proposition is the first step in the proof of Theorem \ref{thm_conf_killing}.

\begin{prop}\label{P2.3}
Let $(C,g)$ be a spacelike Zoll cylinder admitting a non trivial Killing vector field. Then $(C,g)$ has the $k$-PPP where $2k$ is the number of intersections
of any pair of spacelike geodesics. 
\end{prop}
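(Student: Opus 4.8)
The plan is to work inside the conformal compactification furnished by Proposition \ref{P3} and to reduce the statement to a transversal intersection count that is governed by Proposition \ref{P2}. Fix a smooth conformal embedding $F\colon(C,g)\to(S^1\times\R,d\varphi^2-dt^2)$. By Corollary \ref{cor_bdy_killing} the two boundary curves $\gamma^\pm$ of $\overline{F(C)}$ are spacelike (with finitely many vertices coming from the lightlike orbits of $K$), and being spacelike and simply closed they are graphs $t=\theta^\pm(\varphi)$ with $|(\theta^\pm)'|<1$. Likewise every spacelike geodesic, and in particular a fixed one $\gamma_0$ with $\theta^-<\theta_0<\theta^+$, maps to such a graph winding once. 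A lightlike geodesic maps to a line of slope $\pm1$; since the boundary is spacelike the line cannot be tangent to it, and reflecting at each boundary hit (which switches the two lightlike foliations and sends a segment from one boundary component to the opposite one) produces the billiard path whose closed orbits are the ping-pongs. First I would record that any such ping-pong $P$ is strictly monotone in $\varphi$ when the loop is traversed coherently (both $+1$ and $-1$ segments increase $\varphi$), hence a graph that is simply closed and, since its segments join opposite boundary components, winds exactly once, with its $2m$ vertices alternating on $\gamma^+$ and $\gamma^-$.

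The elementary but decisive observation is a crossing count. On each of the $2m$ segments, $P$ runs monotonically from $\theta^+$ to $\theta^-$ (or back) with slope $\pm1$, while $\theta_0$ has slope in $(-1,1)$ and lies strictly between $\theta^-$ and $\theta^+$. Hence $P-\theta_0$ is strictly monotone on each segment and changes sign exactly once there, and it never vanishes at a vertex (where $P=\theta^\pm\ne\theta_0$). Therefore $P$ meets $\gamma_0$ transversally in exactly $2m$ interior points; that is, the number of vertices of $P$ equals the number of intersections of $P$ with $\gamma_0$.

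It remains to identify this number with $2k$, and for this I would degenerate spacelike geodesics onto ping-pongs. Given a lightlike geodesic $\eta$, choose $p\in\eta$ and unit spacelike geodesics $\gamma_s$ through $p$ whose directions tend to $\dot\eta(p)$. By the Zoll hypothesis all $\gamma_s$ are simply closed of one fixed length, and by Clairaut's first integral (used as in the proof of Proposition \ref{prop_llgeod}) each $\gamma_s$ is tangent to $K$ precisely where $\alpha=k_{\gamma_s}^2$; since $k_{\gamma_s}\to\infty$ while $\alpha\to+\infty$ only at the conformal boundary, every tangency point of $\gamma_s$ escapes to $\partial F(C)$. Consequently the $\gamma_s$, being graphs of fixed length that follow $\eta$ and can only turn near the boundary, converge to a closed piecewise lightlike loop through $\eta$ with vertices on $\partial F(C)$: this is the ping-pong $P$ carrying $\eta$, which proves that every lightlike geodesic lies on a ping-pong. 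As the reflection law makes $P$ the unique ping-pong through $\eta$, every ping-pong arises in this way.

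Finally, choosing $\gamma_0$ geometrically distinct from the $\gamma_s$, Proposition \ref{P2} gives $\#(\gamma_s\cap\gamma_0)=2k$ for every $s$. These $2k$ intersections are transversal and interior, and $P$ meets $\gamma_0$ transversally in interior points only; because $\gamma_s\to P$ uniformly as graphs and all these crossings are simple, no crossing is created or lost in the limit, so $\#(P\cap\gamma_0)=2k$. Combined with the crossing count of the second step this yields $2m=2k$, so every ping-pong has exactly $2k$ vertices and $(C,g)$ has the $k$-PPP. The main obstacle is the degeneration step: one must show rigorously that the fixed-length spacelike geodesics $\gamma_s$ converge to the reflecting lightlike billiard path and that their turning points accumulate precisely on the conformal boundary. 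The boundedness of length (Zoll), together with the identity $\alpha=k_{\gamma_s}^2$ at tangencies to $K$ and the blow-up of $\alpha$ at the boundary (Propositions \ref{prop_cases} and \ref{prop_llgeod}), are exactly what make this control possible.
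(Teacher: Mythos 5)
Your strategy is in fact the paper's own: conformally embed, realize a given lightlike geodesic as a uniform (Arzela--Ascoli) limit of closed spacelike geodesics viewed as $1$-Lipschitz graphs, and transfer the intersection number $2k$ from Proposition \ref{P2}, computed against a fixed spacelike geodesic $\gamma_0$, to the limit loop in order to count its sides. Your crossing count (each lightlike side joins the two boundary components, hence meets the spacelike graph $\gamma_0$ exactly once, so the number of sides equals $\#(P\cap\gamma_0)$) is correct and is implicitly the paper's count as well. But there is a genuine gap, and you name it yourself without closing it: nothing in the proposal proves that the limit curve meets $\partial F(C)$ only in isolated points. A priori the limit is just a closed achronal loop which, inside $C$, is a union of lightlike pregeodesics, together with some closed subset of the boundary; what must be excluded is that this subset contains an \emph{arc} of a spacelike boundary orbit of the conformal extension of $K$. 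Your heuristic --- tangencies to $K$ escape to the boundary because $\alpha=k_{\gamma_s}^2\to\infty$, so the curves ``can only turn near the boundary'' --- does not exclude this; it rather suggests it. Near a tangency to $K$ the geodesic runs nearly parallel to a $K$-orbit, i.e.\ nearly parallel to the boundary, so the dangerous scenario is precisely that the $\gamma_s$ hug the boundary over a definite length and produce a boundary arc in the limit.

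This is where the paper does its real work, and what is missing from your proposal. The paper introduces coordinates $(w,z)$ near a piece of spacelike boundary orbit with $\partial_w=K$, uses Proposition \ref{prop_atlases} to show that $g(K,K)|_{\gamma_n}$ has at most one critical point there, deduces that on most of the parameter interval the slope of $\gamma_n$ is small, and converts this into a lower bound of the form $L^g(\gamma_n)\ge c\,(w_1-w_0)\inf_U\sqrt{g(K,K)}$. Since $\inf_U\sqrt{g(K,K)}\to\infty$ as $U$ shrinks to the boundary (again by Proposition \ref{prop_atlases}), this contradicts the fact, coming from Wadsley's theorem, that all closed spacelike geodesics have the same length. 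Your closing sentence lists exactly these ingredients (fixed length, $\alpha=k_{\gamma_s}^2$ at tangencies, blow-up of $\alpha$) but never assembles them into an estimate; the decisive step is asserted, not proved. Two smaller points: the Clairaut argument $k_{\gamma_s}\to\infty$ fails when $\eta$ is one of the finitely many lightlike orbits of $K$ --- there the Clairaut constants of unit spacelike geodesics with directions tending to $\dot\eta$ go to $0$, not $\infty$ --- so these geodesics need separate treatment (the paper's argument is uniform in $\eta$ and does not use Clairaut at all); and ``no crossing is created or lost in the limit'' requires the separation argument via the injectivity radius along $\gamma_0$ that the paper gives, since uniform convergence alone does not prevent crossings from merging.
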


Note that finite covers of de Sitter show that every $k\in\N$ appears. 

\begin{lem}\label{lem_pp_unique}
If the conformal boundary of a globally hyperbolic cylinder in $S^1\times \R$ has no lightlike parts, then every lightlike geodesic lies on at most one ping-pong.
 Further if every lightlike geodesic lies on a ping-pong, then the spacetime has the $k$-PPP for some $k\in\N$.
\end{lem}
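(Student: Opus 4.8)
The plan is to reduce both statements to the dynamics of a single circle homeomorphism arising from the lightlike billiard on the conformal boundary. Since ping-pongs are conformal invariants (as noted in the remark preceding the lemma), I may work in any fixed conformal model. Using Proposition \ref{P3} I realize $(C,g)$ as a region in $(\R^2,dxdy)$, the universal cover of the flat cylinder, in which the two lightlike foliations are the horizontal and vertical lines and the deck group is generated by $(x,y)\mapsto(x+\sqrt2\pi,y+\sqrt2\pi)$. The conformal boundary consists of two disjoint simply closed non timelike curves; the hypothesis that it has no lightlike parts forces these curves to be spacelike, hence writable as graphs $y=f^\pm(x)$ with $f^\pm$ strictly increasing and equivariant under the deck translation.

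\emph{First part.} At a spacelike boundary point an incoming lightlike ray reflects to the other lightlike direction, and this reflection is a well defined local homeomorphism precisely because the boundary is spacelike; a lightlike boundary part would make the reflection degenerate. Consequently, given a lightlike geodesic $\ell$, the billiard trajectory through $\ell$ is produced by extending $\ell$ to the boundary and reflecting, with both the forward and the backward continuation uniquely determined at every bounce. Any ping-pong containing $\ell$ must coincide with this trajectory, so at most one ping-pong contains $\ell$.

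\emph{Second part.} I encode the billiard by the first return map to the lower boundary $\{y=f^-\}$. Running a vertical segment up to $\{y=f^+\}$ and then a horizontal segment back down to $\{y=f^-\}$ sends a bounce point of abscissa $x$ to the one of abscissa $T(x):=(f^-)^{-1}(f^+(x))$. Since $f^\pm$ are increasing and deck equivariant, $T$ is an orientation preserving circle homeomorphism, and each application of $T$ contributes exactly two vertices, one on each boundary component. Thus ping-pongs correspond precisely to periodic orbits of $T$, a ping-pong with $2k$ vertices matching a periodic orbit of period $k$. If every lightlike geodesic lies on a ping-pong, then every orbit of $T$ is periodic; by Poincar\'e's theory of circle homeomorphisms the rotation number of $T$ is then a rational $p/q$ in lowest terms and every periodic orbit has the common period $q$. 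Hence every ping-pong has exactly $2q$ vertices and $(C,g)$ has the $q$-PPP.

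The main obstacle is to make the billiard map genuinely well defined and continuous: one must verify that every lightlike ray issued from a boundary point actually reaches the opposite boundary inside the closure of the region, and that strict monotonicity of $f^\pm$ makes $T$ an honest homeomorphism rather than a monotone map with jumps. Checking the two-vertices-per-step count and the simple-closedness of the resulting loop also requires care. Once $T$ is identified as an orientation preserving circle homeomorphism, the equality of all periods is exactly the classical rotation-number statement that yields the $k$-PPP.
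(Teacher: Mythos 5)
Your proof is correct. The first half coincides with the paper's argument: a boundary with no lightlike parts meets each lightlike line exactly once, so reflection at the boundary is unambiguous and a ping-pong is determined by any one of its sides. For the second half you take a genuinely different route. The paper argues softly by continuity: since the boundary has no lightlike parts, the boundary endpoint of a lightlike geodesic varies continuously with the geodesic, so the number of vertices of the (by the first part, unique) ping-pong through a geodesic is a locally constant, integer-valued function on the connected space of lightlike geodesics, hence constant. You instead encode the lightlike billiard in the return map $T=(\theta^-)^{-1}\circ\theta^+$, observe that simply closed ping-pongs with $2k$ vertices are exactly the periodic orbits of \emph{minimal} period $k$ of the induced orientation preserving circle homeomorphism (simple closedness is what forces minimality of the period), and invoke Poincar\'e's theory: all periodic orbits share the minimal period $q$, where the rotation number is $p/q$ in lowest terms. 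Both arguments are sound; the paper's is shorter and uses nothing beyond continuity and connectedness, while yours is more structural and meshes with what follows: your $T$ is precisely the map the paper introduces only in the next, unlabelled lemma feeding into Proposition \ref{prop_conf}, and your conclusion that every point is periodic of period $q$ gives $T^q(x)=x+p\sqrt{2}\pi$, which, combined with the fact that a simply closed non null-homotopic loop in the cylinder has winding number one (forcing $p=1$), yields that lemma's identity $((\theta^-)^{-1}\circ\theta^+)^k(x)=x+\sqrt{2}\pi$ essentially for free. Two minor bookkeeping points: the conformal realization should be quoted from the hypothesis that the cylinder already sits in $S^1\times\R$ (or from the lemma asserting that globally hyperbolic cylinders are globally conformally flat) rather than from Proposition \ref{P3}, which concerns spacelike Zoll cylinders; and the existence of the two boundary graphs $\theta^\pm$ presupposes conformal compactness, which is implicit here since otherwise there are no ping-pongs and both assertions are vacuous.
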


\begin{proof}
If the conformal boundary has no lightlike parts the intersection of a lightlike line with it is unique. Therefore the vertices and sides of a ping-pong are uniquely 
determined by any side of it.  Further if the conformal boundary has no lightlike parts, the intersection of a lightlike geodesic with the boundary 
varies continuously with the geodesic. Therefore the first selfintersection of a ping-pong varies continuously. Since the number of sides and vertices of a 
ping-pong  is finite, it is constant throughout the set of lightlike geodesics.
\end{proof}

\begin{proof}[Proof of Proposition \ref{P2.3}]
We will show that every lightlike geodesic is a side of a ping-pong by considering it as the limit of a sequence of spacelike geodesics.

Let $F\colon (C,[g])\to (S^1\times \R,[d\varphi^2-dt^2])$ be a conformal embedding. We will not distinguish between $(C,g)$ and its image under $F$. 
Reparameterize all spacelike geodesics of $(C,g)$ as graphs over $S^1\times \{0\}$, i.e.\ graphs of $1$-Lipschitz functions on $S^1$.

Now let $\eta$ be a lightlike geodesic of $(C,g)$. Reparameterize $\eta$ as a partial graph over $S^1\times\{0\}$ and denote it with the same letter. Next 
consider a sequence of spacelike pregeodesics $\gamma_n$ such that $\dot \gamma_n(0)\to \dot\eta(0)$. By the Theorem of Arzela-Ascoli a subsequence of
$\gamma_n$ converges uniformly to a $[d\varphi^2-dt^2]$-non timelike curve $\gamma_\infty\colon S^1\to S^1\times \R$. By our assumptions $\eta$ is a subarc of 
the limit curve. 

Since $\gamma_\infty$ is the limit of spacelike pregeodesics and $(C,g)$ is spacelike Zoll, the limit curve has to be lightlike everywhere on the intersection with 
$C$. This follows from the fact that in $C$ $\gamma_\infty$ is a non timelike pregeodesic as it is a limit of spacelike pregeodesics. If it is not lightlike, $\gamma_\infty$ 
will be a spacelike pregeodesic and therefore nowhere lightlike, thus contradicting the initial assumption on the sequence. 

Fix a simply closed spacelike geodesic $\gamma_0$ of $(C,g)$ not contained in the sequence $\{\gamma_n\}_{n\in\N}$. Since all $\gamma_n$'s intersect 
$\gamma_0$ transversally in exactly $2k$ points, the limit curve intersects $\gamma_0$ in exactly $2k$ points as well. Note that the intersections cannot approach 
one another in the limit since on $\gamma_0$ the injectivity radius is bounded from below. Therefore $\gamma_\infty$ contains exactly $2k$ lightlike pregeodesics 
of $(C,g)$. 

We claim that the limit curve has only vertices on the boundary of $C$ in $S^1\times\R$. Then $\gamma_\infty$ will be a ping-pong with exactly $2k$ sides. If the 
$2k$ lightlike pregeodesics do not cover the entire limit curve, a piece of the boundary has to be part of the limit curve. Note that by Corollary \ref{cor_bdy_killing} the 
conformal boundary of $(C,g)$ consists of spacelike and constant orbits of the unique conformal extension of the $g$-Killing vector field $K$ to 
$(S^1\times\R,[d\varphi^2-dt^2])$. Let $\gamma_\infty|_{[t_0,t_1]}$ be a subarc lying in a spacelike orbit of $K$ and $U$ a neighborhood with $K|_U$ spacelike. By restricting 
$U$ and $[t_0,t_1]$ we can assume that $g(K,K)|_{\gamma_n}$ has at most one critical point, a maximum, in $U$ for all $\gamma_n$ intersecting $U$. This follows from the 
classification of the Killing vector fields of spacelike Zoll cylinders in Proposition \ref{prop_atlases}. In fact let $t$ be a critical point of $g(K,K)|_{\gamma_n}$ that is not a 
maximum. Then $\gamma_n$ is transversal to $K$. Thus the $K$-orbit through $\gamma_n(t)$ is itself a geodesic. If it is spacelike, it has to be closed and $K$ is spacelike 
everywhere. In this case there is only one geodesic $K$-orbit and we can assume that it lies outside of $U$. In the other cases $K_{\gamma_n(t)}$ has to be a non spacelike 
and again we can assume that it is disjoint from $U$. Further note that the maxima of $g(K,K)|_{\gamma_n}$ are exactly the minima of 
$$\arcosh \angle_{hyp} (K,\dot{\gamma}_n)=\frac{g(K,\dot{\gamma}_n)}{\sqrt{g(K,K)}\sqrt{g(\dot{\gamma}_n,\dot{\gamma}_n)}}.$$
Note that this definition makes sense without referring  to $g$, since it coincides with the respective quotient in $(S^1\times \R,d\varphi^2-dt^2)$.
Consequently, by Proposition \ref{prop_atlases}, the quotient is monotonous in $U$ except at its minima. 
From our assumptions we know that $\gamma_n|_{[t_0,t_1]}$ converges uniformly to $\gamma_\infty|_{[t_0,t_1]}$ a piece of a spacelike orbit of $K$. Use $K$ and a
curve orthogonal to $K$ in $U$ to introduce coordinates $(w,z)$ on $U$ such that $\partial_w=K$ and $\partial_z\perp K$ relative to $d\varphi^2-dt^2$. Note that on the 
intersection with $C$ the orthogonality also holds with respect to $g$. Choose constant $0<C_1,C_2<\infty$ such that the absolute value of the slope of lightlike vectors in these coordinates is
bounded between $C_1$ and $C_2$. Reparameterize the $\gamma_n$ and $\gamma_\infty$ on the intersection with $U$ as graphs over the $w$-axis. Let $w_0:=\gamma_\infty (t_0)<
w_1:=\gamma_\infty(t_1)$. For $\varepsilon>0$ choose $N$ such that 
$$\gamma_n|_{[w_0,w_1]}\subset U\cap \{|z|<\varepsilon C_2(w_1-w_0)\}$$ 
for all $n\ge N$. Since the slope of $\dot\gamma_n$ is bounded by $C_2$ and $z(\gamma_n)$ has at most one critical point in $U$, the slope of $\dot\gamma_n$
is bounded by $C_2 \varepsilon$ on a set $A\subseteq [w_0,w_1]$ of measure at least $(w_1-w_0)(1-2C_2\varepsilon)$. 

Now we can give a bound on $\frac{g(K,\dot{\gamma}_n)}{\sqrt{g(K,K)}\sqrt{g(\dot{\gamma}_n,\dot{\gamma}_n)}}$ on $A$. In fact we know that $d\varphi^2-dt^2$ in the 
$(w,z)$-coordinates reads as $Edw^2-Gdz^2$ for some positive smooth functions $E,G$ on $U$. The upper bound on the slope of the lightlike directions is equivalent
to saying $E-G C_2^2\le 0$, i.e.\ $\frac{G}{E}\ge \frac{1}{C_2^2}$. The lower bound on the slope is equivalent to saying $E-GC_1^2\ge 0$, i.e.\ $\frac{G}{E}\le \frac{1}{C_1^2}$.
For $\dot\gamma_n=(1,\dot\gamma_{z,n})$ we then have 
$$1-\frac{G}{E}\dot\gamma_{z,n}^2\ge 1-\frac{\dot\gamma_{z,n}^2}{C_1^2}\ge 1-\frac{\e^2 C_2^2}{C_1^2}$$
on $A$. Consequently 
$$\frac{g(K,\dot{\gamma}_n)}{\sqrt{g(K,K)}\sqrt{g(\dot{\gamma}_n,\dot{\gamma}_n)}}=\frac{1}{\sqrt{1-\frac{G}{E}\dot\gamma_{z,n}^2}}\le \frac{C_1}{\sqrt{C_1^2-C_2^2\varepsilon^2}}$$
on $A$.

By the choice of coordinates the $g$-gradient of $w$ on the intersection with $C$ is grad$^g w=\frac{K}{g(K,K)}$. Therefore we know that 
$$\frac{\sqrt{g(K,K)}}{\sqrt{g(\dot\gamma_n,\dot\gamma_n)}}dw(\dot\gamma_n)=\frac{g(K,\dot\gamma_n)}{\sqrt{g(K,K)}\sqrt{g(\dot\gamma_n,\dot\gamma_n)}}
\le \frac{C_1}{\sqrt{C_1^2-C_2^2\varepsilon^2}}$$
or equivalently
$$\frac{\sqrt{C_1^2-C_2^2\varepsilon^2}}{C_1}\sqrt{g(K,K)}\le \sqrt{g(\dot\gamma_n,\dot\gamma_n)}.$$
Thus we have 
$$L^g(\gamma_n)\ge \int_A \sqrt{g(\dot\gamma_n,\dot\gamma_n)} dt \ge (w_1-w_0)(1-2C_2\varepsilon)\frac{\sqrt{C_1^2-C_2^2\varepsilon^2}}{C_1}\inf_U\sqrt{g(K,K)}.$$
Since we can choose $U$ as small as we wish, $\inf_U\sqrt{g(K,K)}$ will diverge to $\infty$ by Proposition \ref{prop_atlases}. Thus the $g$-length 
of the $\gamma_n$ diverges as $n\to\infty$. This contradicts the corollary of 
Waldsley's theorem asserting that the geodesic flow on the unit tangent bundle of a spacelike Zoll manifold is periodic.
\end{proof}

\begin{prop}\label{prop_conf}
A globally hyperbolic cylinder $(C,g)$ has the $k$-PPP  and the conformal boundary contains no lightlike parts iff it is $C^0$-conformal to the $k$-fold
cover of de Sitter space. Further if the conformal boundary is $C^n$-spacelike, then the conformal homeomorphism can be chosen to be a $C^n$-diffeomorphism.
\end{prop}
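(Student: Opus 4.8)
The plan is to prove both implications, the forward one (construction of the conformal homeomorphism) being the substantial part. For the easy direction, suppose $(C,g)$ is $C^0$-conformal to the $k$-fold cover $D_k$ of de Sitter space. Ping-pongs are defined purely through the lightlike foliations and the conformal boundary, so they are preserved by conformal homeomorphisms (as noted in the Remark following their definition). Since $D_k$ has the $k$-PPP and a spacelike (hence non-lightlike) conformal boundary by the explicit construction of its conformal class, transporting these two properties across the conformal homeomorphism yields them for $(C,g)$.

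For the forward direction, assume $(C,g)$ has the $k$-PPP and a conformal boundary with no lightlike parts. First I would fix a conformal embedding $F\colon (C,g)\hookrightarrow (S^1\times\R, d\varphi^2-dt^2)$ and lift to lightlike coordinates $(x,y)$ on $\R^2$ as in the conformal flatness lemma, so that $\widetilde C$ becomes the strip bounded by two spacelike graphs and the foliations $\mathcal{F}_1,\mathcal{F}_2$ become the horizontal and vertical lines. Because the boundary carries no lightlike part, a lightlike line meets each boundary component in a single point depending continuously on the line (Lemma \ref{lem_pp_unique}). Following an $\mathcal{F}_1$-leaf down to the lower boundary and an $\mathcal{F}_2$-leaf back up then defines a first-return (reflection) homeomorphism $R$ of the upper boundary component $\gamma^+\cong S^1$.

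The key step is to identify the conjugacy class of $R$ and extend the resulting identification to the interior. The $k$-PPP says every lightlike geodesic closes into a ping-pong with exactly $2k$ vertices, which translates to $R^k=\id$ with every orbit of exact period $k$; moreover, since a ping-pong is by definition a \emph{simply} closed loop, its $k$ vertices on $\gamma^+$ occur in cyclic order, forcing the rotation number of $R$ to be $1/k$ for the appropriate orientation. By the classification of finite-order orientation-preserving circle homeomorphisms, $R$ is then topologically conjugate to the rotation $\rho_{2\pi/k}$; let $h\colon\gamma^+\to S^1$ realize the conjugacy. The explicit flat model shows that the corresponding return map on $D_k$ is exactly $\rho_{2\pi/k}$, so $h$ matches the boundary dynamics of the two spaces. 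I would then transport $h$ to the lower boundary through one of the lightlike leaf-correspondences, obtaining compatible parametrizations of $\gamma^\pm$, and coordinatize the interior by the pair of lightlike leaves through each point, sending the intersection of the $\mathcal{F}_1$-leaf from $p\in\gamma^+$ and the $\mathcal{F}_2$-leaf from $p'\in\gamma^-$ to the corresponding intersection in $D_k$. The hard part will be checking that this assignment is single-valued, equivariant under the deck transformation $(x,y)\mapsto(x+\sqrt{2}\pi,y+\sqrt{2}\pi)$, and genuinely foliation-preserving; here I would exploit the product form $\widetilde\Phi(x,y)=(\widetilde\Phi_1(x),\widetilde\Phi_2(y))$ of conformal maps in lightlike coordinates (as in Proposition \ref{prop_extend}) to reduce the construction to equivariant boundary reparametrizations whose closing-up is exactly the matching of the two reflection dynamics, both equal to $\rho_{2\pi/k}$.

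Finally, for the regularity statement, if the conformal boundary is $C^n$-spacelike then the leaf-correspondences, and hence $R$, are $C^n$-diffeomorphisms of the circle. A finite-order $C^n$-diffeomorphism can be $C^n$-linearized by the Bochner averaging trick: after passing to the universal cover one sets $h:=\frac{1}{k}\sum_{j=0}^{k-1}\rho_{-2\pi j/k}\circ R^{j}$, which is a $C^n$-conjugacy of $R$ to $\rho_{2\pi/k}$. Carrying this regularity through the interior construction shows that the conformal homeomorphism $\Phi$ may be taken to be a $C^n$-diffeomorphism, completing the proof.
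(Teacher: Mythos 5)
Your proposal is correct and follows essentially the same route as the paper: both reduce everything to the boundary return map (your $R$ is exactly the circle map induced by the paper's $(\theta^-)^{-1}\circ\theta^+$ on the universal cover), use the $k$-PPP together with simple-closedness of ping-pongs to show this map is a finite-order homeomorphism of rotation number $1/k$, conjugate it to the rotation (resp.\ the translation by $\sqrt{2}\pi/k$ upstairs), and then rebuild the conformal homeomorphism from that single conjugation via the product form $\widetilde\Phi(x,y)=(\widetilde\Phi_1(x),\widetilde\Phi_2(y))$ of conformal maps in lightlike coordinates. Your averaging formula for the $C^n$ case (read on lifts, $\widetilde h=\frac{1}{k}\sum_{j}\bigl(T^j-j\sqrt{2}\pi/k\bigr)$, which is strictly increasing, equivariant, and intertwines $T$ with the translation) is a concrete realization of the step the paper only asserts, namely that the periodicity of $(\theta^-)^{-1}\circ\theta^+$ allows the conjugation $\psi$ to be chosen $C^n$.
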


Assume that the globally hyperbolic cylinder $(C,g)$ has the $k$-PPP and the conformal boundary contains no lightlike parts. Lift everything to the universal
cover $(\R^2,dxdy)$ of $(S^1\times \R,d\varphi^2-dt^2)$ with the deck transformation group generated by $(x,y)\mapsto (x+\sqrt{2}\pi,y+\sqrt{2}\pi)$. 
Then the boundary of the universal cover $\widetilde{C}$ is the union of the graphs of two $\sqrt{2}\pi$-equivariant homeomorphisms 
$\theta^\pm\colon \R\to \R$, i.e.\ $\theta^\pm (x+\sqrt{2}\pi)=\theta^\pm (x)+\sqrt{2}\pi$, over the $x$-axis ($\theta^-<\theta^+$).

\begin{lem}
Assume that the conformal boundary of the globally hyperbolic cylinder $(C,g)$ does not contain any lightlike parts. Then $(C,g)$ has the $k$-PPP iff
$((\theta^-)^{-1}\circ \theta^+)^k(x)=x+\sqrt{2}\pi$ for all $x\in \R$.
\end{lem}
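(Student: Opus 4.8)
We have a globally hyperbolic cylinder $(C,g)$ conformally embedded as an open region in the flat cylinder, lifted to $(\R^2, dxdy)$. The boundary consists of two equivariant graphs $\theta^\pm$ over the $x$-axis. A ping-pong is a closed lightlike loop bouncing between the two boundary curves. I need to understand how a lightlike trajectory reflects off the boundary in these coordinates.

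**The reflection dynamics.** In coordinates $(x,y)$ where lightlike lines are horizontal ($y=\text{const}$) and vertical ($x=\text{const}$), a ping-pong alternates between horizontal and vertical segments, with vertices on the boundary. Suppose a horizontal segment $y = y_0$ meets the lower boundary $y = \theta^-(x)$ at the point where $\theta^-(x) = y_0$, i.e. at $x_1 = (\theta^-)^{-1}(y_0)$. From that vertex, the loop goes vertically (constant $x = x_1$) until it meets the upper boundary $y = \theta^+(x_1)$, giving the new height $y_1 = \theta^+(x_1) = \theta^+((\theta^-)^{-1}(y_0))$. So one "double bounce" (down-then-up) sends the height $y_0 \mapsto \theta^+ \circ (\theta^-)^{-1}(y_0)$. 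Conversely, tracking the $x$-coordinate of successive lower vertices, one bounce-pair advances $x \mapsto (\theta^-)^{-1}\circ\theta^+(x)$. So the map governing the ping-pong dynamics is $T := (\theta^-)^{-1}\circ \theta^+$, which I note is an increasing $\sqrt{2}\pi$-equivariant homeomorphism of $\R$ satisfying $T(x) > x$ (since $\theta^+ > \theta^-$).

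Let me set this up carefully as the plan.

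\medskip

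The plan is to translate the ping-pong condition into the dynamics of the circle homeomorphism $T:=(\theta^-)^{-1}\circ\theta^+$ obtained by composing the two boundary maps. First I will describe the bounce map: tracking the $x$-coordinates of the successive vertices of a ping-pong lying on the lower boundary. Starting from a lower vertex at $x=x_0$, the loop runs vertically up to the upper boundary at height $\theta^+(x_0)$, then horizontally across to the next lower vertex, whose $x$-coordinate $x_1$ is determined by $\theta^-(x_1)=\theta^+(x_0)$, i.e.\ $x_1=(\theta^-)^{-1}\bigl(\theta^+(x_0)\bigr)=T(x_0)$. Since the conformal boundary contains no lightlike parts, $\theta^\pm$ are strictly increasing homeomorphisms, so $T$ is a well-defined increasing $\sqrt2\pi$-equivariant homeomorphism of $\R$ with $T(x)>x$ for all $x$ (because $\theta^+>\theta^-$). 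Thus the lower vertices of a ping-pong starting at $x_0$ are exactly $x_0,T(x_0),T^2(x_0),\ldots$

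Next I observe that a ping-pong through a lightlike geodesic with lower vertex at $x_0$ closes up precisely when the orbit of $x_0$ under $T$ returns to $x_0$ modulo the deck transformation, i.e.\ when $T^m(x_0)=x_0+\sqrt2\pi$ for some $m\in\N$; and the number of lower vertices (hence half the total number of vertices) equals the smallest such $m$. So I would argue: if $((\theta^-)^{-1}\circ\theta^+)^k(x)=x+\sqrt2\pi$ for all $x$, then every lightlike geodesic (identified by the $x$-coordinate of its lower intersection with the boundary) lies on a ping-pong that closes after exactly $k$ lower bounces and $k$ upper bounces, giving $2k$ vertices; this is exactly the $k$-PPP. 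I must check that the minimality of the period is uniform: since $T^k=\mathrm{id}+\sqrt2\pi$ and $T(x)>x$, no proper divisor of $k$ can already satisfy $T^j(x)=x+\sqrt2\pi$ for every $x$ unless it holds for all points with the same quotient period, which Lemma \ref{lem_pp_unique} guarantees is constant across all lightlike geodesics.

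For the converse, I assume the $k$-PPP. By Lemma \ref{lem_pp_unique}, since the boundary has no lightlike parts, every lightlike geodesic lies on a unique ping-pong and all ping-pongs have exactly $2k$ vertices, hence exactly $k$ lower vertices. Reading this through the bounce map, for every $x_0\in\R$ the $T$-orbit first returns modulo $\sqrt2\pi$ after exactly $k$ steps, so $T^k(x_0)=x_0+n_{x_0}\sqrt2\pi$ for some positive integer $n_{x_0}$. Since $T^k$ is a continuous equivariant map and $x\mapsto (T^k(x)-x)/(\sqrt2\pi)$ takes only integer values, it is constant; and because the first return is after exactly $k$ bounces with no earlier return modulo the deck transformation, monotonicity of $T$ forces $n_{x_0}=1$. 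This yields $T^k(x)=x+\sqrt2\pi$ for all $x$, as claimed.

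The main obstacle is the bookkeeping around \emph{minimality} of the return time: I must rule out the degenerate possibility that the ping-pong "closes up" numerically at step $k$ while actually having already closed at an earlier step, or that different lightlike geodesics close after different numbers of bounces. Both issues are controlled by Lemma \ref{lem_pp_unique} (uniqueness of the ping-pong and constancy of the vertex count across all lightlike geodesics when the boundary has no lightlike parts) together with the strict inequality $T(x)>x$, which guarantees the lower vertices strictly advance and cannot coincide before a full deck period is traversed. Once this is in place, the equivalence is a direct reinterpretation of the $k$-PPP in terms of the conjugation dynamics of $T$.
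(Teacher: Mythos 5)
Your proof follows essentially the same route as the paper: both reduce the ping-pong dynamics to the bounce map $T=(\theta^-)^{-1}\circ\theta^+$ acting on the $x$-coordinates of lower vertices (the paper phrases this as an inductive chain of ``wedges''), and both directions of the equivalence are argued exactly as you do, so in structure the proposal is correct.

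One step, however, is justified by a reason that does not actually suffice. In the converse direction you write that, having established $T^k(x_0)=x_0+n_{x_0}\sqrt2\pi$ with $n_{x_0}\in\N$ constant, ``monotonicity of $T$ forces $n_{x_0}=1$'' because there is no earlier return modulo the deck transformation. Monotonicity plus minimality of the return time do not imply this: for $k$ odd, the map $T=\id+\tfrac{2\sqrt2\pi}{k}$ is an increasing equivariant homeomorphism with $T>\id$ whose orbits first return modulo $\sqrt2\pi$ after exactly $k$ steps, yet $T^k=\id+2\sqrt2\pi$. What actually rules out $n_{x_0}\ge 2$ is the requirement, built into the definition of a ping-pong, that the loop be \emph{simply closed}: an embedded closed curve in the cylinder represents a primitive class of $\pi_1(S^1\times\R)$ (or is null-homotopic, which is excluded here since $T>\id$ makes the lifted chain strictly advance), so its lift closes up after exactly one deck translation, giving $T^k(x_0)=x_0+\sqrt2\pi$ directly. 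This also lets you bypass the continuity-of-$n_x$ argument entirely. The paper glosses over the same point (it asserts the identity as soon as the ping-pong ``returns to the same geodesic after $k$ wedges''), but since you explicitly offered a justification and that justification is false as stated, you should replace it by the embeddedness argument; with that one-line repair the proof is complete.
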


\begin{proof}
Let $x\in\R$. Then $(x,\theta^+(x))$ is the future endpoint of a vertical lightlike $\widetilde g$-geodesic $\gamma^+_{x}$ of $(\widetilde{C},\widetilde{g})$ 
in $\R^2$. The point 
$(\theta^-)^{-1}\circ \theta^+(x),\theta^+(x))$ is the past endpoint of the horizontal lightlike $\widetilde g$-geodesic $\gamma^-_{\theta^+(x)}$ of 
$(\widetilde C,\widetilde g)$ in $\R^2$ whose future endpoint in $\R^2$ is $(x,\theta^+(x))$. Now we can consider the vertical lightlike geodesic of 
$(\widetilde C,\widetilde g)$ whose past endpoint is $(\theta^-)^{-1}\circ \theta^+(x),\theta^+(x))$ and start the above construction over again. 
This defines inductively a series of wedges in $(\R^2,dxdy)$ with vertices in $\partial \widetilde C$ and sides in $\widetilde C$. 

Now if $(C,g)$ has the $k$-PPP take the lift of a ping-pong that contains a given lightlike $\widetilde g$-geodesic $\gamma^+_x$. The ping-pong in 
$(C,g)$ returns to the same geodesic after $k$ wedges in $(S^1\times \R,d\varphi^2-dt^2)$. By the first paragraph this implies that 
$((\theta^-)^{-1}\circ \theta^+)^k(x)=x+\sqrt{2}\pi$. Since any $\gamma_x$ lies on the lift of a ping-pong, we see that the $k$-PPP implies the 
identity for $((\theta^-)^{-1}\circ \theta^+)^k$. 

For the other direction we can restrict ourself to geodesic lifting to vertical lightlike geodesics since the claim for geodesics lifting to 
horizontal lightlike geodesics follows by considering the vertical lightlike geodesic with the same future endpoint as the given horizontal lightlike geodesic.
If the identity $((\theta^-)^{-1}\circ \theta^+)^k(x)=x+\sqrt{2}\pi$ holds for all $x$ then the projection to $S^1\times \R$ of the wedges 
constructed in the first paragraph will form a $k$-ping-pong, thus showing the lemma.
\end{proof}

\begin{proof}[Proof of Proposition \ref{prop_conf}]
The second assertion will readily follow from the construction in the first part. Further if $(C,g)$ is $C^0$-conformal to the $k$-fold cover of de Sitter space, then
the $k$-PPP is obvious for $(C,g)$. The conformal boundary does not contain any lightlike parts either since this is invariant under conformal homeomorphisms.

Using the Lemma choose a $\sqrt{2}\pi$-equivariant homeomorphism $\psi\colon\R\to\R$ conjugating $(\theta^-)^{-1}\circ \theta^+$ to a translation 
by $\frac{\sqrt{2}\pi}{k}$. Applying $(\psi^{-1}\circ\theta^+\circ\psi)^{-1}$ to both sides we obtain 
$$(\psi^{-1}\circ\widetilde{\theta}^-\circ\psi)^{-1}(x)-(\psi^{-1}\circ\widetilde{\theta}^+\circ\psi)^{-1}(x)=\frac{\sqrt{2}\pi}{k}.$$
Now we can isotope $\psi^{-1}\circ\widetilde{\theta}^-\circ\psi$ and $\psi^{-1}\circ\widetilde{\theta}^+\circ\psi$ simultaneously to 
translations. Note that for the $k$-fold cover of de Sitter the boundary is given by two translations whose difference is $\frac{\sqrt{2}\pi}{k}$. Thus the result of this 
isotopy is a conformal homeomorphism of $(C,g)$ to the $k$-fold cover of de Sitter space.

 Finally the conformal boundary is $C^n$-spacelike if, and only if the homeomorphisms $\theta^\pm$ are $C^n$-diffeomorphisms. Since 
$(\theta^-)^{-1}\circ \theta^+$ is periodic the conjugation $\psi$ can be chosen to be $C^n$ as well. This shows that in this case $(C,g)$ is $C^n$-conformal
to the $k$-fold cover of de Sitter space.
\end{proof}

%
%
%

\begin{proof}[Proof of Theorem \ref{thm_conf_killing}]
The proof is follows from Corollary \ref{cor_bdy_killing}, Proposition \ref{P2.3} and Proposition \ref{prop_conf}. In fact if $(C,g)$ is spacelike Zoll then 
by Corollary \ref{cor_bdy_killing} the conformal boundary does not contain any lightlike parts. Further by Proposition \ref{P2.3} $(C,g)$ has the $k$-PPP
for some $k\in \N$. By Proposition \ref{prop_conf} these two properties imply that $(C,g)$ is $C^0$-conformal to the $k$-fold cover of de Sitter space. 
If $K$ is periodic then by Corollary \ref{cor_bdy_killing} the boundary is smooth and by Proposition \ref{prop_conf} the conformal homeomorphism is a smooth 
diffeomorphism.
\end{proof}


\section{Parabolic case}\label{section_parabolic}
In this section we will describe a family of parabolic spacelike Zoll surfaces, i.e.\ admitting an atlas similar to the one described at point 2 of Proposition 
\ref{prop_atlases}. This family will allow us to construct several interesting examples, such as spacelike Zoll M\"obius strip with non 
constant curvature or spacelike Zoll cylinder not smoothly conformal to a cover of de Sitter space. We start with following definition  of a ``parabolic atlas".
\begin{defi}\label{def_para}
Let $(C,g)$ be a Lorentzian cylinder  with an atlas  $\mathcal A=\{(U_i,\phi_i); i\in\zkk\}$. We denote by $\Phi_{ij}=\phi_{j}\circ\phi^{-1}_{i}$ the transition functions of $\mathcal A$.\\
We will say that $\mathcal {A}$ is a parabolic atlas of  $(C,g)$ if:
\begin{enumerate}
 \item for all $i\in\zkk$, the image of $\phi_i$ is $\R^2$;
 \item the transition functions are the following:
 $$
 \begin{array}{lllll}
 & \Phi_{2i,2i+1}:& H^+&\rightarrow &H^+\\
 & & (x,y)&\mapsto& \left(-x+\frac {2}{y},y\right),\\
 \text{if}\ i\neq 0& \Phi_{2i-1,2i}:& H^-&\rightarrow& H^-\\
 & &(x,y)&\mapsto&  \left(-x+\frac {2}{y},y\right),\\
 & \Phi_{2k-1,0}:& H^-&\rightarrow& H^-\\
 & &(x,y)&\mapsto & \left(-x+\frac {2}{y}+\tau,y\right),
  \end{array}
 $$
 where $H^+=\{(x,y)\in \R^2;y>0\}$, $H^-=\{(x,y)\in \R^2; y<0\}$ and $\tau\in \R$;
 \item for all $i\in\{1,\dots 2k\}$, $$g_i=\phi_i^{-1}{}^*g= y^2 dx^2+2dxdy+f_i(y)dy^2,$$
 where $f_i$ is a smooth function satisfying $1-y^2f_i(y)>0$ for all $y\in \R$.
\end{enumerate}
\end{defi}
\begin{figure}[ht]
 \centering
\includegraphics[scale=.4]{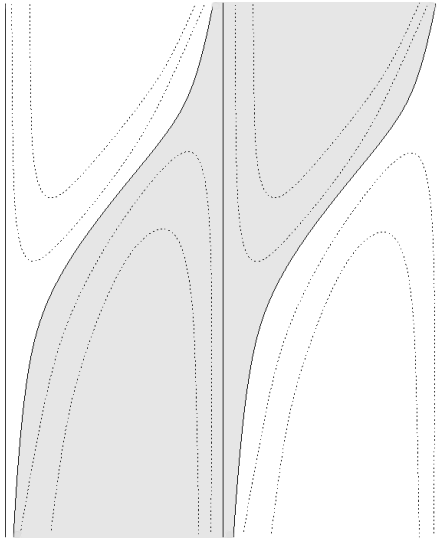}
\caption{The manifold, the open set $U_1$ and the Killing field ($k=2$)}
\end{figure}

\begin{rema}
Note that a parabolic atlas induces an analytic structure on $C$. The Killing field $K$ is according to the conditions analytic as well. In opposition the 
metric $g$ need not be analytic, but the $g$-length of $K$ is again an analytic function on $C$. 
\end{rema}

Clearly,  Lorentzian cylinders admitting a parabolic atlas posses a Killing vector field $K$ that is everywhere spacelike except  on a finite number of lightlike orbit 
($K$ reads as $\partial_x$ in any map $\phi_i$). DeSitter space clearly admits such an atlas. It has  the following 
parameters $k=1$, $\tau=0$ and  $f_1=f_2=0$. Let us note that if we modify only $\tau$, the cylinder obtained still has constant curvature equal to $1$ but  is 
not isometric to  de Sitter space (for example its spacelike geodesics are no more closed).

We remark also that for any $i\in \zkk$ the restrictions of $f_{2i}$ and $f_{2i+1}$ to $H^+$ have to be equal as well as the restrictions of  $f_{2i-1}$ and $f_{2i}$ to $H^-$. In particular if $g$ is analytic then $f_1=f_2=\dots=f_{2k}$.

\begin{prop}\label{prop_parabolicatlas}
Let $(C,g)$ be a  spacelike Zoll cylinder admitting a parabolic Killing field $K$, i.e.\  that is not periodic and does not vanish. 
Let $\sigma$ be the curvature of $g$ and $\alpha\colon C\to\R$ the function defined by $\alpha(p)=g_p(K_p,K_p)$. 
There exists $l>0$ such that $(C, l\cdot g)$ admits a parabolic atlas if and only if 
for any $p\in C$, $\alpha(p)=0$ implies $\sigma(p)>0$ and $d\sigma(p)=0$. 
\end{prop}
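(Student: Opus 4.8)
The plan is to reduce the whole statement to the local normal form of Proposition \ref{prop_atlases}(2) together with one curvature computation. In a chart in which $g=h(y)\,dx^2+2\,dx\,dy$ and $K=\partial_x$, computing the Levi-Civita connection gives Gauss curvature $\sigma=\tfrac12 h''(y)$, while $\alpha=g(K,K)=h(y)$. Thus on such a chart $\{\alpha=0\}$ is the lightlike orbit $\{y=0\}$, one has $\sigma=\tfrac12 h''(0)$ there, and $d\sigma=\tfrac12 h'''(y)\,dy$, so $d\sigma$ vanishes on the orbit exactly when $h'''(0)=0$. I would first record that the matching conditions of Proposition \ref{prop_atlases}(2), namely $h_{2i}=h_{2i+1}$ on $]0,\infty[$ and $h_{2i}=h_{2i-1}$ on $]-\infty,0[$, force all the $h_i''(0)$ to coincide by smoothness; call the common value $2\sigma_0$. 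Hence $\sigma\equiv\sigma_0$ along the union of the lightlike orbits, irrespective of any hypothesis.

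For the direct implication I would start from a parabolic atlas of $(C,lg)$. In a parabolic chart $g_{par}=y^2\,dx^2+2\,dx\,dy+f(y)\,dy^2$ one has $\alpha_{lg}=y^2$, vanishing precisely on $\{y=0\}$. A change of coordinates $\tilde x=x+a(y),\ \tilde y=b(y)$ with $b'=\sqrt{1-y^2 f}>0$ and $a'=(1-\sqrt{1-y^2f})/y^2$ cancels the $dy^2$-term and puts $g_{par}$ in the form $h(\tilde y)\,d\tilde x^2+2\,d\tilde x\,d\tilde y$ with $h(b(y))=y^2$. Since $b(0)=0$ and $b'(0)=1$, differentiating $h(b(y))=y^2$ at $y=0$ gives $h''(0)=2$, and a further differentiation gives $h'''(0)=0$, both independently of $f$. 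By the first paragraph the curvature of $lg$ therefore equals $1$ and $d\sigma_{lg}=0$ on every lightlike orbit. Since the zero set of $\alpha$, the sign of $\sigma$ and the vanishing of $d\sigma$ are invariant under $g\mapsto lg$, it follows that $g$ satisfies $\sigma>0$ (indeed $\sigma=l$) and $d\sigma=0$ on $\{\alpha=0\}$.

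For the converse, assume $\sigma>0$ and $d\sigma=0$ on $\{\alpha=0\}$, so $\sigma_0>0$, and set $l:=\sigma_0$. Scaling $g$ by $l$ and $K$ by $l^{-1}$ turns the charts of Proposition \ref{prop_atlases}(2) into $\hat h_i(y)\,dx^2+2\,dx\,dy$ with $\hat h_i(0)=\hat h_i'(0)=0$, $\hat h_i''(0)=2$, and $\hat h_i'''(0)=0$ (the last from $d\sigma=0$). By Hadamard's lemma write $\hat h_i(y)=y^2\rho_i(y)$ with $\rho_i$ smooth, $\rho_i(0)=1$, and put $\beta_i(y):=y\sqrt{\rho_i(y)}=\mathrm{sgn}(y)\sqrt{\hat h_i(y)}$. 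Then $\beta_i$ is smooth with $\beta_i'(0)=1$, and it is a diffeomorphism of $\R$ because $\hat h_i$ is strictly monotone with $\hat h_i'\neq0$ off $0$ (a zero of $\hat h_i'$ at some $y_0\neq0$ would make the spacelike $K$-orbit there a closed geodesic, contradicting that $K$ is parabolic; cp. Proposition \ref{prop_cases}). The coordinate change $Y=\beta_i(y)$, inverse to the one of the second paragraph, brings the metric to the parabolic form, with $f_i$ fixed by $1-Y^2 f_i=1/(\beta_i')^2$; in particular $1-Y^2 f_i>0$ holds automatically.

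The delicate step, which I expect to be the main obstacle, is the smoothness of $f_i$ at the orbit $Y=0$. From $f_i=((\beta_i')^2-1)/((\beta_i')^2\beta_i^2)$ and $\rho_i(0)=1$ one finds that the numerator equals $(\rho_i-1)+y\rho_i'+\tfrac14 y^2(\rho_i')^2/\rho_i$, which vanishes at $y=0$ with derivative $2\rho_i'(0)$, while the denominator vanishes to order exactly two. Hence $f_i$ extends smoothly across the orbit if and only if $\rho_i'(0)=0$, i.e. iff $\hat h_i'''(0)=0$, which is precisely the hypothesis $d\sigma=0$ on $\{\alpha=0\}$; and $\sigma>0$ is what makes $l>0$ available and $\beta_i$ a diffeomorphism at the orbit. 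It then remains to identify the transition functions: being isometries of parabolic metrics that preserve the orbit foliation and the levels of $\alpha=y^2$, on each half-plane $H^\pm$ they must be of the form $(x,y)\mapsto(\varepsilon x+c(y),y)$, and imposing the isometry condition on $y^2\,dx^2+2\,dx\,dy$ forces $\varepsilon=-1$ and $c(y)=2/y$, up to one additive constant $\tau$ that can appear only in the transition closing the cycle $\zkk$. The relations $\hat h_{2i}=\hat h_{2i\pm1}$ then yield $f_{2i}=f_{2i+1}$ on $H^+$ and $f_{2i}=f_{2i-1}$ on $H^-$, completing a parabolic atlas for $(C,lg)$.
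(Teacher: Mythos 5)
Your construction is, in substance, the paper's own: you start from the normal form $h_i(y)\,dx^2+2\,dx\,dy$ of Proposition \ref{prop_atlases}, rescale so that $h_i''(0)=2$ (the common value $2\sigma_0$ across charts, as you correctly note), pass to the coordinate $Y=\mathrm{sgn}(y)\sqrt{h_i(y)}$ (the paper writes $h=y^2e^{2a}$ and builds the same change of variables from the curve $\gamma$ of \eqref{system_gamma}), use the Zoll hypothesis to rule out critical points of $h_i$ away from the orbit (the paper's Fact \ref{lefact}), and use $d\sigma=0$, i.e.\ $h_i'''(0)=0$, to obtain smoothness across $\{y=0\}$; your Hadamard-quotient analysis of $f_i$ is a correct equivalent of the paper's smoothness check for $\partial x/\partial y$, and your verification of the easy direction (which the paper leaves to the reader) is fine.

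The gap is in the identification of the transition functions. You claim that, these being isometries preserving the $K$-orbits and the levels of $\alpha=y^2$, ``imposing the isometry condition on $y^2dx^2+2dxdy$ forces $\varepsilon=-1$ and $c(y)=2/y$''. That is false as stated: the translations $(x,y)\mapsto(x+c,y)$, i.e.\ the flow of the Killing field itself, are also isometries of $y^2dx^2+2dxdy+f(y)dy^2$ preserving every orbit and every level of $\alpha$, so the isometry condition alone leaves $\varepsilon=+1$ available; moreover maps $(x,y)\mapsto(x+c(y),-y)$ with $c'(y)=2/y^2$ are isometries onto the chart with $f$ replaced by $f(-\,\cdot\,)$, so the sign of the second coordinate is not free either. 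Excluding the translation case requires a genuinely global input which is missing from your argument: if a transition on $H^+$ were a translation, then by rigidity of isometries (two isometric immersions of a connected pseudo-Riemannian surface that agree on an open set agree everywhere) the chart immersions $\phi_i^{-1}$ and $\phi_{i+1}^{-1}\circ(\mathrm{translation})$ would coincide on all of $\R^2$, giving $U_i=U_{i+1}$, contradicting that consecutive charts contain distinct lightlike orbits of $K$; equivalently, gluing two planes along a half-plane by a translation produces a non-Hausdorff space (a sequence with $y_n\to 0^+$ would converge simultaneously to a point of each of the two boundary orbits). The paper sidesteps this entirely by a different argument: it exhibits the transition as the orthogonal symmetry across a geodesic of the overlap perpendicular to $K$ (citing \cite{BavardMounoud}), which manifestly reverses $K$ and hence has $\varepsilon=-1$. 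Once translations are excluded, your normalization putting the single constant $\tau$ in the transition closing the cycle is the same as the paper's.
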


\begin{proof} 
%
According to Proposition \ref{prop_cases}, $K$ has spacelike and lightlike orbits but no timelike ones.
Let $\eta$ be a lightlike geodesic of $(C,g)$  transversal to $K$. According to  Proposition \ref{prop_llgeod}, the function  $\alpha$ vanishes somewhere on $\eta$ and goes to $+\infty$ on both ends of $\eta$.

Let $U$ be the saturation of $\eta$ by $K$. There exists a lightlike geodesic vector field $Y$ on $U$ tangent to $\eta$ such that $[Y,K]=0$. It allows us to find 
coordinates on $U$ such that $g$ reads as ${h}(y)dx^2+2 dxdy$ with ${h}\geq 0$ for $(x,y)\in \R\times I$, according to Proposition \ref{prop_atlases} ${h}$ vanishes at
only one point. The assumption on the curvature implies that ${h}(y)=0$ implies ${h}''(y)>0$. Choose the coordinates so that ${h}(0)=0$ and denote by $a$ the function 
defined by ${h}(y)=y^2e^{2a(y)}.$ Rescaling $g$ we assume that $a(0)=0$. The hypothesis $\sigma'(0)=0$ entails that $a'(0)=0$. 

Let $\gamma(t)=(x(t),y(t))$ be the unique curve satisfying:
\begin{equation}\label{system_gamma}\left\{\begin{array}{r}
 g(\gamma'(t),\partial_x)=1\\
 \gamma'(t).ye^{a(y)}=1\\
 \gamma(0)=0 
 \end{array} \right.
 \end{equation}
 i.e.
 $$\left\{\begin{array}{r}
 x'y^2e^{2a(y)}+y'=1\\
 y'(t)e^{a(y)}(1+a'(y)y)=1\\
 \gamma(0)=0 
 \end{array}\right.$$
\begin{fact}\label{lefact}
 For all $y\in \R$, we have $1+a'(y)y\neq 0$.
\end{fact}
Indeed, if we have $1+a'(y)y= 0$ then the curve $t\mapsto (t,y)$ is a complete spacelike geodesic. As it is not closed, it contradicts the fact that the metric is spacelike Zoll. 

Thanks to Fact \ref{lefact}, we can write
\begin{eqnarray*}
   y'&=&\frac{e^{-a(y)}}{1+ya'(y)}\\
   x'&=&\frac{e^{-2a(y)}}{1+ya'(y)}\frac{1+y a'(y)-e^{-a(y)}}{y^2}
  \end{eqnarray*}
Therefore 
$$\frac{\partial x}{\partial y}=e^{-a(y)}\frac{1}{y^2}\left(1+ya'(y)-e^{-a(y)}\right)$$
as $a'(0)=0$ we see that  $\frac{\partial x}{\partial y}$ is well defined on $\R$ and smooth. It implies that $\gamma$ is the graph of a function, in particular it cuts each horizontal line exactly once. The fact that ${h}$ goes to infinity on both ends of $\eta$ says that $\gamma$ is defined  on $\R$.

Hence, the map $\Phi:(u,v)\mapsto \gamma(v)+(u,0)$ is a smooth diffeomorphism. Equation \eqref{system_gamma} exactly says that $\Phi^*g|_U$ has the desired form. Repeating this construction for any lightlike geodesic transverse to $K$ gives us  an atlas of $(C,g)$ such that the metric has the right expression. The last things to check are the transition functions.

If $\eta'$ is another lightlike geodesic and if we denote by $V$ its saturation, then there are  3 possibilities: either $U=U'$, either $U\cap U'=\emptyset$ or $V:=U\cap U'$ is an half plane of $U$ ($\{v>0\}$ or $\{v<0\}$). The only case to deal with is the third. In that case there exists a geodesic $\delta$ in $V$ that is  perpendicular to $K$  and such that the orthogonal symmetry relatively to $\delta$  sends $\eta$ on $\eta'$ (see \cite{BavardMounoud} for details). It is not difficult to check that this symmetry is the transition function we were looking for. It has the right expression up to a possible horizontal translation. However 
it is not difficult to modify the atlas so that these translations are trivial except one.

Reciprocally, it is easily checked that a Lorentzian cylinder admitting a parabolic atlas satisfies  the conditions on the curvature given in the statement.
\end{proof}
\begin{rema}
If $(C,g)$ is spacelike Zoll cylinder admitting a parabolic atlas $\mathcal A$, then the parameter $k$ of $\mathcal A$ is equal to the number $k$ in Theorem \ref{thm_conf_killing}.

 At the moment we do not know now if there exists a spacelike Zoll metric with a parabolic Killing field that  does not 
 admit a parabolic atlas.
\end{rema}
We are able to describe all the spacelike Zoll surfaces admitting a parabolic atlas:
\begin{thm}\label{theo_parabolic}
Let $(C,g)$ be a Lorentzian cylinder admitting a  parabolic atlas  $\mathcal A=\{(U_i,\phi_i); i\in \zkk \}$. 
The surface $(C,g)$ is spacelike Zoll if and only if the parameter $\tau$ of $\mathcal A$ vanishes and there exist  $k$ smooth functions  
$\kappa_0,\dots \kappa_{k-1}\colon \R\to\R$ 
such that 
 \begin{enumerate}
  \item\label{one} for all $t\in \R$, for all $j\in \zk$, $\kappa_j(t)\geq -1$;
  \item\label{two} all the functions $\kappa_j$ have  the same infinite Taylor expansion at $0$ and satisfy $\kappa_j(0)=\kappa_j'(0)=0$;
  \item\label{three} the function $\sum_{j} \kappa_j$ is odd;
  \item for all $i\in \zkk$ the function $f_i$ such that 
    $$g_i=\phi_i^{-1}{}^*g= y^2 dx^2+2dxdy+ f_i(y)dy^2,$$ 
    satisfies 
    $$f_i(y)=\left\{\begin{aligned}
                     \frac{1-(1+\kappa_{\lfloor i/2 \rfloor})^2}{y^2}&\ \text{if}\ y>0\\
                     \frac{1-(1+\kappa_{\lceil i/2 \rceil})^2}{y^2}&\ \text{if}\ y<0,
                    \end{aligned}
   \right.$$
   where $\lfloor.\rfloor$ (resp. $\lceil.\rceil$) is the lower (resp. upper) integral part.
 \end{enumerate}
\end{thm}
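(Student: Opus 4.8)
The plan is to reduce the geodesic equation to quadrature via Clairaut's first integral for $K=\partial_x$, and then to read off the closing and equal-length conditions as explicit integral identities in the profiles $f_i$. In each chart the metric is $g_i=y^2dx^2+2dxdy+f_i(y)dy^2$, so along a spacelike geodesic $\gamma=(x(t),y(t))$ the quantity $c:=g(\dot\gamma,\partial_x)=y^2\dot x+\dot y$ is constant. Imposing $g(\dot\gamma,\dot\gamma)=1$ and setting $D:=1-y^2f_i>0$, a short computation resting on the identity $c^2D-y^2(1-f_ic^2)=c^2-y^2$ gives $\dot y^2=\frac{c^2-y^2}{D}$ and $\dot x=\frac{c}{y^2}\pm\frac{\sqrt{c^2-y^2}}{y^2\sqrt D}$. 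Hence every spacelike geodesic oscillates in $y$ between the turning points $y=\pm|c|$, where it is tangent to $K$, in accordance with Proposition \ref{prop_cases}. Substituting the conjectured form $f_i(y)=\frac{1-(1+\kappa)^2}{y^2}$ makes $D=(1+\kappa(y))^2$ and $\sqrt D=1+\kappa$, which simultaneously explains condition \eqref{one} as the Lorentzian signature requirement ($D>0$) and renders the quadratures elementary.

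Next I would organize the global trajectory. By Proposition \ref{prop_atlases} the geodesic meets the $2k$ lightlike orbits $\{y=0\}$ of the charts in cyclic order, performing between two consecutive crossings one half-oscillation $y\colon0\to\pm|c|\to0$ that alternates between the half-planes $H^+$ and $H^-$; the passage from $U_i$ to $U_{i+1}$ is realized by the reflection $(x,y)\mapsto(-x+2/y,y)$, except for $\Phi_{2k-1,0}$, which carries the extra shift $\tau$. On $H^+$ chart $i$ uses $\kappa_{\lfloor i/2\rfloor}$ and on $H^-$ it uses $\kappa_{\lceil i/2\rceil}$, so the $k$ upward and $k$ downward half-oscillations sample $\kappa_0,\dots,\kappa_{k-1}$ on $y>0$ and $\kappa_0,\dots,\kappa_{k-1}$ on $y<0$, each exactly once. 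The reflection $x\mapsto -x+2/y$ is precisely what cancels the divergent $\frac{c}{y^2}$ part of $\dot x$ at the crossings, so the surviving $x$-increment over each half-oscillation is a convergent integral; this convergence already forces $\kappa=O(y^2)$ at $0$, i.e. $\kappa_j(0)=\kappa_j'(0)=0$, which is condition \eqref{two} and matches the curvature condition ($\sigma>0$, $d\sigma=0$) of Proposition \ref{prop_parabolicatlas}.

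Then I would compute the two invariants of a full turn. For the $g$-length one finds $dt=\frac{1+\kappa}{\sqrt{c^2-y^2}}\,dy$ on each segment, so summing the $2k$ half-oscillations and substituting $y\mapsto -y$ on the $H^-$ ones yields
$$L(c)=2\sum_{j=0}^{k-1}\int_0^{|c|}\frac{2+\kappa_j(y)+\kappa_j(-y)}{\sqrt{c^2-y^2}}\,dy=2k\pi+2\int_0^{|c|}\frac{\sum_{j}\bigl(\kappa_j(y)+\kappa_j(-y)\bigr)}{\sqrt{c^2-y^2}}\,dy.$$
The analogous computation of the total $x$-displacement, after composing the $2k$ reflections into a single $x$-translation, produces the same symmetrized integrand together with the constant $\tau$. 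Since $\phi\mapsto\int_0^{c}\frac{\phi(y)}{\sqrt{c^2-y^2}}\,dy$ is an Abel-type transform, hence injective on continuous functions, $L(c)$ is independent of $c$ if and only if $\sum_j\bigl(\kappa_j(y)+\kappa_j(-y)\bigr)\equiv0$, i.e. $\sum_j\kappa_j$ is odd (condition \eqref{three}); and the geodesics close simultaneously, the monodromy translation collapsing to the de Sitter value, if and only if in addition $\tau=0$. When these hold, every spacelike geodesic is simply closed of length $2k\pi$, so $(C,g)$ is spacelike Zoll, which establishes the converse direction by direct substitution.

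For the forward direction I would show that a spacelike Zoll $(C,g)$ with a parabolic atlas must take this shape: Proposition \ref{prop_parabolicatlas} supplies the normalization $h_i=y^2$ and the signature constraint, so putting $1+\kappa_i^{\pm}:=\sqrt{1-y^2f_i}$ on $H^{\pm}$ reproduces the displayed formula for $f_i$; the matching relations $h_{2i}=h_{2i+1}$ on $y>0$ and $h_{2i}=h_{2i-1}$ on $y<0$ from Proposition \ref{prop_atlases} identify these one-sided profiles with the $k$ functions $\kappa_0,\dots,\kappa_{k-1}$, and the analyticity of the transition functions forces all $\kappa_j$ to share a single infinite Taylor expansion at $0$ (the remaining part of condition \eqref{two}). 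Finally the Zoll hypothesis makes $L(c)$ and the rotation constant in $c$, which by the previous paragraph yields both $\tau=0$ and the oddness of $\sum_j\kappa_j$. The main obstacle is the bookkeeping of the second and third steps: correctly threading the $2k$ chart transitions so that the reflections annihilate the $\frac{c}{y^2}$ singularities and the monodromy reduces to the single symmetrized Abel integral, and then inverting that integral cleanly. The smoothness matching across the lightlike orbits—the shared Taylor expansion—is the other delicate point, as it is exactly what separates the smooth from the analytic members of the family.
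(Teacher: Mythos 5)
Your proposal is correct, and its skeleton is the paper's: the Clairaut reduction with turning points at $y=\pm c$ is Lemma \ref{lem_tangentparabolic}, the chart-by-chart bookkeeping of one full cycle through the $2k$ charts is Proposition \ref{prop_recoll}, and the final inversion step is an Abel-type injectivity statement, which in the paper is Lemma \ref{lem_Besse}. The genuine difference is how you decouple the two integral conditions. The paper works with closedness alone: the monodromy identity $\int_{-c}^c \frac{c\sum_i(\sqrt{1-y^2f_i(y)}-1)}{y^2\sqrt{c^2-y^2}}\,dy=-\tau$ must hold for all $c>0$, and Lemma \ref{lem_Besse} (constancy of this \emph{weighted} transform forces the generating function to be odd and the constant to vanish) yields oddness and $\tau=0$ simultaneously; in particular the paper's route shows that closedness of all spacelike geodesics already forces equal lengths within this family. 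You instead use both halves of the Zoll hypothesis: length-constancy plus injectivity of the classical Abel transform $\phi\mapsto\int_0^c\frac{\phi(y)}{\sqrt{c^2-y^2}}\,dy$ gives $\sum_j\bigl(\kappa_j(y)+\kappa_j(-y)\bigr)\equiv 0$, after which the monodromy collapses to $\tau$ and closedness gives $\tau=0$ for free. This buys a more standard analytic lemma and a trivial $\tau$-step, at the cost of needing the equal-length hypothesis in the forward direction, so your argument does not, as written, recover the paper's stronger intermediate statement ``all spacelike geodesics closed $\Rightarrow$ conditions''. Two slips to repair, neither fatal. First, the monodromy integrand is \emph{not} ``the same symmetrized integrand'' as the length one: it carries the weight $\frac{c}{y^2\sqrt{c^2-y^2}}$ rather than $\frac{1}{\sqrt{c^2-y^2}}$; what is true, and all your argument needs, is that it depends only on the even part $\sum_j\bigl(\kappa_j(y)+\kappa_j(-y)\bigr)$ together with $\tau$. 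Second, the shared infinite Taylor expansion at $0$ in condition \ref{two} is forced not by ``analyticity of the transition functions'' (which are the same for every choice of the $f_i$) but by smoothness of the odd-indexed profiles $f_{2j+1}$ across $y=0$, since these are built from $\kappa_j$ on $y>0$ and $\kappa_{j+1}$ on $y<0$; likewise $\kappa_j(0)=\kappa_j'(0)=0$ is automatic from smoothness of $f_{2j}$ and the definition $\kappa_j=\sqrt{1-y^2f_{2j}}-1$, so no convergence argument is needed for that part.
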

%
\begin{proof}[Proof of Theorem \ref{theo_parabolic}.]
Let $\mathcal A$ be a parabolic atlas of $(C,g)$. We denote as above by $g_i$ the expression of $g$ in the coordinates $(U_i,\phi_i)$.  We recall that there exist functions $f_i$ such that the $g_i$'s read as $y^2 dx^2+2dxdy+ f_i(y)dy^2$.

\begin{lem}\label{lem_tangentparabolic}
 Let $\gamma_i\,:\,t\mapsto(x(t),y(t))$ be a unit spacelike geodesic of $(\R^2,g_i)$. There exists $c>0$ such that $\gamma_i$ is contained between the lines $y=c$ and $y=-c$ and is tangent exactly once to each of these lines. Moreover the geodesic segment between these points satisfies
 $$\frac{\partial x}{\partial y}= \frac{c\sqrt{1-y^2f_i(y)}-\sqrt{c^2-y^2}}{y^2\sqrt{c^2-y^2}}.$$
\end{lem}

\begin{proof}[Proof of Lemma \ref{lem_tangentparabolic}]
Let $\gamma_i\,:\,t\mapsto(x(t),y(t))$ be unit spacelike geodesic of $(\R^2,g_i)$. It is well know that Killing vector fields induce first integrals for the geodesic field, more precisely  the fact that the vector field $\partial_x$ is Killing implies that $g_i(\partial_x,\gamma'_i)$ is constant. Therefore, there exists  $c\geq 0$ and $\epsilon_1\in \{\pm1\}$ such that:
\begin{equation} 
 \begin{cases}
 y^2x'+y'=\epsilon_1 c\\
 y^2x'^2+2x'y'+f_i(y)y'^2=1,
\end{cases}
\end{equation}
This system of equations can be solved if and only if $c^2-y^2\geq 0$ proving that $c\neq 0$ and $-c\leq y\leq c$. Solving it we find:
\begin{eqnarray*}
  x'&=&\frac{\epsilon_1c(1-y^2f_i(y))+\epsilon \sqrt{(1-y^2f_i(y))(c^2-y^2)}}{y^2(1-y^2f_i(y))}\\
  y'&=&-\epsilon \sqrt{\frac{c^2-y^2}{1-y^2f_i(y)}},
  \end{eqnarray*}
  where $\epsilon\in \{\pm 1\}$. It implies that 
  $$\frac{\partial x}{\partial y}= \frac{-\epsilon\epsilon_1 c\sqrt{1-y^2f_i(y)}-\sqrt{c^2-y^2}}{y^2\sqrt{c^2-y^2}}.$$
The number $\epsilon_1$ determines the orientation of the geodesic and $\epsilon$ changes only when $y=\pm c$.

The fact that for any $y_0$ such that $0<|y_0|<c$ the integral
$$\int_0^{y_0} \frac{-c\sqrt{1-y^2f_i(y)}-\sqrt{c^2-y^2}}{y^2\sqrt{c^2-y^2}}dy$$
diverges implies that $\gamma$ is tangent at most once to each line $y=\pm c$.

The fact that for any $y_0\in ]0,c[$ and any $y_1\in ]-c,0[$  the integrals
\begin{eqnarray*}
 \int_{-c}^{c} \frac{c\sqrt{1-y^2f_i(y)}-\sqrt{c^2-y^2}}{y^2\sqrt{c^2-y^2}}dy\\
 \int_{y_0}^c \frac{-c\sqrt{1-y^2f_i(y)}-\sqrt{c^2-y^2}}{y^2\sqrt{c^2-y^2}}dy\\
 \int_{-c}^{y_1}\frac{-c\sqrt{1-y^2f_i(y)}-\sqrt{c^2-y^2}}{y^2\sqrt{c^2-y^2}}dy
\end{eqnarray*}
converge implies that $\gamma$ is tangent at least once to each line $y=\pm c$. \end{proof}
%
\begin{prop}\label{prop_recoll}
 Let $\gamma$ be a unit spacelike geodesic of $(C,g)$. The geodesic $\gamma$ is closed if and only if 
 $$ \int_{-c}^c c\frac{\sum_{i\in\zkk} (\sqrt{1-y^2f_i(y)}-1)}{y^2\sqrt{c^2-y^2}}dy=-\tau,$$
 where $\tau$ is the term of translation appearing in $\Phi_{2k-1,0}$ and $c=|g(\gamma',K)|$.
\end{prop}
\begin{proof}[Proof of Proposition \ref{prop_recoll}.]
Let $\gamma$ be a geodesic of $g$ and $p_0,\dots,p_l,\dots$ the points where $\gamma$ is tangent to the Killing field $K$ (such points exists according to Lemma \ref{lem_tangentparabolic}). For each geodesic segment $[p_l,p_{l+1}]$, there exist an open set $U_{i_l}$ containing it. Clearly, $i_{l+1}=i_l+1$ and $i_{0}=i_{2k}$. Without loss of generality we can suppose $i_l=l \mod 2k$.

If $p_0$ has coordinates $(x_0,-c)$ on $U_{0}$ then the coordinates of $p_1$ on $U_{0}$ are 
$$
\left(x_0+\int_{-c}^{c} \frac{c\sqrt{1-y^2f_{0}(y)}-\sqrt{c^2-y^2}}{y^2\sqrt{c^2-y^2}}dy,c\right)
$$
It follows that the coordinates of $p_1$ on $U_{1}$ are 
$$\left(-\left[x_0+\int_{-c}^{c} \frac{c\sqrt{1-y^2f_{0}(y)}-\sqrt{c^2-y^2}}{y^2\sqrt{c^2-y^2}}dy\right]+\frac{2}{c},c\right)$$
and the coordinates of $p_2$ on $U_1$ are (remark that the orientation of $\gamma$ changed)
$$\left(-\left[x_0+\int_{-c}^{c} \frac{c\sqrt{1-y^2f_{0}(y)}-\sqrt{c^2-y^2}}{y^2\sqrt{c^2-y^2}}dy\right]+\frac{2}{c}-\int_{-c}^{c} \frac{c\sqrt{1-y^2f_{1}(y)}-\sqrt{c^2-y^2}}{y^2\sqrt{c^2-y^2}}dy,-c\right)$$
We can continue the same way, in order to  obtain the coordinates of the points $p_l$ on $U_{l-1}$ and $U_{l}$. In particular, we see that the coordinates of $p_{2k}$ on $U_0$ are
$$\left(x_0+\sum_{l=0}^{2k-1} \int_{-c}^{c} \frac{c\sqrt{1-y^2f_{l}(y)}-\sqrt{c^2-y^2}}{y^2\sqrt{c^2-y^2}}dy-\frac{4k}{c}+\tau,-c\right).$$

It implies that $\gamma$ is closed if and only if $p_0=p_{2k}$ if and only if
\begin{equation}\label{eq_closed}
\sum_{l=0}^{2k-1} \int_{-c}^{c} \frac{c\sqrt{1-y^2f_{l}(y)}-\sqrt{c^2-y^2}}{y^2\sqrt{c^2-y^2}}dy=\frac{4k}{c}-\tau. 
\end{equation}

We consider now the metric $g^0$ of constant curvature $1$ given by $g^0_i=y^2dx^2+2dxdy$ and $\gamma^0$ the $g^0$-spacelike geodesic starting horizontally from $p_0$. We denote by $p_l^0$ the points of tangency of $\gamma^0$ with $K$.  Doing the same computation as above we see that the coordinates of $p_{2k}^0$ on $U_0$ are 
$$(x_0+ 2k \int_{-c}^{c} \frac{c-\sqrt{c^2-y^2}}{y^2\sqrt{c^2-y^2}}dy-\frac{4k}{c}+\tau,-c)$$ 
Computing the integral above, we find that its value is $\frac 2c$ and therefore the coordinates of $p_{2k}^0$  are in fact
$$(x_0+\tau,-c)$$
(reproving that all the spacelike  geodesics of de Sitter space are closed). In order to  finish the proof, we just replace $\frac{4k}{c}$ by $2k\int_{-c}^{c} \frac{c-\sqrt{c^2-y^2}}{y^2\sqrt{c^2-y^2}}dy$ in \eqref{eq_closed}. 
\end{proof}

\begin{lem}\label{lem_Besse}
 If $h :\R\rightarrow \R$ is a function such that the function 
 $$H:c\mapsto\int_{-c}^{c}c\frac{h(y)}{y^2\sqrt{c^2-y^2}}dy$$
 is constant, then $h$ is odd and $H=0$.
\end{lem}
\begin{proof}  We first remark that $H(c)$ only depends on the even part of $h$. Thus we will assume that $h$ is even  and prove that it has to vanish.

We define a function $I$ on $\R^+$ by 
$$I(a)=\int_0^a \frac{H(t)}{\sqrt{a^2-t^2}} dt.$$

We have 
\begin{eqnarray*}
 I(a)&=& \int_0^a \frac{2t}{\sqrt{a^2-t^2}}\int_0^t\frac{h(s)}{s^2\sqrt{t^2-s^2}}ds\, dt\\
     &=& \int_0^a\frac{2h(s)}{s^2}\int_s^a \frac{t}{\sqrt{(a^2-t^2)(t^2-s^2)}}dt\,ds\\
     &=& \int_0^a \frac{2h(s)}{s^2}ds\int_0^{+\infty}\frac{dx}{1+x^2}=\pi\int_0^a \frac{h(s)}{s^2}ds
\end{eqnarray*}
with $x=\sqrt{\frac{t^2-s^2}{a^2-t^2}}$.

Moreover if $H$ is constant, we see by direct computation that $I$ is also constant. It follows from $I'=0$ that $h=0$.\end{proof}

 For $i\in \{0,\dots,k-1\}$ we define the function $\kappa_i$ by  $\kappa_i=\sqrt{1-y^2f_{2i}(y)}-1$. These functions clearly satisfy points \ref{one} and \ref{two} of the statement. 
It follows from Lemma \ref{lem_Besse} and Proposition \ref{prop_recoll} that the geodesics of $(C,g)$ are all closed if and only if the function $c\mapsto \sum_{i\in\zkk}\kappa_i$ is odd and $\tau=0$.
\end{proof}

\begin{cor}\label{coro_parabolic}
There exist smooth M\"obius strips all of whose spacelike geodesics are closed with non constant curvature and whose orientation cover admits a 
parabolic atlas. Moreover, if the orientation cover of a non constant curvature  M\"obius strip all of whose spacelike geodesics are closed admits a parabolic atlas then it is not analytic and it is $C^0$-conformal to a $k$-cover of de Sitter with $k>1$.
\end{cor}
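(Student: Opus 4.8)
The plan is to extract both assertions from the parametrisation of parabolic spacelike Zoll cylinders by the functions $\kappa_0,\dots,\kappa_{k-1}$ of Theorem \ref{theo_parabolic}, after translating the presence of an antipodal involution into a parity condition on $\sum_j\kappa_j$. Write $(C,g)$ for the orientation cover, a parabolic spacelike Zoll cylinder with atlas $\{(U_i,\phi_i)\}_{i\in\zkk}$ and data $\kappa_0,\dots,\kappa_{k-1}$, and let $A\colon C\to C$ be the deck transformation of the cover over the M\"obius strip. Then $A$ is a fixed-point-free, orientation-reversing isometric involution, and since the strip carries the induced Killing field, $A$ commutes with the flow of $K=\partial_x$.

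First I determine $A$ in the atlas. Commuting with $\partial_x$ gives $A(x,y)=(\pm x+\psi(y),\phi(y))$ in each chart, and matching the $dx^2$-coefficient of $(\phi_i^{-1})^*g=y^2dx^2+2dxdy+f_i(y)dy^2$ forces $\phi(y)=\pm y$. Preserving the cross term $2\,dx\,dy$ then forces $\psi$ either to be constant or to contain a term $2/y$; since $A$ must extend across the lightlike orbits $\{y=0\}$ of $K$, the options with a $2/y$ (which send a lightlike orbit to infinity) are excluded, and among the rest the only orientation-reversing involution is $A(x,y)=(-x+c,-y)$. Thus $A$ reflects $y$, sends $U_i$ to $U_{i+k}$, and is an isometry precisely when $f_i(y)=f_{i+k}(-y)$ for all $i$; regularity across $\{y=0\}$ together with orientation reversal forces $k$ to be odd. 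Using $\kappa_j\ge -1$ (condition (\ref{one})) to take positive square roots, $f_i(y)=f_{i+k}(-y)$ becomes a linear relation of the form $\kappa_{\lfloor i/2\rfloor}(y)=\kappa_{\lceil(i+k)/2\rceil}(-y)$ on $y>0$; summing over $i\in\zkk$, where each $\kappa_j$ appears the same number of times on both sides, shows that $\Sigma:=\sum_j\kappa_j$ is even. As Theorem \ref{theo_parabolic}(\ref{three}) makes $\Sigma$ odd, we conclude $\Sigma\equiv 0$, and this is the identity that powers both halves of the corollary.

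For existence I run this in reverse with $k=3$. Choose a nonzero smooth function $u$ on $(0,\infty)$ that is flat at $0$ with $|u|\le 1$, set $\kappa_0,\kappa_1,\kappa_2$ equal to $u,-u,0$ on $(0,\infty)$, and extend each to $\R$ by the rule $f_i(y)=f_{i+k}(-y)$; flatness makes the extensions smooth and makes condition (\ref{two}) automatic (all Taylor expansions vanish), $|u|\le1$ yields (\ref{one}), and $\Sigma\equiv 0$ is in particular odd, so (\ref{three}) holds. Theorem \ref{theo_parabolic} then produces a parabolic spacelike Zoll cylinder $(C,g)$; since $\kappa_0\ne 0$ and de Sitter (where $\kappa_j\equiv0$) is the only constant-curvature member of the family (cf. the discussion following Definition \ref{def_para}), the curvature is nonconstant. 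The extension was arranged exactly so that $A(x,y)=(-x+c,-y)$ glues to a global isometric involution; it is fixed-point-free because $U_i\cap U_{i+k}=\emptyset$ for $k\ge 2$ and orientation-reversing because $k=3$ is odd, so $C/\langle A\rangle$ is a smooth M\"obius strip of nonconstant curvature whose orientation cover is $(C,g)$. Finally every spacelike geodesic of the strip is the image of a spacelike geodesic of $(C,g)$, hence of a closed curve, and is therefore closed.

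For the converse I feed $\Sigma\equiv 0$ back in. If $k=1$ then $\Sigma=\kappa_0\equiv0$, so $g$ has constant curvature, contrary to assumption; hence $k>1$, and Theorem \ref{thm_conf_killing} yields that $(C,g)$ is $C^0$-conformal to the $k$-fold cover of de Sitter space. If moreover $(C,g)$ were analytic, the remark following Definition \ref{def_para} would force all the $f_i$ to coincide, hence all $\kappa_j$ equal to a single $\kappa$, and then $k\kappa=\Sigma\equiv 0$ gives $\kappa\equiv 0$ and constant curvature, again a contradiction; therefore the orientation cover is not analytic. I expect the main difficulty to lie in the first paragraph: verifying that only the $y$-reflecting normal form survives across the lightlike orbits (so that $k$ is odd), and that the reflected gluing $f_i(y)=f_{i+k}(-y)$ assembles into a genuine involution compatible with the transition maps $\Phi_{i,i+1}$ and with $\tau=0$. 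Once $\Sigma$ is known to be even, the remaining implications are immediate.
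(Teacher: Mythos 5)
Your skeleton matches the paper's proof (parametrize by Theorem \ref{theo_parabolic}, turn the deck involution into a reflection relation on the $\kappa_j$, deduce rigidity, and build a $k=3$ example by quotienting by a point-reflection involution), and your parity bookkeeping ($\Sigma$ even from the involution, $\Sigma$ odd from Theorem \ref{theo_parabolic}, hence $\Sigma\equiv 0$) is a clean systematization. But the step on which the entire rigidity half rests --- excluding the translation normal form $A(x,y)=(x+c,y)$, $U_i\to U_{i+k}$, so that only the $y$-reflecting form survives --- is not established, and the argument you give for it is incorrect. The point reflection $(x,y)\mapsto(-x+c,-y)$ has Jacobian determinant $+1$, so at the chart level it is \emph{not} orientation-reversing; and as maps of $C$ the two candidate forms behave identically with respect to orientation. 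Indeed the transition maps $\Phi_{i,i+1}$ have Jacobian determinant $-1$, so consecutive charts of a parabolic atlas induce opposite orientations of $C$; consequently \emph{any} chart map of determinant $+1$ from $U_i$ to $U_{i+k}$ reverses the orientation of $C$ exactly when $k$ is odd, whichever of the two forms it has. This is precisely the subtlety the paper flags in its own proof (``despite appearances \dots it does not preserve the orientation''), and your existence paragraph (``orientation-reversing because $k=3$ is odd'') implicitly uses the correct criterion, contradicting the exclusion argument of your first paragraph.

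Why this matters: if the deck transformation could be of translation type, the isometry condition would read $f_i(y)=f_{i+k}(y)$ --- no reflection in $y$ --- which gives no parity information on $\Sigma$, and then neither ``$k=1\Rightarrow$ constant curvature'' nor ``analytic $\Rightarrow$ constant curvature'' follows. The translation form has to be excluded by the gluing combinatorics instead: for $k$ odd it would send the half-plane $H^+$ shared by $U_{2i}$ and $U_{2i+1}$ into the upper half-planes of $U_{2i+k}$ and $U_{2i+k+1}$, which are \emph{different} regions of $C$ (the first is glued to $U_{2i+k-1}$, the second to $U_{2i+k+2}$), contradicting well-definedness; and the remaining combinatorial possibility, a deck transformation reversing the cyclic order of the lightlike orbits of $K$, is excluded because it either preserves a lightlike orbit (acting on it by $x\mapsto -x+\mathrm{const}$) or maps some $U_i$ to an adjacent chart, and in both cases has a fixed point. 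You yourself flag exactly this verification as ``the main difficulty'' and leave it open, so as written the proposal has a genuine gap at its decisive point. A smaller inaccuracy in the same paragraph: $K$ does not descend to the M\"obius strip --- the actual involution satisfies $A_*K=-K$ --- so ``$A$ commutes with the flow of $K$'' should be ``$A_*K=\pm K$'', which in turn needs the one-dimensionality of the Killing algebra (available here since the curvature is non-constant); the $\pm$ you insert in the normal form silently repairs this, but the justification should be stated.
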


\begin{proof} Let $(C,g)$ be a  parabolic spacelike Zoll cylinder and $\mathcal A$ be a parabolic atlas of $(C,g)$. 
If $(C,g)$ is analytic (or if $k=1$) then the functions $\kappa_i$ given by Theorem \ref{theo_parabolic} have to be equal and therefore odd.
It follows that $(C,g)$ cannot be the lift of a metric on the M\"obius strip unless the $\kappa_i$ vanish.

Let $\kappa$ be a  smooth function on $\R$ with support on $[1,2]$ and values in $[-1,1]$. We define now three functions $\kappa_0$,$\kappa_1$ and $\kappa_2$ by $\kappa_0(t)=\kappa(t)$, $\kappa_1(t)=-\kappa(-t)$ and $\kappa_3(t)=-\kappa(t)+\kappa(-t)$.
These functions clearly verify points \ref{one}, \ref{two} and \ref{three} of the statement of Theorem \ref{theo_parabolic}.
Therefore they induce a spacelike Zoll metric $g$ on the cylinder (the parameters of the parabolic atlas being $k=3$ and $\tau=0$).

Let $\sigma:C\rightarrow C$ be the map sending $U_i$ on $U_{i+3}$ for any $i\in \Z/6\Z$ and that reads $(x,y)\mapsto (-x,-y)$ in coordinates.
This map is clearly a smooth involution with no fixed points. Moreover, despite appearances (because the orientations of the frame $(\partial_x,\partial_y)$ are opposite on $U_i$ and $U_{i+3}$), it does  not preserve the orientation. Hence $C/\sigma$ is a smooth M\"obius strip. By a direct computation, we see that the metric $g$ is invariant by $\sigma$ and therefore defines a spacelike Zoll metric  on the M\"obius strip. Further we can choose $\kappa$ such that 
the curvature of $g$ is non constant. \end{proof}
\begin{thm}\label{theo_nonsmoothconformal}
 There exists a spacelike Zoll cylinder, admitting a parabolic atlas with parameter $k>1$, that is not $C^2$-conformal to de Sitter and whose conformal boundary is not $C^2$.
\end{thm}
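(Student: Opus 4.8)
The plan is to realize the example inside the parabolic family of Theorem \ref{theo_parabolic} and to detect the failure of $C^2$-conformality on the conformal boundary. First I would fix $k>1$, set $\tau=0$, and choose smooth functions $\kappa_0,\dots,\kappa_{k-1}$ satisfying conditions (1)--(3) of Theorem \ref{theo_parabolic} but which are \emph{not} all equal. Having two adjacent charts carrying different $\kappa_l$ is what will create the defect, and this is impossible when $k=1$, where condition (3) forces the single function $\kappa_0$ to be odd. By Theorem \ref{theo_parabolic} the resulting cylinder $(C,g)$ is spacelike Zoll, and by Theorem \ref{thm_conf_killing} it is $C^0$-conformal to the $k$-fold cover $D_k$ of de Sitter space.

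The reduction step replaces ``$(C,g)$ is not $C^2$-conformal to $D_k$'' by ``the conformal boundary of $(C,g)$ is not $C^2$''. Suppose $(C,g)$ were $C^2$-conformal to $D_k$. Fixing conformal embeddings $F,G$ of $(C,g)$ and $D_k$ into $(S^1\times\R,d\varphi^2-dt^2)$ by Proposition \ref{P3}, the $C^2$-conformal equivalence conjugated by $F$ and $G$ yields a $C^2$-conformal homeomorphism between the two open regions. The splitting argument of Proposition \ref{prop_extend} (lift to $\R^2$, observe the map has the form $(x,y)\mapsto(\Phi_1(x),\Phi_2(y))$, and extend each factor over all of $\R$ using global hyperbolicity) extends it to a $C^2$-conformal diffeomorphism $\Xi$ of $S^1\times\R$ carrying $\partial F(C)$ onto $\partial G(D_k)$. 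Since $D_k$ has a $C^\infty$ conformal boundary (two smooth spacelike circles), $\partial F(C)=\Xi^{-1}(\partial G(D_k))$ would be $C^2$. Thus it suffices to produce a non-$C^2$ conformal boundary. By Corollary \ref{cor_bdy_killing} this boundary is piecewise smooth and spacelike, smooth away from the finitely many vertices that are the images of the lightlike orbits of $K$ (the parabolic fixed points of the extended field $\overline K$ of Corollary \ref{C1}); so the only possible defect is at these vertices.

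The core is therefore a local analysis near one vertex. There the lightlike orbit of $K$ is tangent to the boundary, so the two smooth spacelike arcs meeting at the vertex share the null tangent direction; this is what should force the boundary to be $C^1$. To test $C^2$ I would compute the one-sided second-order behaviour of the two arcs in terms of the adjacent charts' data. Parametrising spacelike geodesics by their Clairaut constant $c=|g(\dot\gamma,K)|$ as in Lemma \ref{lem_tangentparabolic} and Proposition \ref{prop_recoll}, the reflection of lightlike curves at the boundary, and hence the shape of each arc, is governed by the functions
$$H_l(c)=\int_{-c}^{c}\frac{c\,\kappa_l(y)}{y^2\sqrt{c^2-y^2}}\,dy .$$
Conditions (1)--(2), namely that all $\kappa_l$ share the same infinite jet at $0$ with $\kappa_l(0)=\kappa_l'(0)=0$, make the $c\to 0^+$ asymptotic expansions of all the $H_l$ coincide, which is precisely what yields the $C^1$-matching; the discrepancy between adjacent arcs is therefore a \emph{global} effect, carried by the values of the $H_l$ (equivalently the integrals $\int_0^{\pm\infty}\kappa_l(y)/y^2\,dy$), which differ as soon as the corresponding $\kappa_l$ differ. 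Transported through the conformal compactification near the parabolic vertex, where the boundary parameter is a transcendental reparametrisation of $c$, this global discrepancy is what I expect to turn into a genuine jump of the second derivative of the boundary curve.

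The main obstacle is exactly this last computation: verifying that the assembled boundary is $C^1$ but that its second derivatives from the two sides of a vertex differ, which requires controlling the reparametrisation from the Clairaut parameter to the graph parameter of the boundary. Once the jump of the second derivative is expressed as a nonzero functional of the adjacent $\kappa_l$, I would conclude by choosing, among the admissible families with $k>1$, functions realising a nonzero jump: for instance a single off-centre bump placed in $\kappa_0$ on $[1,2]$ and compensated in another $\kappa_l$ so that $\sum_l\kappa_l$ remains odd, which makes $\int_0^{\infty}\kappa_0/y^2\,dy\neq\int_0^{\infty}\kappa_l/y^2\,dy$. This produces a spacelike Zoll cylinder with a parabolic atlas of parameter $k>1$ whose conformal boundary, hence whose conformal class, is not $C^2$.
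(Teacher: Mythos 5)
Your strategy is, in essence, the paper's: produce a parabolic spacelike Zoll cylinder whose adjacent charts carry different $\kappa_l$, and detect the failure of $C^2$-regularity at the parabolic points through the discrepancy of the half-line integrals of $\kappa_l(y)/y^2$. (The paper packages the invariant as the ``first reflexion map'' $P$ between the spaces of lightlike leaves, which is conformally invariant by construction, rather than as the boundary curve itself via your extension reduction through Proposition \ref{prop_extend}; the two viewpoints are interchangeable and the paper explicitly passes between them.) Your reduction step is sound, and your final choice of data (a bump in $\kappa_0$ on $[1,2]$ compensated in another $\kappa_l$ so that $\sum_l\kappa_l$ stays odd while the integrals differ) does produce admissible examples.

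The genuine gap is precisely what you label the ``main obstacle'': the entire analytic content of the theorem is the verification that, near a vertex, the reflexion map (equivalently the boundary) is $C^1$ but has a second-derivative jump proportional to $\delta_1-\delta_0$, where $\delta_i=\lim_{s\to-\infty}\int_0^{s}\kappa_i(\sigma)/\sigma^2\,d\sigma$. You only state that you \emph{expect} this; the paper proves it in Lemma \ref{lem_PPM}: in the charts the lightlike geodesics are graphs of $h_i+\mathrm{const}$ or of $2/y-h_i+\mathrm{const}$, where $h_i$ is the primitive of $\kappa_i(s)/s^2$ vanishing at $0$, and chasing a leaf through the transition map $\Phi_{0,1}$ and matching vertical asymptotes gives $\overline P(y)=F^{-1}\bigl(\tfrac{2}{y}+\delta_1-h_0(y)\bigr)$ for $y>0$ and $\overline P(y)=G^{-1}\bigl(-\tfrac{2}{y}-\delta_0+h_0(y)\bigr)$ for $y<0$, whence $\overline P(y)=-y+\delta_1y^2+o(y^2)$ on one side and $-y+\delta_0y^2+o(y^2)$ on the other. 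Moreover, two of your guiding heuristics would lead you astray if followed literally. First, the shape of the boundary is \emph{not} governed by your $H_l(c)=\int_{-c}^{c}c\,\kappa_l(y)/(y^2\sqrt{c^2-y^2})\,dy$: those Clairaut-kernel integrals control the closing-up of \emph{spacelike} geodesics (Proposition \ref{prop_recoll}, Lemma \ref{lem_Besse}), i.e.\ the Zoll property, whereas the reflexion/boundary behaviour is carried by the \emph{lightlike} geodesics, hence by the primitives $h_l$ and their limits $\delta_l$; the two families of integrals are not equivalent. Second, the two boundary arcs at a vertex do not ``share the null tangent direction'': by Corollary \ref{cor_bdy_killing} the boundary is spacelike, and the $C^1$-matching (common spacelike tangent, slope $-1$ in the coordinates above) comes only out of the computation, not from a tangency argument. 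A side remark: once Lemma \ref{lem_PPM} is available, your flat-at-$0$ bump construction actually fits the hypotheses of Theorem \ref{theo_parabolic} more comfortably than the paper's own choice ($\kappa_0,\kappa_1$ equal to $y^2$ near $0$ clashes with conditions (2)--(3), which force the common jet at $0$ to be odd), and it already yields examples with $k=2$.
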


\begin{proof}
Let $(C,g)$ be a time-oriented spacelike Zoll admitting a parabolic atlas and let $\F_1$ and $\F_2$ be its lightlike foliations. 
We denote by $\LL_1$ and $\LL_2$ their spaces of leaves. As $(C,g)$ is globally hyperbolic, we know that $\LL_i$ are diffeomorphic to circles. The time orientation and the orientation of $(C,g)$ define an orientation on the $\LL_i$.

We define the first reflexion map $P$ (for Ping) from $\LL_1$ to $\LL_2$ that associates to a lightlike geodesic $\eta\in \LL_1$ the lightlike geodesic $\bar \eta 
\in \LL_2$ such that $\eta$ and $\bar\eta$ intersects on the future conformal boundary of $(C,g)$. It follows from the fact that  the 
boundary contains no lightlike parts, see Corollary \ref{cor_bdy_killing}, that $P$ is well defined and continuous. Actually, we do not need to have a conformal 
embedding in the flat cylinder to define the map $P$, $P(\eta)$ is the unique geodesic such that $\eta\cap P(\eta)=\emptyset$ and  any $\eta'\in \LL_1$ sufficiently closed to $\eta$ and on one side intersects $P(\eta)$.

Clearly, any smooth parametrized transversal cutting at most once each leaf of $\F_i$ defines smooth local coordinates on $\LL_i$. It follows from the definition of $P$ in terms of reflexion on the future conformal boundary that $P$ is smooth where  the boundary is spacelike and smooth. Moreover, if it is smooth but lightlike at a point then the graph of  $P$ has a horizontal or vertical tangent at this point therefore this property can be read off of $P$.
It follows from the other definition, that the regularity of $P$ is a conformal invariant of $(C,g)$.  In particular, if $(C,g)$ is conformal to a finite cover of de Sitter space then $P$ has to be smooth. Corollary \ref{cor_bdy_killing} implies that $P$ is smooth except maybe at points of $\LL_1$ corresponding to  lightlike orbits of $K$.
Let us look at $P$ at a neighborhood of such a leaf.

Let $\eta_0\in \LL_1$  that is a lightlike orbit of $K$. For example $\eta_0$ is the curve contained in $U_0$ whose equation is $y=0$. The curve $y\mapsto (0,y)$ 
is transversal to $\F_1$ and cuts each element of $\F_1$ at most once, therefore it defines local coordinates on $\LL_1$ around $\eta_0$. Possibly changing the 
time orientation, we can assume that the leaf $P(\eta_0)$ is the geodesic $\{y=0\}$ contained in $U_1$. We  define as above coordinates on $\LL_2$ around 
$P(\eta_0)$.

\begin{lem}\label{lem_PPM} Let $\kappa_0$ and $\kappa_1$ be the functions given by Theorem \ref{theo_parabolic}. For $i=0$ or $1$, let $h_i$  be the primitive vanishing at $0$ of $s\mapsto \frac{\kappa_i(s)}{s^2}$ and let $\delta_i=\lim_{s\rightarrow -\infty}h_i(s)$. Denoting by $\overline P$ the expression of $P$ in the coordinates defined above, we have for $y>0$
 $$\overline P(y)= \left\{ 
 \begin{aligned}
 F^{-1}{\left(\frac {2}{y}+\delta_1-h_0(y)\right)} & \ \text{if}\  y>0 \\
 G^{-1}{\left(-\frac{2}{y}-\delta_0+h_0(y)\right)} & \ \text{if}\ y<0
 \end{aligned}
 \right.$$
 where $F$ is the map defined for $z<0$ by  $F(z)=-\delta_1+h_1(z)-\frac{2}{z}$ and 
$G$ the map defined for $z>0$ by $G(z)=\delta_0+\frac{2}{z}-h_0(z)$. 
\end{lem}
\begin{proof}[Proof of Lemma \ref{lem_PPM}]
On $U_i$ the metric reads $y^2dx^2+2dxdy+f_i(y)dy^2$ therefore any lightlike geodesic of $U_i$ different from $\{y=0\}$ is transverse to $\partial_x$. The vector fields defined $\frac{\kappa_{\lfloor i/2 \rfloor}(y)}{y^2}\partial_x+\partial_y$ and $-\frac{2+\kappa_{\lfloor i/2 \rfloor}(y)}{y^2}\partial_x+\partial_y$ for $y>0$ and by $\frac{\kappa_{\lceil i/2 \rceil}(y)}{y^2}\partial_x+\partial_y$ and $-\frac{2+\kappa_{\lceil i/2 \rceil}(y)}{y^2}\partial_x+\partial_y$ for $y<0$  are lightlike and smooth. Therefore any lightlike geodesic of $\{y>0\}$ is the graph of a function $y\mapsto \int \frac{\kappa_{\lfloor i/2 \rfloor}(s)}{s^2}ds=h_i(y)+\opna{cst}$ or 
$y\mapsto \int -\frac{2+\kappa_{\lfloor i/2 \rfloor}(s)}{s^2}ds=\frac{2}{y}-h_{\lfloor i/2 \rfloor}(y)+\opna{cst}$ and any lightlike geodesic of $\{y<0\}$ is the graph of a function $y\mapsto \int \frac{\kappa_{\lceil i/2 \rceil}(s)}{s^2}ds=h_{\lceil i/2 \rceil}(y)+\opna{cst}$ or 
$y\mapsto \int -\frac{2+\kappa_{\lceil i/2 \rceil}(s)}{s^2}ds=\frac{2}{y}-h_{\lceil i/2 \rceil}(y)+\opna{cst}$.

Let $\eta$ be a  lightlike geodesic of $\F_1$ (the foliation that has $\{y=0\}$ as a leaf) intersecting $H_0^+$. 
It cuts $\{x=0\}$ at a point $(0,y_1)$. The image of $(0,y_1)$ by the transition function $\Phi_{0,1}$ is $(\frac{2}{y_1},y_1)$, therefore $\eta\cap H_1^+$ is the graph of a map $h_0+c_1$. As $(2/y_1,y_1)\in \eta$, we have $c_1=\frac{2}{y_1}-h_0(y_1)$. 
It follows that $\eta\cap H_1^-$ is the graph of the  map $h_1+c_1$. This implies that $\eta$ is asymptotic to the vertical line $\{x=\delta_1+c_1\}$ when $y$ goes to $-\infty$.

We now define the map $F$. Let $z$ be a negative number and $\bar \eta$ be the geodesic of $\F_2$ that contains the point $(0,z)$ of $H_1^-$.  Its intersection with $H_1^-$ is the graph of the function 
$y\mapsto \frac{2}{y}-h_1(y)+h_1(z)-\frac{2}{z}$ therefore it is asymptotic to the vertical line $\{x=-\delta_1+h_1(z)-\frac{2}{z}\}$ when $y$ goes to $-\infty$. We set therefore $F(z)=-\delta_1+h_1(z)-\frac{2}{z}$. 
By definition the  function $F$ is strictly increasing and therefore invertible.

\begin{figure}[h!]
 \labellist
 \small\hair 2pt
 \pinlabel {$\eta$} at 220 20
 \pinlabel {$\scriptstyle{(0,y)}$} at 180 215
 \pinlabel {$\scriptstyle(2/y,y)$} at  830 220
 \pinlabel {$\eta$} at 830 360 
 \pinlabel {$\eta$} at 50 360
 \pinlabel {$P(\eta)$} at 600 167
 \pinlabel {$\scriptstyle(0,\overline P(y))$} at 705 125
 \pinlabel {$\Phi_{0,1}$} at 480 270
 \endlabellist
 \centering
\includegraphics[scale=.5]{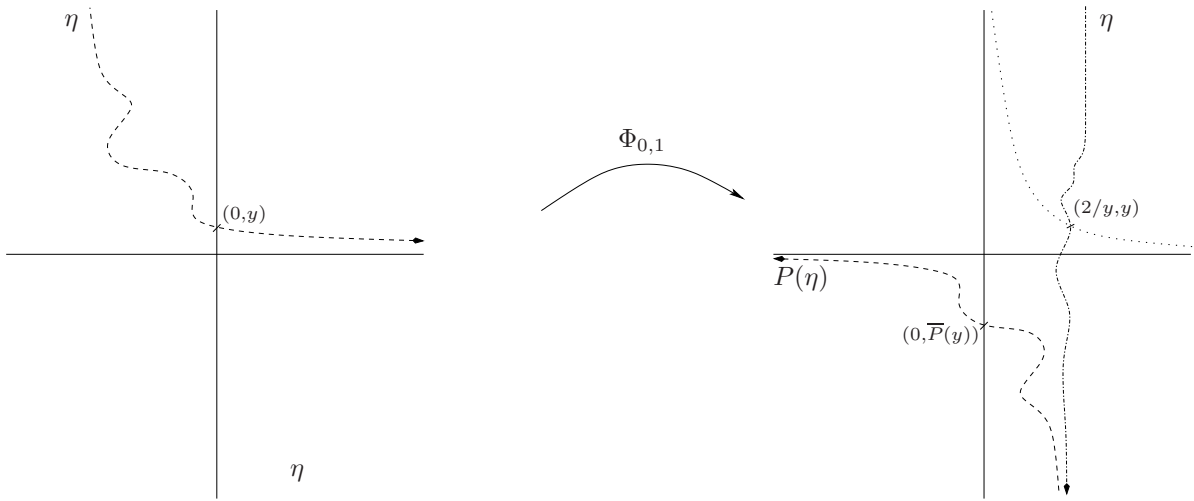}
 \caption{The map $P$ for $y>0$.}\label{fig_PPPneg}
\end{figure}

We have $\bar \eta=P(\eta)$ if and only if $\eta\cap H_1^-$ and $\bar \eta\cap H_1^-$ are asymptotic to the same vertical line. Indeed, if $\eta$ is asymptotic to $\{x=a\}$ and $\bar\eta$ to $\{x=b\}$, then for $b<a$ the curves have to intersect and so $\bar\eta\neq P(\eta)$. If $a>b$ and $\bar \eta=P(\eta)$ then the leaf of $\F_1$  asymptotic to $\{x=\frac{a+b}{2}\}$ has to cut  $\bar\eta$. But as $b<\frac{a+b}{2}$ they have to cut twice which is impossible since the foliations $\F_1$ and $\F_2$ are transverse.
It follows that $\bar \eta=P(\eta)$ if and only if $F(z)=\delta_1+\frac{2}{y_1}-h_0(y_1)$ i.e.\ that $\overline P(y)= F^{-1}(\delta_1+\frac{2}{y_1}-h_0(y_1))$ for any  $y>0$.

Let us see now what happens for a geodesic $\eta$ that intersects $H^-_0$. We denote by $(0,y_2)$ the intersection of $\eta$ with $\{x=0\}$. Note that  $y_2<0$. The curve $\eta\cap H_0^-$ is the graph of $y\mapsto \frac{2}{y}-h_0(y)+h_0(y_2)-\frac{2}{y_2}$. It is asymptotic
 to $\{x=-\delta_0+h_0(y_2)-\frac{2}{y_2}\}$ when $y$ goes to $-\infty$. Hence $P(\eta)\cap H_0$ is the graph of $y\mapsto h_0(y)-2\delta_0+h_0(y_2)-\frac{2}{y_2}$ and $P(\eta)$ cuts ${y=0}$ at the point $(-2\delta_0+h_0(y_2)-\frac{2}{y_2},0)$. 

\begin{figure}[h!]
 \labellist
 \small\hair 2pt
 \pinlabel {$\scriptstyle(0,y)$} at 140 120
 \pinlabel  {$P(\eta)$} at 260 360 
 \pinlabel  {$\eta$} at 10 170
 \pinlabel  {$P(\eta)$} at 610 360
 \pinlabel  {$\scriptstyle(0,\overline P(y))$} at 770 210
 \pinlabel{$\Phi_{0,1}$} at 447 277
 \endlabellist
 \centering \includegraphics[scale=.5]{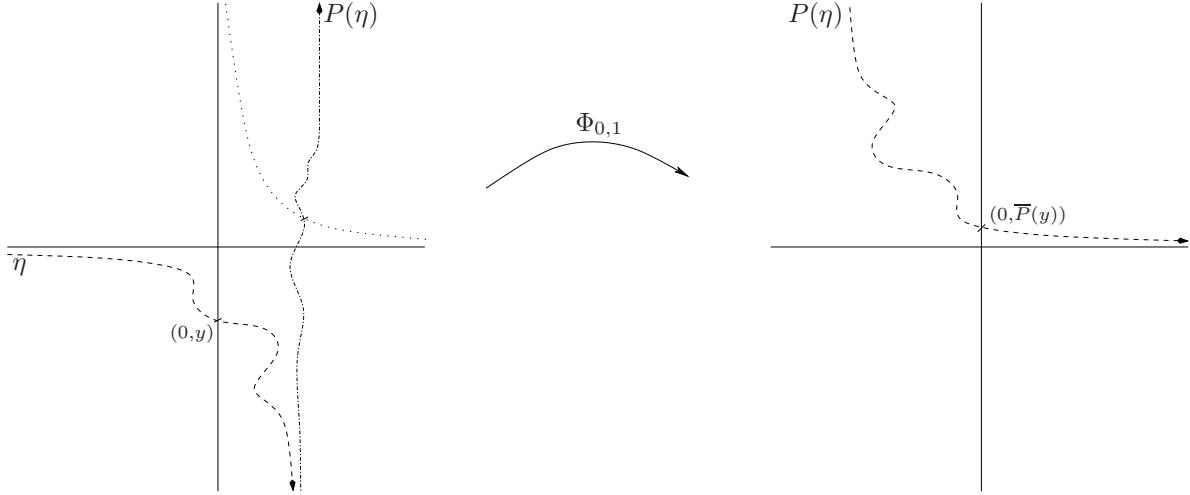}
 \caption{The map $P$ for $y<0$.}\label{fig_PPPpos}
\end{figure}

We now define the map $F$. Let $z$ be a positive number and $\bar \eta$ be the geodesic of $\F_2$ that contains the point $(0,z)$ of $H_1^+$. Its intersection with $H_0^+$ contains the point $(2/z,z)$ therefore it is the graph of the function $y\mapsto h_0(y)-h_0(z)+\frac{2}{z}$. Thus it cuts the set $\{y=0\}$ of $U_0$  at the point $(h_1(z)+\frac{2}{z},0)$. We define $G$ by $G(z)=-h_0(z)+\frac{2}{z}$. Similarly to the previous case, we have $\bar\eta=P(\eta)$ if and only if $G(z)=-2\delta_0+h_0(y_2)-\frac{2}{y_2}$ i.e.\ $\overline P(y)= G^{-1}(-2\delta_0+h_0(y_2)-\frac{2}{y_2})$ for any $y<0$.
\end{proof}

We now choose some spacelike Zoll cylinder $(C,g)$ admitting a parabolic atlas such that $\kappa_0$, $\kappa_1$ coincide with $y\mapsto y^2$ on  neighborhood of $0$ but $\delta_0\neq\delta_1$ (using Lemma \ref{lem_PPM}'s notations). Clearly, such a surface exits, but only for $k\geq 3$. Near $0$ we thus have $h_0(s)=h_1(s)=s$ and therefore for small $z$ and  large $y>0$ 
$$\begin{aligned}
   F(z)=&-\delta_1+z-\frac{2}{z}, &   G(z)=& \frac{2}{z}-z,   \\
   F^{-1}(y)=& \frac{y+\delta_1-\sqrt{(y+\delta_1)^2+8}}{2},\quad &
   G^{-1}(y)=& \frac{-y+\sqrt{y^2+8}}{2},
  \end{aligned}
  $$
where we used that $F^{-1}$ and $G^{-1}$ tend to $0$ when $y\rightarrow +\infty$. Hence, for small $y$,
\begin{equation}
 \overline P(y)=  \left\{ 
 \begin{aligned}
 \frac{1}{2}\left(\frac{2}{y}+2\delta_1-y-\sqrt{\left[\frac{2}{y}+2\delta_1-y\right]^2+8}\right) & \ \text{if}\  y>0 \\
 \frac{1}{2}\left(\frac{2}{y}+2\delta_0-y+\sqrt{\left[\frac{2}{y}+2\delta_0-y\right]^2+8}\right) & \ \text{if}\ y<0.
 \end{aligned}
 \right.
\end{equation}
Consequently $\overline P$ is $C^1$ but not $C^2$. It means that the metric is not $C^2$-conformal to a finite cover of de Sitter space and that its conformal boundary is not $C^2$. 
\end{proof}
%
\section{Elliptic case}\label{section_elliptic}
 Now we look at spacelike Zoll surface admitting a periodic Killing field. We will call them \emph{elliptic} spacelike Zoll surfaces.
This case is much simpler than the former one and very similar to the Riemannian one treated in \cite{Besse}. 
Moreover, in this case we don't need to  make any extra assumptions on the metric.
Our result in this case is the following:

\begin{thm}\label{theo_elliptic}
 If $(C,g)$ is an elliptic cylinder all of whose spacelike geodesics are closed then there exist a smooth function $f:\R\rightarrow \R$  such that $f(y)(y^2+1)-1<0$ for all $y\in \R$  and  numbers $l>0$ and $\tau>0$ such that $(C,l\,g)$ is isometric to the quotient of $\R^2$ endowed with the metric $(y^2+1)dx^2+2dxdy+f(y)dy^2$ by the translation $(x,y)\mapsto (x+\tau,y)$.
 
 Moreover such a metric has all its spacelike geodesics closed if and only if there exist $p,q$ in $\Z^*$, and an odd function $\kappa$ bounded below by $-\frac{p\tau}{2q\pi}$ such that $$f(y)=\frac{1-(\kappa(y)+\frac{p\tau}{2q\pi})^2}{y^2+1}.$$
 In particular elliptic  M\"obius strips all of whose spacelike geodesics are closed have constant curvature.
\end{thm}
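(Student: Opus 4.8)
The plan is to follow the Riemannian template of \cite{Besse} (chapter~4), adapted to the Lorentzian setting, in three stages: first produce the normal form, then translate closedness into an Abel-type integral condition, and finally handle the M\"obius quotient. For the normal form I would start from the fact that an elliptic $K$ is periodic and spacelike, so by Proposition \ref{prop_atlases}(1) the universal cover carries coordinates with $g=h(y)\,dx^2+2\,dx\,dy$, where $\alpha:=g(K,K)=h$ is positive with a unique local minimum and $h\to+\infty$ at $\pm\infty$, $K=\partial_x$, and the quotient is by $x\mapsto x+\tau$. At the minimum the orbit of $K$ is a closed spacelike geodesic, so by the curvature positivity used in the proof of Proposition \ref{prop_llgeod} the minimum is nondegenerate. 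After rescaling $g$ by $l$ so that $\min h=1$, the map $y\mapsto \mathrm{sign}(y-y_0)\sqrt{lh(y)-1}$ is a smooth change of the transverse coordinate bringing $\alpha$ to $y^2+1$; a subsequent shear $x\mapsto x+\chi(y)$ (with $\chi'=(\text{cross term}-1)/(y^2+1)$) normalises the cross term to $1$ without altering $\alpha$ or the period $\tau$. This gives $lg=(y^2+1)\,dx^2+2\,dx\,dy+f(y)\,dy^2$, the Lorentzian condition being exactly $\det g=(y^2+1)f-1<0$.

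For closedness I would integrate the geodesic equations using the Clairaut integral $c=g(\dot\gamma,\partial_x)$ together with $g(\dot\gamma,\dot\gamma)=1$, exactly as in Lemma \ref{lem_tangentparabolic}: one finds $\dot y^2=(c^2-(y^2+1))/(1-(y^2+1)f)$, so a unit spacelike geodesic oscillates between $y=\pm\sqrt{c^2-1}$ and $c^2\ge1$. Writing $w:=\sqrt{1-(y^2+1)f}>0$, the net $x$-advance over one $y$-period is
\[
\Delta x(c)=2c\int_{-Y}^{Y}\frac{w(y)}{(y^2+1)\sqrt{Y^2-y^2}}\,dy,\qquad Y=\sqrt{c^2-1}.
\]
Since $\Delta x(c)/\tau$ is continuous in $c$ and the geodesic closes iff this ratio is rational, it is a constant $p/q$. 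Using $\int_{-Y}^{Y}\frac{dy}{(y^2+1)\sqrt{Y^2-y^2}}=\pi/\sqrt{Y^2+1}=\pi/c$, the constant part of $w$ is forced to equal $\mu:=\frac{p\tau}{2q\pi}$, and the remaining condition becomes $\int_{-Y}^{Y}\frac{w-\mu}{(y^2+1)\sqrt{Y^2-y^2}}\,dy=0$ for all $Y$. The odd part of $w-\mu$ drops out automatically, and an Abel inversion identical to Lemma \ref{lem_Besse} (with the kernel $1/y^2$ replaced by $1/(y^2+1)$, which only changes the conclusion to $\int_0^a (w-\mu)_{\mathrm{even}}/(y^2+1)\,dy=\mathrm{const}$) kills the even part. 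Hence $\kappa:=w-\mu$ is odd and $f=(1-(\kappa+\mu)^2)/(y^2+1)$ with $\kappa\ge-\mu$ (strictly, by the Lorentzian condition); the converse is the direct check that this $f$ gives $w=\kappa+\mu$, annihilates the odd integral, and returns $\Delta x\equiv p\tau/q$, so every geodesic closes after $q$ periods.

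For the M\"obius statement I would pass to the orientation cover, an elliptic Zoll cylinder as above equipped with a fixed-point-free, orientation-reversing isometric involution $\sigma$. Since $\alpha=g(K,K)$ is $\sigma$-invariant and $K$ is unique up to sign (otherwise the cover is homogeneous, hence already of constant curvature, and we are done), we have $\sigma_*K=\pm K$, so $\sigma$ permutes the $\alpha$-levels. A fixed-point-free involution of the central closed geodesic $\{y=0\}$ is a half-period rotation, orientation-preserving there, so orientation reversal forces $\sigma$ to exchange $\{y>0\}$ and $\{y<0\}$, i.e.\ $y\mapsto-y$. Solving the isometry equations for $\sigma(x,y)=(x+c(y),-y)$ yields $c'(y)=2/(y^2+1)$, so $\sigma(x,y)=(x+2\arctan y+\tfrac{\tau}{2},-y)$, and the $dy^2$-component of $\sigma^*g=g$ reduces precisely to $f(-y)=f(y)$. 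Thus $f$, hence $w$, is even; combined with $\kappa=w-\mu$ odd this forces $\kappa\equiv0$, i.e.\ $f=(1-\mu^2)/(y^2+1)$. A direct Brioschi computation then shows this metric has constant Gaussian curvature $1/\mu^2$.

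The step I expect to be most delicate is the rigidity in the second stage: upgrading ``$\Delta x(c)/\tau$ rational for every $c$'' to ``$\Delta x$ constant'' (continuity plus connectedness of the parameter range) and then running the Abel inversion to annihilate the even part of $w-\mu$, so the adaptation of Lemma \ref{lem_Besse} to the kernel $1/(y^2+1)$ must be checked with care. The explicit determination of $\sigma$ in the third stage is the other sensitive point, since it is exactly the rigid form of $\sigma$ that produces the parity $f(-y)=f(y)$ clashing with the oddness of $\kappa$.
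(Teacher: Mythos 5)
Your proposal is correct and follows essentially the same route as the paper: the paper's own argument is precisely the sketch you flesh out — transplant the parabolic scheme of Section \ref{section_parabolic} (normal form via Proposition \ref{prop_atlases}, the Clairaut integral for the $x$-advance, the identity $\int_{-c}^c\frac{\sqrt{c^2+1}}{(1+y^2)\sqrt{c^2-y^2}}\,dy=\pi$, and an adaptation of Lemma \ref{lem_Besse} with kernel $1/(y^2+1)$), then settle the M\"obius case by a parity clash. The details you supply (nondegeneracy of the minimum of $\alpha$ via the curvature argument of Proposition \ref{prop_llgeod}, continuity plus rationality forcing $\Delta x/\tau$ to be a constant $p/q$, and the explicit classification of the fixed-point-free orientation-reversing isometric involution forcing $f$ even) are exactly the ``straightforward adaptations'' the paper leaves implicit.
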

%
{\bf Sketch of proof.} It can be proven simply by following the scheme of proof of Section \ref{section_parabolic}. The adaptation is straightforward.
In particular, the metric  has closed spacelike geodesics if  and only if there exists integers $p$ and $q$ such that for any $c>0$ 
$$2q\int_{-c}^c \frac{\sqrt{c^2+1}\sqrt{1-f(y)(y^2+1)}}{(1+y^2)\sqrt{c^2-y^2}}dy=p\tau,$$
but the cylinder is spacelike Zoll if and only if $p=1$. Using the fact that 
$$\int_{-c}^c\frac{\sqrt{c^2+1}}{(1+y^2)\sqrt{c^2-y^2}}dy=\pi$$
we find that the metric has closed spacelike geodesics if and only if 
\begin{equation}\label{eq_elliptic}
 \int_{-c}^c \frac{\sqrt{c^2+1}(\sqrt{1-f(y)(y^2+1)}-\frac{p\tau}{2\pi q})}{(1+y^2)\sqrt{c^2-y^2}}dy=0 \qquad \forall c>0
\end{equation}
Adapting Lemma \ref{lem_Besse} we see that it implies that $y\mapsto \sqrt{1-f(y)(y^2+1)}-\frac{p\tau}{2\pi q}$ is odd. 

 For metrics $g$ lifted from the M\"obius band this  implies that $\sqrt{1-f(y)(y^2+1)}-\frac{p\tau}{2\pi q}$ vanishes and $g$ has constant curvature.$\Box$
%
%
\section{The hyperbolic case}\label{section_hyperbolic}
We are interested now in spacelike Zoll surfaces with a Killing field that vanishes somewhere. Again we start by describing a family of Lorentzian atlases.
\begin{defi}\label{def_hyp}
Let $(S,g)$ be a Lorentzian surface and let $\mathcal A=\{(U_i,\phi_i); i\in\zkkk\}$ be an atlas of it. We denote by $\Phi_{ij}=\phi_{j}\circ\phi^{-1}_{i}$ the transition functions of $\mathcal A$.\\
We will say that $\mathcal {A}$ is a hyperbolic atlas with parameter $\tau$ of  $(S,g)$ if:
\begin{enumerate}
 \item for all $i\in\zkkk$, the image of $\phi_i$ is $\R^2$;
 \item the transition functions are the following:
 $$
 \begin{array}{lllll}
                    & \Phi_{2i,2i+1}:  & P_0  &\rightarrow  &P_0\\
                    &                  & (x,y)&\mapsto      & \left(-x+\log\left(\frac{y+1}{-y+1}\right),y\right),\\
\text{if\ }i\neq 0  & \Phi_{2i-2,2i+1}:& P_-  &\rightarrow  &P_-\\
                    &                  & (x,y)&\mapsto      & \left(-x+\log\left(\frac{y+1}{y-1}\right),y\right),\\
                    & \Phi_{4k-2,1}:   & P_-  &\rightarrow  & P_-\\
                    &                  &(x,y) &\mapsto      &\left(-x+\log\left(\frac{y+1}{y-1}\right)+\tau,y\right),\\
\text{if\ } i\neq 0 & \Phi_{2i-1,2i}:  & P_+  &\rightarrow  & P_+\\
                    &                  &(x,y) &\mapsto      &\left(-x+\log\left(\frac{y+1}{y-1}\right),y\right),\\
                    & \Phi_{4k-1,0}:   & P_+  &\rightarrow  & P_+\\
                    &                  &(x,y) &\mapsto      &\left(-x+\log\left(\frac{y+1}{y-1}\right)-\tau,y\right),\\
  \end{array}
 $$
 where $P_+=\{(x,y)\in \R^2;y>1\}$, $P_0=(x,y)\in \R^2;-1<y<1\}$ and  $P_-=\{(x,y)\in \R^2; y<-1\}$ and $\tau\in \R$;
 \item for all $i\in\{1,\dots 4k\}$, $$g_i=\phi_i^{-1}{}^*g= (y^2-1)\, dx^2+2dxdy+f_i(y)dy^2,$$
 where $f_i$ is a smooth function satisfying $1-(y^2-1)f_i(y)>0$ for all $y\in \R$.
\end{enumerate}
\end{defi}
\begin{figure}[h]\centering
 \includegraphics[scale=.33]{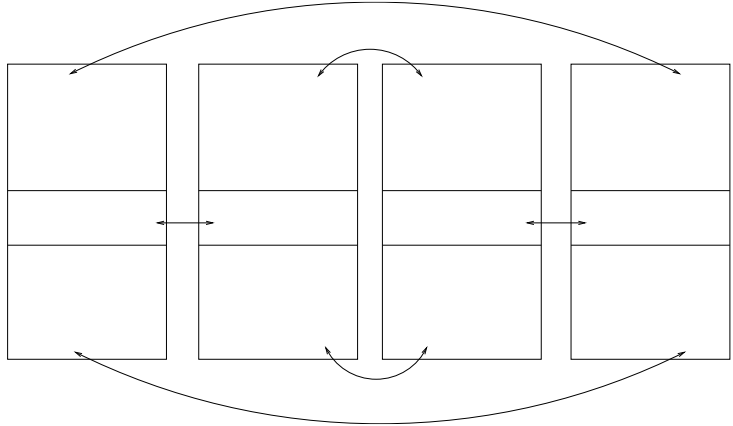}
\caption {The gluing picture when $k=1$.}
 \end{figure}
\begin{figure}[h]
\centering
 \includegraphics[scale=.4]{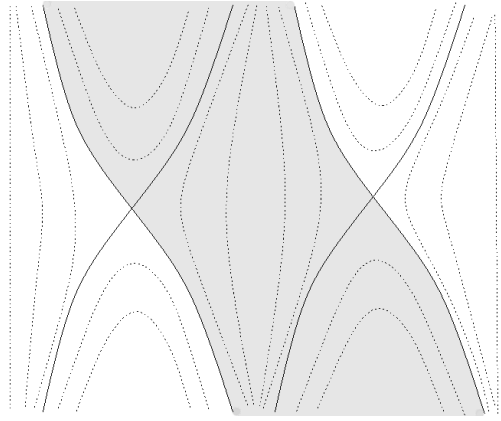} 
 \caption{The manifold, the open set $U_0$ and the Killing field  ($k=1$).}
\end{figure}
\begin{rema}
\begin{enumerate}
\item The function $\Phi_{i,j}$ are odd involutions.
\item If $(S,g)$ admits a hyperbolic atlas then $S$ is diffeomorphic to a cylinder with $2k$ points removed.
\item DeSitter space (with $2$ points removed) clearly admits such an atlas with   the parameters $k=1$, $\tau=0$ and  $f_1=\dots =f_4=0$.
 \item Note that a hyperbolic atlas induces an analytic structure on $C$. The Killing field $K$ is according to the conditions analytic as well. In opposition the 
metric $g$ need not be analytic, but the $g$-length of $K$ is again an analytic function on $C$. 
 \item The transition maps being isometries, the restriction of $f_{2i}$ and $f_{2i+1}$ coincide on $P_0$, $f_{2i-2}$ and $f_{2i+1}$  coincide on $P_-$ and $f_{2i-1}$ and $f_{2i}$ coincide on $P_+$. It follows that  it is sufficient to know the $f_{2i}$ in order to know all the $f_i$. 
 Further if $\mathcal A$ is  analytic then $f_0=f_1=\dots=f_{4k}$.
\end{enumerate}
\end{rema}
\begin{prop}\label{embedded}
 If $(S,g)$ is a Lorentzian surface admitting a hyperbolic atlas then it can be isometrically embedded into a Lorentzian cylinder.
\end{prop}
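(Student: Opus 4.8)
The plan is to realize $S$ inside a cylinder by \emph{filling in} the $2k$ zeros of $K$. A hyperbolic atlas presents $S$ as a cylinder with the zeros of $K$ deleted (Remark following Definition~\ref{def_hyp}), so it suffices to build a Lorentzian cylinder $(\hat C,\hat g)$ whose underlying set is $S$ together with $2k$ extra points, one for each missing zero, in such a way that $\hat g$ restricts to $g$ on $S$ and $K$ extends to a Killing field vanishing at the new points. The inclusion $S\hookrightarrow\hat C$ is then the desired isometric embedding, and the whole problem becomes local: around each added point I must produce a chart of $\hat C$ and a smooth Lorentzian extension of $g$.

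The zeros of $K$ arise as the common limits of its lightlike orbits, which in the chart $(U_i,\phi_i)$ are the lines $\{y=1\}$ and $\{y=-1\}$ where $\alpha=g(K,K)=y^2-1$ vanishes, reached as $x\to\pm\infty$. Near such a separatrix I would integrate the two lightlike line fields of $g_i$ to obtain null coordinates $(u,v)$ in which $K=u\partial_u-v\partial_v$ and the separatrix is a half of $\{v=0\}$. Since $\mathcal{L}_K g=0$ forces the conformal factor to be constant along the orbits of $K$, the metric takes the normal form $g=\lambda(uv)\,du\,dv$, whence $\alpha=-\lambda(uv)\,uv$; matching this with $\alpha=y^2-1$ pins down $\lambda$ and the change of variables from $(x,y)$ to $(u,v)$. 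Here the precise transition functions are essential: the terms $\log\frac{y+1}{y-1}$ and $\log\frac{y+1}{-y+1}$ are exactly those sending a finite value of $y$ near $\pm1$ in one chart to the asymptotic region $x\to\pm\infty$ in the next, so that a null coordinate (an exponential function of $\mp x$) persists as a genuine coordinate across the gluing and extends continuously by the value $0$ at the saddle point $u=v=0$.

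Around each added zero four sectors meet: two on which $K$ is timelike, coming from $P_0$-regions, and two on which $K$ is spacelike, from $P_+$ and $P_-$. Composing the transition maps $\Phi_{ij}$ I would check that traversing these four sectors once returns to the initial chart, so that the punctured neighborhood $\{0<u^2+v^2<\varepsilon^2\}$ closes up without monodromy; the parameter $\tau$ occurs only in the two distinguished transition functions $\Phi_{4k-2,1}$ and $\Phi_{4k-1,0}$ and measures the holonomy of a full loop around the cylinder, so it is invisible in the local gluing at a single saddle. Adjoining the center $u=v=0$ then yields a smooth chart, and since $S$ is a cylinder minus $2k$ points the filled surface $\hat C$ is again a cylinder. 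The main obstacle is the verification that $\hat g$ is smooth and Lorentzian at each saddle, i.e.\ that the conformal factor $\lambda$ is a smooth positive function of $uv$ up to $uv=0$: this is the only step requiring a genuine estimate on the $(x,y)\mapsto(u,v)$ change of coordinates rather than a formal computation, and it is precisely where the standing hypotheses $1-(y^2-1)f_i(y)>0$ and the compatibility relations among the $f_i$ imposed by the transition maps are used.
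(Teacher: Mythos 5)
Your overall strategy is the same as the paper's: adjoin one point for each missing zero of $K$ and reduce Proposition~\ref{embedded} to a local extension problem at each saddle (the paper likewise fills the holes one by one, treating $k=1$). The technical route differs: you propose null coordinates in which $K=u\partial_u-v\partial_v$ and $g=\lambda(uv)\,du\,dv$, whereas the paper works with the non-null coordinates $u=e^{x}$, $v=(y+1)e^{-x}$ and exhibits the filled metric explicitly, $h= v^2du^2+2(1+uv\,F)du\,dv+u^2F\,dv^2$ with $F(u,v)=f_0(uv-1)$ for $u>0$ and $F(u,v)=f_2(uv-1)$ for $u<0$, together with the maps $\psi_0,\dots,\psi_3$ identifying the charts. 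That difference of route would by itself be acceptable.

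There are, however, two genuine gaps. First, your assertion that $\tau$ ``is invisible in the local gluing at a single saddle'' is false, and it occurs at exactly the delicate point of your argument (closing up the punctured neighborhood). The transition $\Phi_{4k-2,1}$, which carries $+\tau$, is one of the gluings around the saddle adjacent to the $P_-$-regions of $U_0$ and $U_{4k-2}$, and $\Phi_{4k-1,0}$, carrying $-\tau$, enters the gluing at the other saddle; this is visible in the paper's own proof, where $\psi_2(u,v)=(\log(-u)+\tau,uv-1)$ and $\psi_3$ contain $\pm\tau$. Consequently the monodromy of your continued null coordinates around the puncture is not the identity but the time-$\tau$ flow of $K$, i.e.\ the boost $(u,v)\mapsto(e^{\tau}u,e^{-\tau}v)$; the filling still works for every $\tau$, but only after this boost is absorbed into the normalization of the coordinates on some of the sectors, which is precisely what the $\tau$-shifts in $\psi_2,\psi_3$ do and what your proposal neither notices nor performs. (Relatedly, the two spacelike sectors at a given saddle are both $P_-$-regions, respectively both $P_+$-regions --- compare the four quadrants $\{\pm u>0,\pm v>0\}$ in the paper's fill --- not ``from $P_+$ and $P_-$'' as you write; another sign that the gluing combinatorics has not been pinned down.) Second, the analytic heart of the proposition --- that the conformal factor extends smoothly and positively across $uv=0$ --- is exactly the step you defer as ``the main obstacle.'' The paper discharges it by the explicit formula for $h$: smoothness of $h$ at the origin reduces to smoothness of $F$, which holds because the compatibility relations force $f_0$ and $f_2$ to have the same Taylor expansion at $y=\mp1$ (each agrees with $f_1$, resp.\ $f_3$, on one side of that value). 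So your outline identifies the right ingredients, but as written it asserts rather than proves the two statements on which the proposition actually rests.
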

\begin{proof}  Since we fill the holes one by one, we need to consider the 
case $k=1$ only. In this case $S$ is diffeomorphic to a cylinder with 2 points removed. Let us see how to fill one hole.

Let $F:\R^2\rightarrow \R$ be a smooth function invariant by the flow of $u\partial_u-v\partial_v$ defined by 
$F(u,v)=f_0(uv-1)$ if $u>0$ and $F(u,v)=f_2(uv-1)$ if $u<0$. Let $h$ be the Lorentzian metric on $\R^2$ given by 
$h= v^2du^2+2(1+uv\,F)du\,dv+u^2F\,dv^2$.

Let $V_0=\{(u,v)\in \R^2\,;\, u>0\}$, $V_1=\{(u,v)\in \R^2\,;\, v>0\ \text{and}\ 2-uv>0\}$, $V_2=\{(u,v)\in \R^2\,;\, u<0\}$ and $V_3=\{(u,v)\in \R^2\,;\, v<0\ \text{and}\ 2-uv>0\}$. Let $\psi_0$, $\psi_1$, $\psi_2$ and $\psi_3$ the diffeomorphisms defined by 
\begin{eqnarray*}
 \psi_0: V_0 & \rightarrow & \R^2\\
 (u,v)&\mapsto& (\log(u),uv-1)\\
 \psi_1: V_1 & \rightarrow & \{(x,y)\,;\, y<1\}\\
 (u,v)&\mapsto & \left(\log\left(\frac{v}{2-uv}\right),uv-1\right)\\
 \psi_2: V_2 & \rightarrow & \R^2\\
 (u,v)&\mapsto& (\log(-u)+\tau ,uv-1)\\
 \psi_3: V_3 & \rightarrow & \{(x,y)\,;\, y<1\}\\
 (u,v)&\mapsto & \left(\log\left(\frac{-v}{2-uv}\right)-\tau ,uv-1\right) 
\end{eqnarray*}
We let the reader check that $\psi_i^*g_i=h|_{V_i}$ and that $\psi_{i+1}\circ\psi_i^{-1}=\Phi_{i,i+1}$. Hence the first hole is filled, the second one can be filled the same way.
\end{proof}

The following proposition gives sufficient conditions to ensure that a spacelike Zoll cylinder admits a hyperbolic atlas. 
The conditions \ref{cond_2} and \ref{cond_3} are necessary in the sense that they are satisfied by metrics admitting a hyperbolic atlas. There exists spacelike Zoll metrics admitting a hyperbolic atlas that do not verify condition \ref{cond_1}, see Theorem \ref{theo_hyperbolic}, but this condition is verified on a neighborhood of de Sitter space.

\begin{prop}\label{prop_hyperbolicatlas}
 Let $(C,g)$ be  a 
 spacelike Zoll cylinder with a Killing field $K$ that is timelike somewhere (and therefore vanishing somewhere) and  
 $\{p_0,\dots,p_{2k-1}\}$ be the set of zeros of $K$. Let $\eta$ be  a lightlike geodesic  transverse to $K$. 
 Then there exists $l>0$ such that  $(C\smallsetminus \{p_0,\dots,p_{2k-1}\},l\cdot g)$ admits a hyperbolic atlas if 
 \begin{enumerate}
  \item\label{cond_1} the curvature of $g$ is positive at any point where $K$ is timelike or lightlike,
  \item\label{cond_2} $K$ and $\alpha$ are analytic,
  \item\label{cond_3} there exists $t_1 \neq  t_2$ such that $\alpha\circ \eta(t_1)=\alpha\circ \eta(t_2)=0$ and  $(\alpha\circ \eta)'(t_1)=-(\alpha\circ \eta)'(t_2)$.
 \end{enumerate}
\end{prop}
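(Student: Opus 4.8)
The plan is to follow the proof of Proposition \ref{prop_parabolicatlas} step by step, replacing the model $y^2dx^2+2dxdy$ by the hyperbolic model $(y^2-1)dx^2+2dxdy$, and then to treat carefully the behaviour at the zeros of $K$, which is the one genuinely new feature. First I would produce the underlying atlas. Since $K$ is timelike somewhere it vanishes and $\alpha$ takes negative values, so Propositions \ref{prop_cases} and \ref{prop_llgeod} and the third case of Proposition \ref{prop_atlases} apply: on $C$ minus the (finitely many, by Proposition \ref{prop_cases}) zeros of $K$ there is a $\Z/4k\Z$-indexed atlas of charts in which $K=\partial_x$ and $g=h_i(y)dx^2+2dxdy$, where each $h_i$ has two zeros $a_i<0<b_i$, is negative on $]a_i,b_i[$, is strictly monotonous on the two unbounded components, and tends to $+\infty$ at $\pm\infty$; the $h_i$ are related by the translations recorded there. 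The saturation of the given geodesic $\eta$ is one of these charts.

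Next I would normalize each $h_i$ to the hyperbolic model. Hypothesis \ref{cond_1}, applied on the separatrices $\{y=a_i\}$ and $\{y=b_i\}$ where $K$ is lightlike, forces these zeros to be simple, and one checks, by the same curvature argument as in Proposition \ref{prop_llgeod}, that the restriction of $h_i$ to $]a_i,b_i[$ has a unique nondegenerate minimum. A single global rescaling $l\cdot g$ normalizes this minimum value to $-1$; this is compatible with every chart because any two points are joined by a broken lightlike geodesic and the transition maps are isometries, so the minimum value is common to all the $h_i$. The reparameterization $v=\pm\sqrt{1+h_i(y)}$ is then a diffeomorphism sending $a_i,b_i$ to $-1,+1$ and the minimum to $0$, so that the $dx^2$-coefficient becomes exactly $v^2-1$. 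As in the parabolic case, introducing the transformation curve solving the analogue of \eqref{system_gamma} (and checking, via the analogue of Fact \ref{lefact}, that it is a global diffeomorphism, a complete non-closed spacelike geodesic being forbidden by the Zoll property) turns the metric into $(y^2-1)dx^2+2dxdy+f_i(y)dy^2$ with $1-(y^2-1)f_i(y)>0$. Condition \ref{cond_3} encodes the symmetry of the model $y^2-1$ between its two roots: it is what lets the two separatrices crossed by $\eta$ be normalized simultaneously, and it is moreover necessary, since it can be read off any chart of an actual hyperbolic atlas.

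The transition maps are then forced. As in \cite{BavardMounoud}, two adjacent saturations are exchanged by the orthogonal symmetry about their common perpendicular geodesic, which in the normal coordinates is a reflection $x\mapsto -x+c(y)$; the requirement that it preserve $(y^2-1)dx^2+2dxdy+f_i(y)dy^2$ forces $c'(y)=-2/(y^2-1)$ independently of $f_i$, hence $c(y)=\log\bigl|\tfrac{y+1}{y-1}\bigr|$, reproducing exactly the maps $\Phi_{ij}$ of Definition \ref{def_hyp}, the residual horizontal shifts being collected into the single parameter $\tau$.

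The main obstacle, and the reason for the analyticity hypothesis \ref{cond_2}, is the gluing around the zeros of $K$. Each such zero is a saddle from which four lightlike separatrices emanate, and the charts coming from distinct transverse lightlike geodesics must be identified consistently around it into the $\Z/4k\Z$ pattern of Definition \ref{def_hyp}; the local model for this identification is precisely the one exhibited in the proof of Proposition \ref{embedded}. Analyticity of $K$ and of $\alpha$ forces the functions $f_i$ attached to the two sides of a shared separatrix to be analytic continuations of one another, which is what makes the normalization of the previous step coherent in every chart and lets the finite atlas close up. I expect this matching across the saddle points, rather than the essentially computational normal-form step, to be the delicate part of the argument.
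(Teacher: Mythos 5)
Your computational core coincides with the paper's: start from the atlas of Proposition \ref{prop_atlases}, use condition \ref{cond_1} to get simple zeros and a unique nondegenerate minimum of each $h_i$, use condition \ref{cond_3} so that the two derivatives $|h_i'(a_i)|=|h_i'(b_i)|$ can be simultaneously scaled to $2$ (which is exactly what makes the shear function smooth at $y=\pm 1$), pass to a Morse coordinate in which the $dx^2$-coefficient is $y^2-1$, and identify the transitions as reflections $x\mapsto -x+\log\bigl|\tfrac{y+1}{y-1}\bigr|$ up to horizontal shifts. However, there are two genuine gaps, both at the points where the construction must be made globally consistent. First, your claim that ``the minimum value is common to all the $h_i$'' because any two points are joined by a broken lightlike geodesic and the transitions are isometries does not hold up: the transitions identify the $h_i$ only on the overlaps, and while $U_{2i}$ and $U_{2i+1}$ share the timelike wedge containing their minima, the charts $U_{2i+1}$ and $U_{2i+2}$ share only a region where $\alpha>0$; their minima sit in disjoint wedges separated by the separatrices of a saddle, and no overlap relation compares them. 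This is precisely what hypothesis \ref{cond_2} is for, and not by the mechanism you propose: it is false that analyticity of $K$ and $\alpha$ forces the $f_i$ on the two sides of a separatrix to be analytic continuations of one another --- the $f_i$ are in general not analytic at all (Theorem \ref{theo_hyperbolic} and Corollary \ref{coro_mob_hyp} produce hyperbolic atlases with non-analytic, chart-dependent $f_i$, while $K$ and $\alpha$ are analytic there). The paper instead passes to the space $\mathcal{L}$ of non-constant orbits of $K$, an analytic non-Hausdorff $1$-manifold (this uses analyticity of $K$), on which $\alpha$ descends to an analytic function; in an analytic coordinate $s$ on $\mathcal{L}$ the functions $\alpha|_{D_i}(s)$ agree on overlaps and hence, by analytic continuation, are all equal up to translation, so one global rescaling normalizes every chart at once. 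Note that this continuation cannot be run on the $h_i(y)$ themselves, because the parameter $y$ comes from a lightlike geodesic of the merely smooth metric $g$ and is therefore only smooth.

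Second, you assert that the residual horizontal shifts ``collect into the single parameter $\tau$''. In the parabolic case this is a triviality, but here it is the step the paper explicitly flags as not obvious: around each zero $p_j$ of $K$ four charts are glued in a cycle, and a hyperbolic atlas requires the cocycle condition $\tau_{4j}+\tau_{4j+1}+\tau_{4j+2}+\tau_{4j+3}=0$ at every zero; if it fails, no adjustment of the charts removes the shifts. This condition encodes the fact that $g$ extends smoothly over the puncture $p_j$, and the paper proves it by combining the rigidity result of \cite{BavardMounoud} --- the germ of the metric at a zero of $K$ is determined by the restrictions of $\alpha$ to the geodesics through that zero, restrictions which do not depend on the $\tau_i$ --- with Proposition \ref{embedded}, which realizes the same germ by a model whose shifts sum to zero. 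You correctly point to Proposition \ref{embedded} as the local model, but you give no argument that the shifts around a saddle cancel, and analyticity alone cannot supply one, since the local data $K$ and $\alpha$ do not see the $\tau_i$.
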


\begin{proof} Let $\{(U_i,\psi_i),i\in \Z/4k\Z\}$ be the atlas of  $C\smallsetminus\{p_0,\dots,p_{2k-1}\}$ given by  Proposition \ref{prop_atlases}. In any of these charts $g$ reads $h_i(y)dx^2+2dxdy$. Condition \ref{cond_3} of the statement above can be translated in $h_0'(a_0)=-h'_0(b_0)$, with $a_0$ and $b_0$ as defined in Proposition \ref{prop_atlases}. It follows from the compatibility conditions between the $h_i$ that for any $i\in \Z/4k\Z$, we have $h'(a_i)=h'(a_0)$ and $h'(b_i)=h'(b_0)$ and therefore $h'(a_i)=-h'(b_i)$ Let $y_i$ be a critical point of $h_i$, we know that $h_i(y_i)< 0$ and it follows from our assumption on the curvature that $h_i''(y_i)>0$. Therefore the function $h_i$ has a unique minimum.  Possibly multiplying $g$ and $K$ by  positive constants,  we may assume that  the minimum of $h_0$ is $-1$ and  $|h_0'(a_0)|=|h_0'(b_0)|=2$.


 

The space of non constant orbits of $K$ is an analytic non Hausdorff $1$-dimensional manifold that we denote $\cal L$. The points where $\cal L$ is not separated correspond to the separatrix of the saddle points of $K$. The cardinal of this set is therefore $8k$. An atlas of $\cal L$ can be easily obtained from  the atlas $\{(U_i,\psi_i)\}$. To each $U_i$ corresponds a maximal connected Hausdorff submanifold  $V_i$ of $\cal L$.  We endow $\cal L$ with an  analytic vector field $\partial_s$ whose restriction to $V_0$ is complete. It gives  a coordinate $s$ on each line $D_i=\{(x,y)\in U_i, x=0\}$ such that $\alpha|_{D_i}(s)$ is analytic. It follows from the gluing picture of $\cal L$, that, up to a right translation, the functions $\alpha|_{D_i}(s)$ are all the same. In particular it means that all the functions $h_i$ have the same minimum and, using the fact that all $h_i$ goes to infinity at both ends, that $s$ goes from $-\infty$ to $+\infty$ on 
each $V_i$, i.e.\ that $\partial_s$ is complete.

Let $\lambda_i$ be the function such that $h_i(y)=e^{\lambda_i(y)}(y-a_i)(y-b_i)$. Notice that $\lambda_i(a_i)=\lambda_i(b_i)=\ln \left(\frac{2}{|a_i-b_i|}\right)$. The map $\alpha|_{D_i}$ is a Morse function admitting a unique critical point therefore there exists a coordinate $t$ on ${D_i}$ (depending on $i$) such that $\alpha|_{D_i}(t)=t^2-1$ and therefore that $y(-1)=a_i$ and $y(1)=b_i$.
Differentiating the equality $e^{\lambda_i(y(t))}(y(t)-a_i)(y(t)-b_i)=t^2-1$ we obtain 
$y'(-1)e^{\lambda_i(a_i)}(a_i-b_i)=-2$ i.e.\  $y'(-1)=1$ and similarly $y'(1)=1$.
The metric $g$ reads as $(t^2-1)dx^2 +2\beta_i(t)dtdx$ in the coordinates $(x,t)$. 
Let $t\mapsto c(t)$ be a solution of $c'(t)(t^2-1)=1-\beta_i(t)$.  It follows from  $y'(\pm 1)=1$ that $\beta_i(\pm 1)=1$, consequently 
 $c'$, and therefore $c$, is smooth.


 Using the solution $c(t)$ we define new coordinates $(u,t)$ on $U_i$ by $(u,t)\mapsto (c(t)+u,t)$. The metric in the coordinates $(u,t)$  has the desired form. As in Proposition \ref{prop_parabolicatlas}, we see that the transition functions are isometries preserving the second coordinate and sending the Killing field to its opposite. Consequently they have the desired expression up to a horizontal translation of length $\tau_i$. However in this case, it is not obvious which translations can be supposed to be trivial.

In order to conclude we remark that the restrictions of the function $\alpha$ to arc length parametrized geodesics intersecting $\{p_0,\dots,p_{2k-1}\}$ does not depend of the choice of these horizontal translations. Furthermore, it is proven in \cite{BavardMounoud} that these functions completely determine the metric on a neighborhood of the zero. Thus it follows from Proposition \ref{embedded} that  $\sum_{k=0}^3\tau_{4j+k}=0$ for any $j$. Now it is not difficult to modify the atlas in order to obtain a hyperbolic atlas.\end{proof}

\begin{thm}\label{theo_hyperbolic}
Let $(C,g)$ be a Lorentzian cylinder  and $\{p_0,\ldots, p_{2k-1}\}$ such that $(C\smallsetminus\{p_0,\ldots, p_{2k-1}\},g)$ admits a  hyperbolic atlas  $\mathcal A=\{(U_i,\phi_i); i\in \zkkk \}$. 
If the parameter $\tau$ of $\mathcal A$ is $0$ and if there exist  $2k$ smooth functions  $\kappa_0,\dots \kappa_{2k-1}$ from $\R$ to $\R$ 
such that 
 \begin{enumerate}
  \item\label{o} for all $t\in \R$, for all $j\in \zkk$, $\kappa_j(t)\geq -1$;
  \item\label{t} all the functions $\kappa_j$ have  the same infinite Taylor expansion at $-1$ and at $1$ and satisfy  $\kappa_j(\pm 1)=0$;
  \item for all $i\in \{0,\dots, 2k-1\}$ the function $f_{2i}$ such that 
    $$g_{2i}=\phi_i^{-1}{}^*g=(y^2-1)  dx^2+2dxdy+ f_{2i}(y)dy^2,$$ 
    satisfies $$f_{2i}=\frac{1-(1+\kappa_{i})^2}{y^2-1}.$$
  \item\label{th} the restriction of the function $\sum_{j} \kappa_j$ to $[-1,1]$  and the restrictions of the functions $\sum_{j} \kappa_{2j}$ and $\sum_{j} \kappa_{2j+1}$ to  $]-\infty,-1]\cup [1,+\infty[$ are odd.
  
 \end{enumerate}
then  the cylinder $(C,g)$ is spacelike Zoll.

Moreover, we have a reciprocal in the analytic case i.e.\ if $(C,g)$ is analytic and spacelike Zoll then the parameter $\tau$ is equal to $0$ and $\kappa_0=\dots=\kappa_{2k}$ are odd functions.
\end{thm}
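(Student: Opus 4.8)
The plan is to transpose the three-step scheme of the parabolic case --- Lemma \ref{lem_tangentparabolic}, Proposition \ref{prop_recoll} and Lemma \ref{lem_Besse} --- to the hyperbolic atlas; the new feature is that the coefficient of $dx^2$ is $y^2-1$, so that $\alpha=g(K,K)$ vanishes on the two lightlike lines $y=\pm1$ and is negative on the timelike strip $\{-1<y<1\}$. First I would fix a unit spacelike geodesic in a chart and combine Clairaut's integral $g_i(\partial_x,\dot\gamma)=\epsilon_1 c$ with the unit-speed relation; solving exactly as in the parabolic computation gives
$$y'=-\epsilon\sqrt{\frac{c^2+1-y^2}{1-(y^2-1)f_i(y)}},\qquad \frac{\partial x}{\partial y}=\frac{-\epsilon\epsilon_1\,c\sqrt{1-(y^2-1)f_i(y)}-\sqrt{c^2+1-y^2}}{(y^2-1)\sqrt{c^2+1-y^2}}.$$
Thus the geodesic is confined to $\{|y|\le\sqrt{c^2+1}\}$ and, by the convergence/divergence analysis of Lemma \ref{lem_tangentparabolic}, is tangent to $K$ exactly once on each of the lines $y=\pm\sqrt{c^2+1}$, both lying in the spacelike regions. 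On the relevant branch $\epsilon\epsilon_1=-1$ the numerator is $c\sqrt{1-(y^2-1)f_i}-\sqrt{c^2+1-y^2}$, which vanishes at $y=\pm1$ because condition \ref{t} gives $\kappa_i(\pm1)=0$ and hence $1-(y^2-1)f_{2i}=(1+\kappa_i)^2$; the integrand is therefore integrable across the two lightlike lines, and the geodesic passes smoothly from one spacelike region through the timelike strip into the other.

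Next I would prove the analogue of Proposition \ref{prop_recoll}. Each arch of the geodesic, running from a tangency in one spacelike region to the next tangency in the other, is traversed by two consecutive charts (the transition taking place in the strip covered by $\Phi_{2i,2i+1}$), which accounts for the $4k$ charts. Following the arch through the charts $U_i$, $i\in\zkkk$, and applying the transition functions, I would accumulate the horizontal displacement: each lightlike crossing contributes a $\log\frac{y+1}{y-1}$-term from $\Phi$ and the last transition contributes $\pm\tau$. Comparing with the constant-curvature reference $g^0_i=(y^2-1)dx^2+2dxdy$, all of whose spacelike geodesics close, absorbs the $\log$-type and $1/c$-type terms, and the closing condition reduces to an identity of the shape
$$\sum \int \frac{c\,\kappa(y)}{(y^2-1)\sqrt{c^2+1-y^2}}\,dy=-\tau\qquad(\forall c>0),$$
the integrand being regular at $y=\pm1$ thanks to $\kappa(\pm1)=0$. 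The decisive point is the bookkeeping of which $\kappa_j$ appears where: the two spacelike regions $P_\pm$ are bordered by charts of opposite parity (via $f_{2i-1}=f_{2i}$ on $P_+$ and $f_{2i-2}=f_{2i+1}$ on $P_-$) while the timelike strip $P_0$ mixes all charts ($f_{2i}=f_{2i+1}$). Assembling the contributions of all $2k$ arches therefore produces precisely the three combinations $\sum_j\kappa_j$ (on $[-1,1]$), $\sum_j\kappa_{2j}$ and $\sum_j\kappa_{2j+1}$ (on $\{|y|\ge1\}$) that occur in condition \ref{th}.

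For the sufficiency direction I would then insert $\tau=0$ and the oddness hypotheses. Summed over the $2k$ arches, the timelike contributions assemble into $\sum_j\kappa_j$ integrated over the symmetric interval $[-1,1]$, and the spacelike ones into $\sum_j\kappa_{2j}$ and $\sum_j\kappa_{2j+1}$ integrated over the symmetric set $\{1\le|y|\le\sqrt{c^2+1}\}$. Since the weight $\frac{c}{(y^2-1)\sqrt{c^2+1-y^2}}$ is even in $y$ and each of these three combinations is odd on its domain by condition \ref{th}, all three integrals vanish for every $c$. The closing identity then holds for all $c>0$, so every spacelike geodesic closes up after one turn and $(C,g)$ is spacelike Zoll.

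For the reciprocal in the analytic case, analyticity of $g$ together with the overlap relations of the atlas (the remark after Definition \ref{def_hyp}) forces $f_0=f_1=\cdots=f_{4k}$, hence a single function $\kappa$ with $\sum_j\kappa_j=2k\kappa$ and $\sum_j\kappa_{2j}=\sum_j\kappa_{2j+1}=k\kappa$, so that the three oddness conditions collapse to ``$\kappa$ odd''. Running the identity backwards now needs the full Abel-type inversion, the hyperbolic analogue of Lemma \ref{lem_Besse} for the weight above: a $c$-independent left-hand side forces $\kappa$ odd and the value $0$, whence $\tau=0$. \textbf{The main obstacle} is not the single-chart analysis but the global bookkeeping of the closing identity across the $4k$ charts --- keeping track of the orientation reversals and of the $\log\frac{y+1}{y-1}$ jumps at every lightlike crossing, checking by means of the constant-curvature reference (and Proposition \ref{embedded}, which fills the deleted zeros) that they cancel, and verifying that the parity of the charts bordering $P_+$ and $P_-$ is exactly what separates $\sum_j\kappa_{2j}$ from $\sum_j\kappa_{2j+1}$ while $P_0$ retains the full sum. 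A secondary difficulty is adapting Lemma \ref{lem_Besse}: the pole of the weight at $y=\pm1$ must first be absorbed by the normalization $\kappa(\pm1)=0$ before the double-integral inversion can be carried out.
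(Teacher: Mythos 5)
Your plan follows the paper's own route (chart-by-chart Clairaut analysis, a closing condition obtained by comparison with the constant-curvature reference, oddness of the $\kappa$-combinations for sufficiency, and an Abel-type inversion --- the paper's Lemma \ref{lem_Besse_hyp} --- for the analytic converse), and your identification of the three combinations, $\sum_j\kappa_j$ on $[-1,1]$ and $\sum_j\kappa_{2j}$, $\sum_j\kappa_{2j+1}$ on $\{|y|\ge 1\}$, is the correct one. But two steps are genuinely defective. First, you never treat the spacelike geodesics perpendicular to $K$. Your Clairaut analysis presupposes a nonzero Clairaut constant: for the perpendicular geodesics (your $c=0$) the turning lines $y=\pm\sqrt{c^2+1}$ degenerate to the lightlike lines $y=\pm 1$, the geodesic is asymptotic to them in every chart, and by Proposition \ref{prop_cases} it passes through the zeros $p_0,\dots,p_{2k-1}$, i.e.\ through the punctures where the atlas does not even exist. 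These are spacelike geodesics of $(C,g)$ and must also be shown to be simply closed; they are invisible to your closing identity. The paper disposes of them first: the foliation orthogonal to $K$ and the transition maps do not depend on the $f_i$, so one may take $f_i\equiv 0$, and the filled cylinder (Proposition \ref{embedded}) is then the quotient of the universal cover of de Sitter by an elliptic element composed with the time-$\tau$ hyperbolic flow, whose orthogonal geodesics are simply closed if and only if $\tau=0$. Without some such argument the sufficiency proof is incomplete.

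Second, the bookkeeping mechanism you propose --- ``each arch is traversed by two consecutive charts, the transition taking place in the strip covered by $\Phi_{2i,2i+1}$'' --- would fail. The transition $\Phi_{2i,2i+1}$ exchanges the two behaviours recorded in the paper's Lemma \ref{lem_tangenthyperbolic}: an arc that crosses the strip in $U_{2i}$ (its integrand is regular at $y=\pm 1$ precisely because the numerator vanishes there) is carried by the $\log\frac{y+1}{1-y}$ term to an arc of $U_{2i+1}$ that is asymptotic to $y=\pm 1$, so in the new chart it never leaves the strip and cannot be followed to the next tangency. The correct accounting, which is what produces the index set $\sigma_\gamma=\{i_0,\,i_0+3,\,i_0+4,\,i_0+7,\dots\}$ of Proposition \ref{prop_recoll_hyp}, is that each arch between consecutive tangencies lies entirely in a \emph{single} chart, and the chart changes occur at the tangency points inside $P_\pm$ (where the overlaps carry the $\pm\tau$ translations). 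Your final combinations of the $\kappa_j$ happen to be the right ones, but the derivation you sketch breaks down at the first lightlike line and must be replaced by this one-chart-per-arch accounting.
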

\begin{proof}[ Proof of Theorem \ref{theo_hyperbolic}.]
Let $\mathcal A$ be a hyperbolic atlas of $(C\smallsetminus\{p_0,\ldots, p_{2k-1}\},g)$. We denote by $K$ the associated Killing field and  by $g_i$ the expression of $g$ in the coordinates $(U_i,\phi_i)$.  We recall that there exist functions $f_i$ such that the $g_i$'s read as $(y^2-1) dx^2+2dxdy+ f_i(y)dy^2$.

We first remark that on each $U_i$ the foliation perpendicular to $K$ does not depend on the functions $f_i$ and so do the $\Phi_{i,j}$. Moreover, in the proof of 
Proposition \ref{embedded} we saw that transitions functions used to fill the holes are also independent from these functions. It means that the (unparameterized) 
spacelike geodesics orthogonal to $K$ do not depend on the choice of the functions $f_i$ but only on $\tau$. In order to see when such a geodesic is simply 
closed we can assume that all the $f_i$ are $0$. The cylinder is then the quotient of the universal cover of de Sitter by the product of an elliptic element and the 
time $\tau$ of an hyperbolic flow. Consequently, the spacelike geodesics orthogonal to $K$ are simply closed if and only if $\tau=0$. We assume now $\tau=0$ 
and we study spacelike geodesics  not perpendicular to $K$.
\begin{lem}\label{lem_tangenthyperbolic}
Let $\gamma_i\,:\,t\mapsto(x(t),y(t))$ be a unit spacelike geodesic of $(\R^2,g_i)$ that is not  perpendicular to $\partial_x$. Then there exists $c>1$ such that 
$\gamma_i$ is contained between the lines $y=c$ and $y=-c$. Further $\gamma_i$ is either tangent exactly once to each of these lines or it is  asymptotic to the 
lines $y=\pm 1$. Moreover, in the first  case the geodesic segment between the points of tangency satisfies:
 $$\frac{\partial x}{\partial y}= \frac{\sqrt{c^2-1}\sqrt{1-(y^2-1)f_i(y)}-\sqrt{c^2-y^2}}{(y^2-1)\sqrt{c^2-y^2}}.$$ 
Furthermore, these two situations are exchanged by a transition map $\Phi_{2j,2j+1}$. 
\end{lem}
\begin{proof}[Proof of Lemma \ref{lem_tangenthyperbolic}]
Let $\gamma_i\,:\,t\mapsto(x(t),y(t))$ be unit spacelike geodesic of $(\R^2,g_i)$ that is  not  perpendicular to $\partial_x$.

Writing the first integrals of the geodesic flow, we have 
\begin{equation} 
 \begin{cases}
 (y^2-1)x'+y'=\epsilon_1 \sqrt{c^2-1}\\
 (y^2-1)x'^2+2x'y'+f_i(y)y'^2=1,
\end{cases}
\end{equation}
with $c>1$ and $\epsilon_1=\pm 1$.

This system of equations can be solved if and only if $c^2-y^2\geq 0$ proving that $-c\leq y\leq c$. Solving it we find:
\begin{eqnarray*}
  x'&=&\frac{\epsilon_1 \sqrt{c^2-1} (1-(y^2-1)f_i(y))+\epsilon \sqrt{(1-y^2f_i(y))(c^2-y^2)}}{(y^2-1)(1-y^2f_i(y))}\\
  y'&=&-\epsilon \sqrt{\frac{c^2-y^2}{1-(y^2-1)f_i(y)}},
  \end{eqnarray*}
  where $\epsilon =\pm 1$. It implies that 
  $$\frac{\partial x}{\partial y}= \frac{-\epsilon\epsilon_1 \sqrt{c^2-1}\sqrt{1-(y^2-1)f_i(y)}-\sqrt{c^2-y^2}}
  {(y^2-1)\sqrt{c^2-y^2}}.$$
The number $\epsilon_1$ determines the orientation of the geodesic and $\epsilon$ changes only when $y=\pm c$.

The fact that for any $y_0$ such that $1<y_0<c$ the integrals
\begin{eqnarray*}
\int_1^{y_0} \frac{- \sqrt{c^2-1}\sqrt{1-(y^2-1)f_i(y)}-\sqrt{c^2-y^2}}{(y^2-1)\sqrt{c^2-y^2}}dy\\
 \int_{-y_0}^{-1} \frac{- \sqrt{c^2-1}\sqrt{1-(y^2-1)f_i(y)}-\sqrt{c^2-y^2}}{(y^2-1)\sqrt{c^2-y^2}}dy
\end{eqnarray*}
diverge, implies that $\gamma$ is tangent at most once to each line $y=\pm c$.

If $\gamma$ intersect $P_+\cup P_-$, then the fact that for any $y_0\in ]1,c[$ and any $y_1\in ]-c,-1[$  the integrals
\begin{eqnarray*}
 \int_{-c}^{c}\frac{\sqrt{c^2-1}\sqrt{1-(y^2-1)f_i(y)}-\sqrt{c^2-y^2}}{(y^2-1)\sqrt{c^2-y^2}}dy\\
 \int_{y_0}^c \frac{- \sqrt{c^2-1}\sqrt{1-(y^2-1)f_i(y)}-\sqrt{c^2-y^2}}{(y^2-1)\sqrt{c^2-y^2}}dy\\
 \int_{-c}^{y_1}\frac{- \sqrt{c^2-1}\sqrt{1-(y^2-1)f_i(y)}-\sqrt{c^2-y^2}}{(y^2-1)\sqrt{c^2-y^2}}dy
\end{eqnarray*}
converge implies that $\gamma$ is tangent at least once to each line $y=\pm c$. Between these points we have
 $$\frac{\partial x}{\partial y}= \frac{\sqrt{c^2-1}\sqrt{1-(y^2-1)f_i(y)}-\sqrt{c^2-y^2}}{(y^2-1)\sqrt{c^2-y^2}}.$$ 

If $\gamma\subset P_0$ then  $\frac{\partial x}{\partial y}$ has to be equal to  $$\frac{-\sqrt{c^2-1}\sqrt{1-(y^2-1)f_i(y)}-\sqrt{c^2-y^2}}{(y^2-1)\sqrt{c^2-y^2}}$$ 
and the geodesic is asymptotic to the lines $y=\pm 1$. 
\end{proof}
%
\begin{prop}\label{prop_recoll_hyp}
 Let $\gamma$ be a unit spacelike geodesic of $(C,g)$ that is  not perpendicular to $K$. 
 Let $i_0$ be an \emph{even} element of $\zkkk$ such that $\gamma$ is tangent to $K$ at some point of $U_{i_0}$.
 The geodesic $\gamma$ is closed if and only if 
 \begin{equation}\label{cond_hyperbolic}
   \int_{-c}^c \sqrt{c^2-1}\frac{\sum_{i\in\sigma_\gamma} (\sqrt{1-(y^2-1)f_i(y)}-1)}{(y^2-1)\sqrt{c^2-y^2}}dy=0,
 \end{equation}
 where $\sigma_\gamma=\{2i+\frac{1+(-1)^{i+1}}{2}+i_0 \in \zkkk \}$ and $\sqrt{c^2-1}=|g(\gamma',K)|$.
\end{prop}
\begin{proof} Similar to the parabolic case, the integral above expresses the shift between the geodesic $\gamma$ and the geodesic of $g^0$ starting with the same initial speed at a point of tangency. The only difference is that when $\gamma$ is cut along its points of tangency with $K$ the segments obtained are contained in a $U_i$ with $i\in \sigma_\gamma$.
\end{proof}
\hop
Let $g$ be a metric having a hyperbolic atlas with $\tau=0$. Replacing $\sqrt{1-(y^2-1)f_{2i}(y)}-1$ by $\kappa_i$ in \eqref{cond_hyperbolic}, we see that the spacelike geodesics having a point of tangency in $U_0$ are all closed  if and only if
$$ \int_{[-c,-1]\cup[1,c]} \frac{ \sqrt{c^2-1}\sum_{i\in\sigma_\gamma} \kappa_{2i}}{(y^2-1)\sqrt{c^2-y^2}}dy +\int_{-1}^{1} \frac{ \sqrt{c^2-1}\sum_{i\in\sigma_\gamma} \kappa_{i}}{(y^2-1)\sqrt{c^2-y^2}}dy=0.
$$
We see also  that spacelike geodesics having a point of tangency in $U_2$ are all closed  if and only if
$$ \int_{[-c,-1]\cup[1,c]} \frac{ \sqrt{c^2-1}\sum_{i\in\sigma_\gamma} \kappa_{2i+1}}{(y^2-1)\sqrt{c^2-y^2}}dy +\int_{-1}^{1} \frac{ \sqrt{c^2-1}\sum_{i\in\sigma_\gamma} \kappa_{i}}{(y^2-1)\sqrt{c^2-y^2}}dy=0.
$$
It is the case, under the hypothesis of Theorem \ref{theo_hyperbolic}. The reciprocal is given by applying the following lemma to the function $\sum_{i\in \sigma_{\gamma}}\kappa_i(y)/(y^2-1)$.

\begin{lem}\label{lem_Besse_hyp}
Let $h :\R\rightarrow \R$ be a smooth function. If the function 
 $$H:c\mapsto\int_{0}^{c}\sqrt{c^2-1}\frac{h(s)}{\sqrt{c^2-s^2}}ds$$
 is constant on $]1,+\infty[$ then it is equal to $0$. If moreover $h$ is analytic then $h=0$.
\end{lem}

\begin{proof} We define the function $J(a)$ by:
$$J(a)=\int_1^a \frac{c H(c)}{\sqrt{a^2-c^2}\sqrt{c^2-1}}dc=\int_1^a\frac{c}{\sqrt{a^2-c^2}}\int_0^c \frac{h(s)}{\sqrt{c^2-s^2}}ds\;dc$$
Doing the same computation as in the proof of Lemma \ref{lem_Besse} we find
\begin{equation}\label{eqeq}
J(a)=\pi\int_0^ah(s)ds - 2\int_0^1h(s)\arctan\left(\frac{1-s^2}{a^2-1}\right)ds.
\end{equation}
On the other hand if $H$ is constant equal to $\tau$ then $J(a)$ does not depend on $a$, in fact $J(a)=\tau\pi/2$.
When $a$ tends to $1$ then \eqref{eqeq} tends to $0$ therefore $\tau=0$. Proving the first part.

It follows that $\int_{0}^{c}\frac{c\ h(s)}{\sqrt{c^2-s^2}}ds=0$ for any $c>1$. If $h$ is analytic then this equality is in fact true for all $c>0$ and it follows from Lemma \ref{lem_Besse} that $h=0$.
\end{proof}
 Combining Lemma \ref{lem_tangenthyperbolic}, Proposition \ref{prop_recoll_hyp} and Lemma \ref{lem_Besse_hyp} finishes the proof of Theorem
\ref{theo_hyperbolic}.
\end{proof}

It is not clear, whereas Lemma \ref{lem_Besse_hyp} can be extended to the smooth case. Indeed, it is possible to construct non zero $C^n$ functions $h$ such that the corresponding function $J$  vanishes, even though this does not
imply that $H$ also vanishes.
Adapting the construction of Corollary \ref{coro_parabolic}, we obtain:
\begin{cor}\label{coro_mob_hyp}
 There exists smooth spacelike Zoll M\"obius strip with non constant curvature whose orientation cover admits a hyperbolic atlas but no analytic one.
\end{cor}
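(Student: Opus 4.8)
The plan is to mirror the proof of Corollary \ref{coro_parabolic}: I will exhibit a smooth spacelike Zoll cylinder carrying a hyperbolic atlas with $\tau=0$ and non-constant curvature, together with a free orientation-reversing isometric involution $\sigma$; the quotient is then a smooth spacelike Zoll Möbius strip of non-constant curvature. The non-existence of an analytic hyperbolic atlas comes for free: by the analytic reciprocal in Theorem \ref{theo_hyperbolic} an analytic hyperbolic atlas would force all the $\kappa_j$ to be equal and odd, and the $\sigma$-symmetry I impose, combined with ``all equal'', will force the $\kappa_j$ to vanish, i.e.\ constant curvature, contradicting the choice.

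For the construction I would already take $k=1$, so the atlas is indexed by $\Z/4\Z$ and the data reduce to two functions $\kappa_0,\kappa_1$ with $f_{2i}=\frac{1-(1+\kappa_i)^2}{y^2-1}$. Fix a smooth \emph{odd} function $\kappa$ supported in $(-1,1)$, flat at $\pm1$, with values in $[-\tfrac12,\tfrac12]$ and chosen so that the resulting metric has non-constant curvature, and set $\kappa_0=\kappa$, $\kappa_1=-\kappa$. One checks directly that conditions (\ref{o})--(\ref{th}) of Theorem \ref{theo_hyperbolic} hold: each sum occurring in (\ref{th}) is identically $0$ on the relevant range, since $\kappa_0+\kappa_1\equiv0$ and $\kappa_0,\kappa_1$ vanish on $|y|\ge 1$. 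Hence the cylinder is spacelike Zoll, and since $\kappa_0\neq\kappa_1$ it admits no analytic hyperbolic atlas.

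The crucial point, where the hyperbolic case genuinely departs from the parabolic one, is that the antipodal identification cannot be a chart shift. A shift $U_i\mapsto U_{i+m}$ which is an involution forces $m=2k$; as the transition maps have coordinate Jacobian of determinant $-1$ and consecutive charts are glued, traversing $2k$ of them contributes $(-1)^{2k}=+1$, and composed with the coordinate expression $(x,y)\mapsto(-x,-y)$ (forced by isometry and $K\mapsto -K$) the shift is orientation-\emph{preserving}, yielding a cylinder. I would therefore use the reflection $\sigma\colon U_i\mapsto U_{-1-i}$ together with $(x,y)\mapsto(-x,-y)$. Because the $\Phi_{i,j}$ are odd involutions and $\tau=0$, this is a well-defined isometric involution; the index parity $(-1)^{(-1-i)-i}=-1$ shows it reverses orientation for every $i$, and it is fixed-point free, since in each overlap $U_i\cap U_{-1-i}$ the fixed-point equation forces $y=0$, which is excluded in $P_\pm$ and handled by the oddness of $\kappa$ in $P_0$. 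The isometry condition $f_{-1-i}(-y)=f_i(y)$ translates, through the gluing relations among the $f_j$, into the identities $\kappa_0(y)=\kappa_1(-y)$ on $P_0$ and the vanishing of $\kappa_0,\kappa_1$ on $P_\pm$, all satisfied by our odd $\kappa$ with $\kappa_1=-\kappa_0$.

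The main obstacle is thus not an estimate but the bookkeeping of this reflection involution: one must verify simultaneously that $\sigma$ is orientation-reversing, fixed-point free, an isometry, and that it extends smoothly across the $2k$ zeros of $K$ (using the canonical, isometry-equivariant hole-filling of Proposition \ref{embedded}, which pairs the zeros without fixing any), all while the defining functions $\kappa_j$ still meet the oddness requirements of Theorem \ref{theo_hyperbolic}. Once $\sigma$ is in place, $C/\sigma$ is a Möbius strip on which $g$ descends to a smooth spacelike Zoll metric of non-constant curvature, and the orientation cover $(C,g)$ admits the hyperbolic atlas above but, by the first paragraph, no analytic one.
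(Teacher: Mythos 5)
Your construction is correct, and it follows the same strategy as the paper's proof: choose functions $\kappa_j$ meeting the hypotheses of Theorem \ref{theo_hyperbolic} with $\tau=0$, build an isometric, orientation-reversing, fixed-point free involution $\sigma$ out of a chart permutation combined with $(x,y)\mapsto(-x,-y)$, and quotient. The difference is in the parameters: the paper takes $k=2$ (eight charts), the pairing $\sigma(U_{2i})=U_{2i+5}$, and the four functions $\kappa_0=\kappa$, $\kappa_1(t)=-\kappa(t)+\kappa(-t)$, $\kappa_2(t)=-\kappa(-t)$, $\kappa_3=0$ with $\kappa$ supported in $[2,3]$; you take $k=1$ (four charts), the index reflection $i\mapsto-1-i$, and $\kappa_0=\kappa$, $\kappa_1=-\kappa$ with $\kappa$ odd and supported in $(-1,1)$. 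Your choice is a genuine simplification, and it exploits a feature special to the hyperbolic case: a hyperbolic atlas with parameter $k$ carries $2k$ functions, and the oddness constraints of Theorem \ref{theo_hyperbolic} are split between $[-1,1]$ and its complement, so even $k=1$ leaves room for nontrivial M\"obius-compatible data (in the parabolic case $k=1$ forces constant curvature, which is presumably why the paper did not attempt the minimal value of $k$ here). Note also that your reflection works \emph{only} for $k=1$: for $k\ge2$ the map $i\mapsto-1-i$ sends $P_-$-gluings $\{2i,2i+3\}$ to $P_-$-gluings again, while $(x,y)\mapsto(-x,-y)$ must exchange $P_-$ and $P_+$; only modulo $4$ does $\{2i,2i+3\}=\{2i,2i-1\}$, which is why your involution is consistent and why the paper's permutation for $k=2$ is not just a cosmetic variant of yours.

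The one place where your argument is genuinely incomplete is the freeness of $\sigma$ at the two zeros of $K$: you assert that the hole-filling ``pairs the zeros without fixing any'', but this is precisely what has to be checked, and nothing in your text does it. It is true, and here is the quickest way to see it: by the filling maps of Proposition \ref{embedded} (e.g.\ $\psi_0(u,v)=(\log u,\,uv-1)$ and its companions), the four separatrices converging to one zero $p_0$ are germs of the lines $\{y=-1\}$ of the four charts, so by elimination the separatrices at the other zero $p_1$ are germs of the lines $\{y=+1\}$; since $\sigma$ sends $\{y=-1\}$-lines to $\{y=+1\}$-lines, it cannot fix $p_0$, hence it swaps the two zeros (one can also write $\sigma$ in filling coordinates, where near the zeros it reads $(u,v)\mapsto\bigl(\tfrac{v}{uv-2},\,u(uv-2)\bigr)$, confirming the smooth fixed-point free extension). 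Two further minor slips: the overlaps $U_i\cap U_{-1-i}$ have no $P_0$ component at all (they consist of one $P_-$ and one $P_+$ component), so the ``oddness of $\kappa$'' plays no role in fixed-point freeness; and the isometry condition on $P_\pm$ is \emph{evenness} of $\kappa_0,\kappa_1$ there, not vanishing --- vanishing follows only after combining with the oddness demanded by Theorem \ref{theo_hyperbolic}. Likewise, ``since $\kappa_0\neq\kappa_1$ it admits no analytic hyperbolic atlas'' is too quick as stated (a different, analytic atlas could a priori carry different functions); the correct argument is the one in your first paragraph, via the analytic reciprocal of Theorem \ref{theo_hyperbolic} together with $\sigma$-invariance --- which is also all that the paper itself offers.
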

\begin{proof} Let $\mathcal A$ be a hyperbolic atlas with $k=2$ and $\tau=0$. Let $\sigma:C\rightarrow C$  be the involution such that $\sigma(U_{2i})=U_{2i+5}$ 
(and therefore $\sigma(U_{2i+1})=U_{2i-4}$) and $\phi_{2i+5}\circ\sigma\circ\phi_{2i}^{-1}(x,y)=-(x,y)=\phi_{2i}\circ\sigma\circ\phi_{2i+5}^{-1}(x,y)$. 
We let  the reader check that $\sigma$ is well defined. It has no fixed points and it is not orientation preserving therefore $C/\sigma$ is a M\"obius strip. Let 
$\kappa$ be a smooth function with support in $[2,3]$ and values in $[-1,1]$.
We define  four functions by setting $\kappa_0=\kappa$, $\kappa_1(t)=-\kappa(t)+\kappa(-t)$, $\kappa_2(t)=-\kappa(-t)$ and  $\kappa_3=0$.
Let $g$ be the spacelike Zoll metric provided by Theorem \ref{theo_hyperbolic}  with the functions $\kappa_i$. This metric is clearly invariant by $\sigma$ and therefore induces a  metric all of whose spacelike geodesics are closed  on $C/\sigma$. 
\end{proof}

\section{Blaschke's examples}\label{sec_Blaschke}
It is also possible to produce examples with no global Killing field. We just adapt Blaschke construction from \cite{Blaschke} to the Lorentzian case. We  give only  one of the possible constructions and let the reader imagine all the possible variations around it. 

We start with de Sitter space seen as $\{(x,y,z)\in \R^3, -x^2+y^2+z^2=1 \}$ endowed with the metric $g^0$  induced by $-dx^2+dy^2+dz^2$. Let $K_1$ be the elliptic Killing field given by $K_1(x,y,z)=(0,-z,y)$ and $K_2$ be the parabolic Killing field given by $K_2(x,y,z)=(y,x+z,-y)$. Let $V_1=\{(x,y,z)\in \R^3; -x^2+y^2+z^2=1 \text{\ and\ } g^0(K_1,K_1)\leq 2\}$. We see that $(x,y,z)\in V_1$ if and only if $|x|\leq 1$, therefore for any  $(x,y,z)\in V_1$ we have $g^0(K_2,K_2)=(x+z)^2\leq 9$. Hence, $V_2=\{(x,y,z)\in \R^3; -x^2+y^2+z^2=1 \text{\ and\ } 16 \leq g^0(K_2,K_2)\leq 25 \}$ and $V_1$ are disjoint.

Let $\kappa_1$ be an odd function with support in $[-1,1]$ bounded below by $-1$ and $g^1$ be the metric given by $g^1=(v^2+1)du^2+2dudv + \frac{1-(\kappa_1(v)+1)^2}{v^2+1}dv^2$. According to Theorem \ref{theo_elliptic}, $g^1$ induces a  spacelike Zoll metric on the quotient of $\R^2$ by the horizontal  translation of length $2\pi$ (then $p=q=1$). It can be seen as a perturbation of the de Sitter metric for which $K_1$ is still a Killing field. The support of this deformation being contained in $V_1$.

Let $\kappa_2$ be an odd function with support in $[-5,-4]\cup [4,5]$ bounded below by $-1$.  Let $g^2$ be the metric on the cylinder given by a parabolic atlas such that $k=1$, $\tau=0$ and $g^2_0=v^2du^2+2dudv + \frac{1-(\kappa_2(v)+1)^2}{v^2}dv^2$. According to Theorem \ref{theo_parabolic}, $g^2$ is spacelike Zoll. It can be seen as a perturbation of the de Sitter metric for which $K_2$ is still a Killing field, the support of this deformation being contained in $V_2$.

Let $g$ be the metric on the cylinder that coincides with $g^1$ on $V_1$, with $g^2$ on $V_2$ and with $g^0$ elsewhere.
Let $\gamma$ be a spacelike geodesic of $g$. It follows from Proposition \ref{P2} that $\gamma$ intersects $V_1$. If $\gamma$ does not meet $V_2$ then it is clearly closed, otherwise 
it has to cross it. Let  $\gamma_1$ be the $g^1$ geodesic that contains $\gamma\cap V^1$ and let $t_0<t_1<t_2<t_3$ be such that $\gamma_1([t_1,t_2])\subset V_1$, $\gamma_1(]t_0,t_1[)\cap V_1=\gamma_1(]t_2,t_3[)\cap V_1=\emptyset$ and $\gamma_1'(t_0)$ and $\gamma_1'(t_3)$ are proportional to $K_1$. The restrictions of $\gamma_1$ to $[t_0,t_1]$ and $[t_2,t_3]$ are geodesic segments  of $g^0$. 
Let us see that these two segments are in fact on the same $g^0$-geodesic. We proved that $g^1$ is spacelike Zoll by comparing its geodesics to the $g^0$ one.
The fact that for any $c>0$ we have (compare to \eqref{eq_elliptic} in section \ref{section_elliptic}):
$$
\int_{-c}^c \frac{\sqrt{c^2+1}\,\kappa_1(v)}{(1+v^2)\sqrt{c^2-v^2}}dv=0
$$
says precisely that the $g^0$-geodesic that starts tangentially to $K_1$ from $\gamma_1(t_0)$ arrives tangentially to $K_1$ at the point $\gamma_1(t_3)$, 
proving our claim. It means that seen form $V_2$ the perturbation on $V_1$ as no effect on the spacelike geodesics. In particular, in this case also $\gamma$ is 
closed and therefore $g$ is spacelike Zoll. Moreover, for $i=1,2$, any global Killing field $K$ of $g$ has to be proportional to $K_i$ on $V_i$, therefore if $K$ is 
non trivial it has both lightlike leaves and periodic leaves. But such a behaviour contradicts Proposition \ref{prop_atlases}, therefore $K=0$.

\bigskip
\end{document}